\newtheorem{theorem}{Theorem}
\newcommand{\const}{\mathop{\rm const}\nolimits}
\newcommand{\rev}[1]{{\color{black}#1}}
\date{}
\title{Implicit-Explicit schemes for decoupling multicontinuum problems in porous media}
\author{
Maria Vasilyeva 
\thanks{Department of Mathematics and Statistics, Texas A\&M University  - Corpus Christi,   Corpus Christi, Texas, USA. Email: {\tt maria.vasilyeva@tamucc.edu}.}
}
\begin{document}

\maketitle

\begin{abstract}
In this work, we present an efficient way to decouple the multicontinuum problems. To construct decoupled schemes, we propose Implicit-Explicit time approximation in general form and study them for the fine-scale and coarse-scale space approximations. 
We use a finite-volume method for fine-scale approximation, and the nonlocal multicontinuum (NLMC) method is used to construct an accurate and physically meaningful coarse-scale approximation. The NLMC method is an accurate technique to develop a physically meaningful coarse scale model based on defining the macroscale variables. The multiscale basis functions are constructed in local domains by solving constraint energy minimization problems and projecting the system to the coarse grid. The resulting basis functions have exponential decay properties and lead to the accurate approximation on a coarse grid. We construct a fully Implicit time approximation for semi-discrete systems arising after fine-scale and coarse-scale space approximations. 
We investigate the stability of the two and three-level schemes for fully Implicit and Implicit-Explicit time approximations schemes for multicontinuum problems in fractured porous media. We show that combining the decoupling technique with multiscale approximation leads to developing an accurate and efficient solver for multicontinuum problems. 
\end{abstract}

\section{Introduction}

Multicontinuum models occur in many real-world applications, such as geothermal energy production, unconventional oil and gas production, carbon capture and storage (CCS) in geological formations, deposition of nuclear waste, wastewater treatment, and many more \cite{ruiz2016mixed, hoteit2008efficient, karimi2001numerical, ctene2016algebraic}. 
In reservoir simulation, the multicontimuum models are used to describe complex interaction between multiple scales of heterogeneity such as hydraulic and natural fractures, vugs and cavities, and porous matrix \cite{barenblatt1960basic, warren1963behavior, kazemi1969pressure, pruess1985practical}. In \cite{arbogast1990derivation}, the general form of the dual-continuum model is derived from homogenization theory. 
Multicontinuum systems are characterized by high contrast properties between continua, and each continua may itself have a complex heterogeneous structure. Moreover, fractured porous media usually have complex fracture geometries and minimal thicknesses compared to typical reservoir sizes. A typical model approach for fractured media is based on the lower-dimensional representation of the fracture objects   \cite{martin2005modeling, d2012mixed, formaggia2014reduced, Quarteroni2008coupling, schwenck2015dimensionally}. In such mathematical models, we have a coupled mixed-dimensional multicontinuum system of equations.

Numerical simulation of flow in multicontinuum fracture porous media is a challenging task. Due to the multiple scales and complex geometries of the fractures, the simulation of the processes in fractured porous media requires fine grids for accurate approximation, which is computationally expensive. Moreover, the significant contrast in the continuum's permeability makes numerical simulations even more computationally expensive.
To construct efficient solvers, the multiscale methods are developed to reduce the dimension of the discrete system and perform simulations with fewer degrees of freedom than classical approximation methods such as finite volume, finite element, or finite difference methods \cite{houwu97, eh09, weinan2007heterogeneous, lunati2006multiscale, jenny2005adaptive}.
Several multiscale methods are developed to solve problems in fractured porous media. In \cite{hajibeygi2011hierarchical}, an iterative multiscale finite volume (i-MSFV) method is developed for flow in fractured porous media. The process is based on the Multiscale Finite Volume Method \cite{jenny2003multi, jenny2005adaptive, hajibeygi2008iterative}. 
A multiscale restriction smoothed basis method (F-MsRSB) for multiphase flow in heterogeneous fractured porous media is devised in \cite{shah2016multiscale}. 
In our previous works, we presented the multiscale method for flow in fractured porous media \cite{akkutlu2015multiscale, chung2017coupling, efendiev2015hierarchical, akkutlu2018multiscale}. The proposed approach is based on the Generalized Multiscale Finite Element Method (GMsFEM) and utilizes local spectral problems in multiscale basis construction \cite{efendiev2013generalized, CELV2015, chung2016adaptive}. The GMsFEM is effectively extended to solve problems in multicontinuum fractured porous media  \cite{vasilyeva2019multiscale, spiridonov2020generalized, tyrylgin2020generalized}. 
Recently, the Constraint Energy Minimization Generalized Multiscale Finite Element Method (CEM-GMsFEM) was proposed in \cite{chung2017constraint}. In CEM-GMsFEM, constructing multiscale basis functions starts with an auxiliary multiscale space that is defined by solving local spectral problems. Then, a constraint energy minimization is used to build multiscale basis functions in the oversampling domain. 
In CEM-GMsFEM, the choice of the auxiliary space can be made using careful design of the macroscale parameters. Moreover, the resulting construction can lead to a physically meaningful coarse scale model. In \cite{chung2017non, vasilyeva2019nonlocal}, we recently presented a nonlocal multicontinuum (NLMC) method for problems in fractured media. In the NLMC approximation, the resulting system is very similar to the traditional finite volume method but provides an accurate approximation due to the nonlocal coupling in and between each continuum. 
The NLMC method effectively solves different applied problems in multicontinuum media \cite{vasilyeva2019constrained, vasilyeva2019upscaling}.

To develop an efficient numerical implementation for multicontinuum problems, we construct decoupled (splitted) schemes to separate equations for each continuum. Decoupled discrete schemes can significantly reduce the computational work and allow using different methods and software for the subproblem solution.  
The main idea of the splitting technique is to separate the original problem into several smaller subproblems, which can be designed for efficient computational implementation. In time approximation, additive schemes are a helpful tool for solving unsteady equations \cite{ascher1995implicit, keyes2013multiphysics, vabishchevich2013additive, steefel2018approaches}.
The application of the additive schemes for multiscale approximation has been considered in \cite{efendiev2021temporal,efendiev2021splitting}. The partially-explicit time discretization for nonlinear multiscale problems is presented in \cite{chung2021contrast}. The method was combined with the machine learning techniques applied for the implicit part of the operator \cite{efendiev2022efficient}. 
The next extension of the partially explicit scheme is presented in \cite{leung2022multirate}, where the mutirate partially explicit scheme for multiscale flow problems is developed and analyzed. 
In \cite{vasilyeva2023efficient}, we proposed an efficient decoupled scheme for multicontinuum flow problems in fractured porous media. The presented approach is based on the additive representation of the operator with implicit-explicit approximation by time to decoupled equations for each continuum. We developed, analyzed, and investigated three first-order decoupled schemes for solving the classical multicontinuum problems in fractured porous media on fine grids with finite volume approximation by space. We extend the continuum decoupling approach for multiscale multicontinuum problems with nonlocal multicontinuum approximation on the coarse grid.

In this work, we develop decoupled multiscale schemes for solving multiscale multicontinuum problems in general form. With the help of an implicit-explicit time approximation, we decouple the fine-scale and coarse-scale discrete systems to construct an efficient and robust numerical algorithm. This work is motivated by \cite{vabishchevich2013additive, gaspar2014explicit} where implicit-explicit schemes are utilized to the splitting of time-dependent problems by an additive representation of the problem operator.  
The Implicit-Explicit (ImEx) schemes are widely used for diffusion-reaction or diffusion-convection problems. For the convection-diffusion problems, an explicit scheme is applied for the convection part, and an implicit scheme is utilized for the diffusion part because it gives a more substantial restriction on the time step size in the explicit approximation. In the convection-diffusion operator, the reason to approximate explicitly can be delivered to the non-symmetric part of the matrix or Jacobian for nonlinear systems for a general case.
In reaction-diffusion problems, the reaction part can be approximated explicitly \cite{vabishchevich2012explicit, vasilyeva2023uncoupling}.  
In \cite{southworth2023implicit} an  implicit-explicit scheme is constructed for radiation hydrodynamics, where coupled radiation equations are treated using implicit integration and hydrodynamics is treated by explicit integration.
Moreover, in such applications, ImEx schemes are used to decouple the slow and fast parts of the operator. The stiff part uses the implicit approximation to avoid small-time stepping. Furthermore, the implicit part is typically associated with a  diffusion operator and can be solved effectively by iterative methods. 
In \cite{ascher1995implicit}, linear multistep ImEx schemes are developed for general equations. Such schemes can be constructed for spatially discretized partial differential equations. In \cite{vabishchevich2013additive, samarskii2002additive}, the fundamental aspects of the splitting schemes are discussed for the problems described by time-dependent partial differential equations. The construction of the splitting schemes is based on the additive representation of the operator of the time-dependent problem, where various schemes are developed based on the implicit-explicit approximation. 
Due to the additive representation of an operator of a time-dependent problem as a sum of operators with a simpler structure, the transition to a new time level can be performed as a sequence of subproblems.

Motivated by Implicit-Explicit schemes in \cite{ascher1995implicit} and additive schemes for time-dependent problems in  \cite{vabishchevich2013additive, gaspar2014explicit}, we develop decoupling schemes in general form for multicontinuum problems with fine-scale and coarse-scale approximations by space. The operator is split into the block diagonal and off-block-diagonal parts to apply it to decouple the high-contrast continua.  
In this work, we continue the development of the efficient decoupling schemes proposed in our previous work \cite{vasilyeva2023efficient} and propose a general form of the Implicit-Explicit schemes. 
We note that the first-order schemes are similar to the scheme proposed in \cite{efendiev2021splitting, efendiev2021temporal}. The paper's main novelty is developing a general form of the decoupling scheme for fine-scale and coarse-scale problems. We systematically analyze the stability and computational performance of the proposed scheme for two and three-level schemes. Finally, numerical experiments demonstrate that the proposed implicit-explicit two and three-level schemes give an efficient decoupling approach for multicontinuum problems, reduce the number of iterations and size of the discrete problems, and lead to a highly accurate and computationally efficient algorithm.

The paper is organized as follows. Section 2 describes problem formulation for flow in multicontinuum media in a general form and presents a semi-discretization based on finite-volume method (FVM) and nonlocal multicontinua method (NLMC). The Implicit time approximation that leads to the coupled system of equations is given in Section 3. Section 4 presents the Implicit-Explicit schemes to decouple the multicontinuum system. Stability analysis is provided for two- and three-level Implicit and Implicit-Explicit schemes. Numerical results are given in Section 5. Finally, the conclusion is presented in Section 6.

\section{Problem formulation}
 
The multicontinuum models are widely used in reservoir simulations. For example,  in gas production from shale formation, we have a highly heterogeneous and complex mixture of organic matter, inorganic matter, and multiscale fractures \cite{akkutlu2012multiscale, akkutlu2017multiscale}. Another example is the fractured vuggy reservoirs, where multicontinuum models are used to characterize the complex interaction between vugs, fractures, and porous matrix \cite{wu2011multiple, wu2007triple, yao2010discrete}.
The mathematical model of flow in multicontinuum media is described by a coupled system of equations
\begin{equation}
\label{mm}
c_{\alpha} \frac{ \partial  u_{\alpha}}{\partial t}
- \nabla \cdot  (k_{\alpha}  \nabla u_{\alpha})
+  \sum_{\beta \neq \alpha} \sigma_{\alpha \beta} (u_{\alpha} - u_{\beta})  = f_{\alpha},  \quad 0 < t  \leq T, \quad x \in \Omega_{\alpha},
\end{equation}
where $\Omega_{\alpha}$ is a computational domain, $\alpha, \beta = 1,2,...,L$ and $L$ is the number of continuum. 
Here 
$u_{\alpha} = u_{\alpha}(x, t)$ is the pressure of ${\alpha}$ continuum, 
$c_{\alpha}$ and $k_{\alpha}$ are the problem coefficients, 
$\sigma_{\alpha \beta} = \sigma_{\beta \alpha}$  is the coupling coefficient between continuum $\alpha$ and $\beta$ that characterize a flow between them,  
$f_{\alpha}$ is the source terms.  

The system of equations \eqref{mm} is considered with some given initial conditions and homogeneous Neumann boundary conditions for each continuum
\begin{equation}
\label{mm-bc}
- k_{\alpha}  \nabla u_{\alpha} \cdot n = 0, \quad 0 < t  \leq T, \quad x \in \partial \Omega_{\alpha},
\end{equation}
where $n$ is the outer normal vector to the domain boundary $\partial \Omega_{\alpha}$.

Let $u \in V$ and $V = V_1 \oplus V_2 \oplus ... \oplus V_L$  be a direct sum of spaces $V_{\alpha}$, where $u_{\alpha} \in V_{\alpha}$ and  $V_{\alpha}$ is a Hilbert space.   
Then for $u(t) = (u_1, u_2, ...u_L)^T \in V$, we have the following system of equations
\begin{equation}
\label{mm1}
\mathcal{C} \frac{du}{dt} + \mathcal{A} u = f(t),   \quad 0 < t  \leq T,
\end{equation}
with 
\[
\mathcal{A} = 
\mathcal{D} + \mathcal{Q}, 
\quad
\mathcal{D} =
 \begin{pmatrix}
\mathcal{D}_1 & 0 & ... & 0\\
0 & \mathcal{D}_2 & ... & 0\\
... & ... & ... & ...\\
0 & 0 & ... & \mathcal{D}_L
\end{pmatrix}, 
\quad 
\mathcal{C} = 
\begin{pmatrix}
c_1 & 0 & ... & 0\\
0 & c_2 & ... & 0\\
... & ... & ... & ...\\
0 & 0 & ... & c_L
\end{pmatrix}, 
\]
and 
\[
\mathcal{Q} =  
 \begin{pmatrix}
\sum_{\beta \neq 1} \sigma _{1\beta} & -\sigma_{12} & ... & -\sigma_{1L}\\
-\sigma_{21} & \sum_{\beta \neq 2} \sigma_{2\beta} & ... & -\sigma_{2L}\\
... & ... & ... & ...\\
-\sigma_{L1} & -\sigma_{L2} & ... &  \sum_{\beta \neq L} \sigma_{L\beta}\\
\end{pmatrix}, 
\quad 
f = \begin{pmatrix}
f_1\\
f_2\\ 
...\\
f_L
\end{pmatrix}, 
\]
where $f_{\alpha} \in L^2(0,T;L^2(\Omega_{\alpha}))$ and $\mathcal{D}_{\alpha} u_{\alpha} = - \nabla \cdot (k_{\alpha} \nabla u_{\alpha})$ is the diffusion operator for component $\alpha$.  

Let $(u,v)$ and $(u,v)_{\mathcal{A}} = (\mathcal{A}u, v)$ be the scalar products for $u,v \in V$,  $||u|| = \sqrt{(u,u)}$ and $||u||_{\mathcal{A}} = \sqrt{ (\mathcal{A}u, v)}$ be the norms in $V$ with $(u,v) = \sum_{\alpha=1}^L (u_{\alpha}, v_{\alpha})$, $u_{\alpha} \in L^2(0,T;L^2(\Omega_{\alpha}))$. 
For the system of equations \eqref{mm1} that describe the flow problems in multicontinuum media,  we have the following physical properties
\begin{equation}
\label{mas}
c_{\alpha} \geq c_{0,\alpha} > 0, \quad 
k_{\alpha} \geq k_{0,\alpha} > 0, \quad 
\sigma_{\alpha \beta} = \sigma_{\beta \alpha}  \geq 0,
\end{equation}
with $c_{\alpha}, \sigma_{\alpha \beta}   \in L^{\infty}(\Omega_{\alpha})$ and $k_{\alpha}  \in W^{1, \infty}(\Omega_{\alpha})$. 
Therefore  $\mathcal{A}$ and $\mathcal{C}$  are self-adjoint and positive definite operators.

\subsection{Fine-scale approximation by space (FVM)}
 
In domain $\Omega$, we construct a structured grid $\mathcal{T}_h(\Omega)  = \cup_i \varsigma_i$, where $\varsigma_i$ is the square cell with length $h$ in each direction. 
For the problem \eqref{mm}, we use a finite volume approximation and obtain the following semi-discrete form
\begin{equation}
\label{app}
 c_{\alpha,i} \frac{\partial u^h_{{\alpha}, i} }{\partial t} |\varsigma^{\alpha}_i|
 + \sum_{j}  T_{{\alpha},ij}  (u^h_{{\alpha}, i} - u^h_{{\alpha}, j})
 + \sum_{{\alpha \neq \beta}} 
 \sum_{j} \sigma_{\alpha \beta,ij} (u^h_{{\alpha}, i} - u^h_{\beta, j} )
 =  f_{\alpha,i}   |\varsigma^{\alpha}_i|, \quad \forall i = 1, N^{\alpha}_h,
\end{equation}
where 
$T_{\alpha,ij} = k_{\alpha} |E_{ij}|/d_{ij}$ ($|E_{ij}|$ is the length of facet between cells $\varsigma_i$ and $\varsigma_j$, $d_{ij}$ is the distance between midpoint of cells $\varsigma_i$ and $\varsigma_j$),
$\sigma_{il} = \sigma |E^i_l|/d^i_l$ if $\iota_l \cap \varsigma_i \neq 0$ and zero else ($|E^i_l|$ is the length of continuum interface and $d^i_l$ is the distance between continuum), 
$N^{\alpha}_h$ is the number of cells and $\alpha = 1,2,...,L$. 

Let $u^h = (u^h_1, u^h_2, ...,u^h_L)$ and 
$u^h_{\alpha} = (u^h_{\alpha, 1}, u^h_{\alpha, 2},..., u^h_{\alpha, N^{\alpha}_H})^T$. Then we have the following system of coupled equations  in the matrix form
\begin{equation}
\label{app-mc}
M^h \frac{\partial u^h}{\partial t} + A^h u^h = F^h,
\end{equation}
with
\[
M^h = \begin{pmatrix}
M^h_1 & 0 & ... & 0 \\
0 & M^h_2 & ... &  0 \\
 ... &  ... &  ...  &   ...   \\
0 & 0 &  ... & M^h_L
\end{pmatrix},  \quad
D^h = \begin{pmatrix}
D^h_1 & 0 & ... & 0 \\
0 & D^h_2 & ... &  0 \\
 ... &  ... &  ...  &   ...   \\
0 & 0 &  ... & D^h_L
\end{pmatrix},\quad
F^h = \begin{pmatrix}
F^h_1 \\
F^h_2 \\
 ...   \\
F^h_L
\end{pmatrix},  
\]\[
Q^h = 
\begin{pmatrix}
\sum_{\beta \neq 1} Q^h_{1\beta} & -Q^h_{12} & ...  & -Q^h_{1L} \\
-Q^h_{21} & \sum_{\beta \neq 2} Q^h_{2\beta} & ... & -Q^h_{2L} \\
 ... &  ... &  ...  &   ... &  \\
-Q^h_{L1}   & -Q^h_{L2} & ... & \sum_{\beta \neq L} Q^h_{L \beta}
\end{pmatrix},
\]
and 
\[
A^h = \begin{pmatrix}
A^h_{11} & A^h_{12} & ... & A^h_{1L} \\
A^h_{21} & A^h_{22} & ... & A^h_{2L} \\
 ... &  ... &  ...  &   ...   \\
A^h_{L1} & A^h_{L2} &  ... & A^h_{LL}
\end{pmatrix}
= \begin{pmatrix}
D^h_1 + \sum_{\beta \neq 1} Q^h_{1\beta} & -Q^h_{12} & ...  & -Q^h_{1L} \\
-Q^h_{21} & D^h_2 + \sum_{\beta \neq 2} Q^h_{2\beta} & ... & -Q^h_{2L} \\
 ... &  ... &  ...  &   ... &  \\
-Q^h_{L1}   & -Q^h_{L2} & ... & D^h_L + \sum_{\beta \neq L} Q^h_{L \beta}
\end{pmatrix}
\]
where 
$M^h_{\alpha} = \{m_{\alpha,ij}\}$, 
$D^h_{\alpha} = \{a_{\alpha,ij}\}$, 
$Q^h_{\alpha \beta} = \{q_{\alpha \beta, ij}\}$, 
$F^h_{\alpha} = \{f_{\alpha,j} |\varsigma^{\alpha}_j|\}$
\[
m_{\alpha,ij} =
\left\{\begin{matrix}
c_{\alpha,i} |\varsigma^{\alpha}_i|  & i = j, \\
0 & otherwise
\end{matrix}\right. ,  
\quad
a_{\alpha,ij} =
\left\{\begin{matrix}
\sum_{n \neq i} T_{\alpha, in} & i = j, \\
-T_{\alpha, ij} & otherwise
\end{matrix}\right. ,  
 \quad
q_{\alpha \beta,ij} =
\left\{\begin{matrix}
\sigma_{\alpha \beta,ij} &  \varsigma^{\alpha}_i \cap \varsigma^{\beta}_j \neq 0, \\
0 & otherwise
\end{matrix}\right. .
\]
We note that under assumptions \eqref{mas}, we see that matrices $D^h_{\alpha}$ and $Q^h_{\alpha \beta}$ are symmetric and diagonally dominant with non-negative diagonal entries and therefore are positive semidefinite. Then, the block matrices $A^h$ and $Q^h$ are symmetric and positive semidefinite \cite{golub2013matrix, horn2012matrix}. Moreover, it is well-known that the given finite volume method with two-point flux approximation provides a solution with second-order accuracy by space.

\subsection{Multiscale approximation by space (NLMC)}

A general multiscale finite element approach for multicontinuum upscaling was presented in \cite{chung2017non}. The proposed method is based on the Constrained Energy Minimization Generalized Finite Element Method (CEM-GMsFEM) \cite{chung2017constraint} and is called the nonlocal multicontinua method (NLMC). The MLMC method preserves the physical meaning of the coarse grid approximation by a particular way of defining constraints in the CEM-GMsFEM.
The system obtained using the NLMC method has the same size as a regular finite volume approximation on a coarse grid but provides an accurate approximation with a significant reduction of the discrete system size.


Let $\mathcal{T}_H(\Omega)$ be the coarse grid with cells $K_i$, 
$K^+_i$ be an oversampled region for the coarse cell $K_i$ obtained by enlarging $K_i$ by several coarse cell layers,  
$K_i^{\alpha} =  K_i \cap \Omega_{\alpha}$ and 
$K_i^{\alpha, +} =  K^+_i \cap \Omega_{\alpha}$.
We  construct a set of basis functions $\psi^{i,\alpha} = (\psi^{i,\alpha} _1, \psi^{i,\alpha} _2,...,\psi^{i,\alpha} _L)$ for $\alpha$-continuum in local domain $K_i^+$ with $\psi^{i,\alpha}_{\beta} \in K_i^{\beta,+}$ using the following constrains
\[
\frac{1}{|K^{\beta}_j|}\int_{K^{\beta}_j} \psi^{i,\alpha}_{\beta} dx = \delta_{ij}  \delta_{\alpha \beta},  
\quad \forall K^{\beta}_j \in K^+_i, 
\]
with $\beta = 1,...,L$.
Given constraints provide meaning to the coarse scale solution: the local solution has zero means in another continuum except for the one for which it is formulated.

Then, we solve the following constrained energy minimizing problem in the oversampled local domain ($K_i^+$) using a fine-grid approximation for the coupled system
\begin{equation}
\label{eq:basis}
\begin{pmatrix}
D_1^{K^+_i}+ \sum_{\gamma \neq 1} Q^{K^+_i}_{1\gamma} & ... & -Q^{K^+_i}_{1L} 
& C^T_1 & ... & 0 \\
... & ... & ... & ... & ... & ... \\
-Q^{K^+_i}_{L1} & ...& D^{K^+_i}_L + \sum_{\gamma \neq L} Q^{K^+_i}_{L\gamma} & 0  &  ... &  C^T_L \\
C_1 & ...& 0 & 0  & ...& 0 \\
... & ... & ... & ... & ... & ... \\
0 & ...& C_L &  0 & ...  & 0\\
\end{pmatrix}
\begin{pmatrix}
\psi^{i,\alpha}_1 \\
... \\
\psi^{i,\alpha}_L \\
\mu_1 \\
... \\
\mu_L \\
\end{pmatrix} =
\begin{pmatrix}
0 \\
...\\
0 \\
F^{\alpha,i}_1 \\
...\\
F^{\alpha,i}_L \\
\end{pmatrix}
\end{equation}
with  the zero Dirichlet boundary conditions on $\partial K^+_i$ for $\psi^{i,\alpha}$. 
Here a Lagrange multipliers $\mu_{\beta} = \{\mu^{\beta}_j\}$ are used to impose the constraints with  $F_{\beta}^{\alpha\,i} = \{F^{\alpha,i}_{\beta,j}\}$,  where $F^{\alpha,i}_{\beta,j}$ is  related to the $K^{\beta}_j$ and  $F^{\alpha,i}_{\beta,j} =  \delta_{ij}  \delta_{\alpha \beta}$. 

By combining multiscale basis functions, we obtain the following multiscale space 
\[
V_H = \text{span} \{ \psi^{i,\alpha}=(\psi^{i,\alpha}_1, \psi^{i,\alpha}_2,...,\psi^{i,\alpha}_L), \quad
\alpha = \overline{1,L}, \quad
 i = \overline{1,N_c} \}, 
\]
and form the projection matrix
\[
R = \begin{pmatrix}
R_{11} & R_{12}& ...& R_{1L} \\
R_{21} & R_{22}&  ...& R_{2L} \\
 ...&  ...&  ...&  ... \\
R_{L1} & R_{L2}  &  ...& R_{LL}
\end{pmatrix},
\quad 
R_{\alpha \beta} = \begin{pmatrix}
\psi^{\alpha, 1}_{\beta} \\
\psi^{\alpha, 2}_{\beta} \\
... \\
\psi^{\alpha, N_c}_{\beta} 
\end{pmatrix}.
\]
 
Then, we have the following coarse scale coupled system for $u^H = (u^H_1, u^H_2, ...,u^H_L)$
\begin{equation}
\label{app-nlmc}
M^H \frac{\partial u^H}{\partial t} + A^H u^H  = F^H, 
\end{equation}
where 
\[
D^H = R D^h R^T = \begin{pmatrix}
D^H_{11} & D^H_{12} & ... & D^H_{1L} \\
D^H_{21} & D^H_{22} & ... & D^H_{2L} \\
 ... &  ... &  ...  &   ...   \\
D^H_{L1} & D^H_{L2} &  ... & D^H_{LL}
\end{pmatrix}, \quad 
F^H = \begin{pmatrix}
F^H_1 \\
F^H_2 \\
 ...   \\
F^H_L
\end{pmatrix},  
\]
with $D^H_{\alpha\beta} = \sum_{\gamma} R_{\alpha\gamma} D_{\gamma} R_{\beta\gamma}^T$ and $F^H_{\alpha} = R_{\alpha\alpha} F^h_{\alpha}$.   

Based on the properties of the multiscale basis functions for mass matrices, we have 
\[
M^H = \begin{pmatrix}
M^H_1 & 0 & ... & 0 \\
0 & M^H_2 & ... &  0 \\
 ... &  ... &  ...  &   ...   \\
0 & 0 &  ... & M^H_L
\end{pmatrix},  \quad
Q^H = \begin{pmatrix}
\sum_{\gamma \neq 1} Q^H_{1\gamma} & -Q^H_{12} & ...  & -Q^H_{1L} \\
-Q^H_{21} & \sum_{\gamma \neq 2} Q^H_{2\gamma} & ... & -Q^H_{2L} \\
 ... &  ... &  ...  &   ... &  \\
-Q^H_{L1}   & -Q^H_{L2} & ... & \sum_{\gamma \neq L} Q^H_{L \gamma}
\end{pmatrix},
\]
with $M^H_{\alpha} = R_{\alpha\alpha} M^h_{\alpha} R_{\alpha\alpha}^T$,  $Q^H_{\alpha\beta} = R_{\alpha\alpha} Q^h_{\alpha\beta} R_{\beta\beta}^T$
and 
\[
A^H = \begin{pmatrix}
A^H_{11} & A^H_{12} & ... & A^H_{1L} \\
A^H_{21} & A^H_{22} & ... & A^H_{2L} \\
 ... &  ... &  ...  &   ...   \\
A^H_{L1} & A^H_{L2} &  ... & A^H_{LL}
\end{pmatrix}
= \begin{pmatrix}
D^H_{11} + \sum_{\gamma \neq 1} Q^H_{1\gamma} & D^H_{12} - Q^H_{12} & ...  & D^H_{1L} - Q^H_{1L} \\
D^H_{21} - Q^H_{21} & D^H_{22} + \sum_{\gamma \neq 2} Q^H_{2\gamma} & ... & D^H_{2L} - Q^H_{2L} \\
 ... &  ... &  ...  &   ... &  \\
D^H_{L1} - Q^H_{L1}   & D^H_{L2} - Q^H_{L2} & ... & D^H_{LL} + \sum_{\gamma \neq L} Q^H_{L \gamma}
\end{pmatrix}.
\]

Here for $c_{\alpha}, \sigma_{\alpha \beta}, f_{\alpha} = \const$ in each coarse cell $K_i^{\alpha}$,  the mass matrix, continuum coupling matrix, and right-hand side vector can be directly calculated on the coarse grid and similar to the finite volume approximation on the coarse grid
\[
M^H_{\alpha} = \{{m}_{\alpha,ij}\}, \quad 
Q^H_{\alpha \beta} = \{{q}_{\alpha \beta, ij}\}, \quad 
F^H_{\alpha} = \{{f}_{\alpha,j} |K^{\alpha}_j|\}, \quad 
\]\[
{m}_{\alpha,ij} =
\left\{\begin{matrix}
{c}_{\alpha,i} |K^{\alpha}_i| & i = j, \\
0 & otherwise
\end{matrix}\right. ,  
\quad
q_{\alpha \beta,ij} =
\left\{\begin{matrix}
{\sigma}_{\alpha \beta,ij} &  K^{\alpha}_i \cap K^{\beta}_j \neq 0, \\
0 & otherwise
\end{matrix}\right.
\]
Therefore, we have $M^H=(M^H)^T \geq 0$ and $A^H = (A^H)^T \geq 0$ in the NLMC method.

\section{Implicit approximation by time (coupled scheme)}

We set $u^n = u(t_n)$, where $t_n = n \tau$,  $n=1,2, ...$ and $\tau > 0$ be the uniform time step size. 
To construct time approximations for \eqref{app-mc} and \eqref{app-nlmc} in a general form, we can use the following form of linear multistep scheme
\begin{equation}
\label{mm3}
\frac{1}{\tau} M  \left(\sum_{j=-1}^{s-1} a_j u^{n-j} \right) 
+ A \left(\sum_{j=-1}^{s-1} c_j u^{n-j}\right)  = F, 
\quad s \geq 1
\quad n = 1,2,...
\end{equation}
where $M$ and $A$ are linear operators. 
This system is coupled, and the size of the system is $N_h = \sum_{\alpha} N^{\alpha}_h$ on the fine grid and $N_H = \sum_{\alpha} N^{\alpha}_H$ on the coarse grid.
By Taylor expansion about $t_n$, we can obtain constraints for time approximation schemes of different order \cite{ascher1995implicit}.
Note that the time approximation techniques work with the discrete by-space systems and can be applied to fine and coarse-scale systems.

\subsection{Two-level scheme} 

For two-level  scheme ($s=1$), we have the following constrains $a_{-1} + a_0 = 0$ and $a_{-1} = c_{-1} + c_0$. 
With $a_{-1} = -a_0 = 1$, $c_{-1} = \theta$, $c_0 = (1-\theta)$, we obtain the weighted scheme ($\theta$-scheme)  \cite{samarskii2001theory, vabishchevich2013additive, vabishchevich2012explicit, afanas2013unconditionally, kolesov2014splitting}
\begin{equation}
\label{mm4}
M \frac{u^{n+1} - u^n}{\tau} 
+ A (\theta u^{n+1} + (1-\theta) u^n)  = F,
\end{equation}
where we have the forward and backward Euler schemes for $\theta=0$ and $1$.

\begin{theorem}
\label{t:t1}
The solution of the discrete problem \eqref{mm4}  is stable with $\theta \geq 1/2$ and satisfies the following estimate
\begin{equation}
\label{t1}
||u^{n+1}||_{A}^2 \leq ||u^n||_{A}^2 
+ \frac{\tau}{2} ||F||^2_{M^{-1}}.
\end{equation}
\end{theorem}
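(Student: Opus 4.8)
The plan is to convert the $\theta$-scheme \eqref{mm4} into a discrete energy balance by testing it against the time increment $u^{n+1}-u^n$, to split the stiffness term so that the difference of $A$-norms appears explicitly, and then to absorb the source term by a Cauchy--Schwarz estimate followed by a Young inequality whose constant is tuned to cancel the mass-norm contribution exactly.

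Concretely, I would first take the scalar product of \eqref{mm4} with $v:=u^{n+1}-u^n$. The time term gives $\tfrac1\tau(Mv,v)=\tfrac1\tau||v||_M^2\ge 0$, using that $M=M^{T}>0$ by \eqref{mas} (which also makes $M^{-1}$ and the norm $||\cdot||_{M^{-1}}$ meaningful). For the stiffness term I would use the algebraic identity
\[
\theta u^{n+1}+(1-\theta)u^n=\tfrac12(u^{n+1}+u^n)+\Big(\theta-\tfrac12\Big)(u^{n+1}-u^n)
\]
together with the self-adjointness of $A$, which yields
\[
\big(A(\theta u^{n+1}+(1-\theta)u^n),\,v\big)=\tfrac12\big(||u^{n+1}||_A^2-||u^n||_A^2\big)+\Big(\theta-\tfrac12\Big)||v||_A^2 .
\]
Since $A=A^{T}\ge 0$ (established after \eqref{app-mc} for the FVM system and after \eqref{app-nlmc} for the NLMC system) and $\theta\ge 1/2$, the last summand is nonnegative, so the exact energy identity
\[
\tfrac1\tau||v||_M^2+\tfrac12\big(||u^{n+1}||_A^2-||u^n||_A^2\big)+\Big(\theta-\tfrac12\Big)||v||_A^2=(F,v)
\]
reduces to $\tfrac1\tau||v||_M^2+\tfrac12(||u^{n+1}||_A^2-||u^n||_A^2)\le (F,v)$.

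It then remains to estimate $(F,v)$: I would write $(F,v)=(M^{-1/2}F,M^{1/2}v)\le ||F||_{M^{-1}}||v||_M$ by Cauchy--Schwarz and apply Young's inequality in the form $ab\le \tfrac1\tau a^2+\tfrac{\tau}{4}b^2$ with $a=||v||_M$, $b=||F||_{M^{-1}}$. The crucial observation is that this particular weighting reproduces precisely the term $\tfrac1\tau||v||_M^2$ already present on the left, so after cancellation one obtains $\tfrac12(||u^{n+1}||_A^2-||u^n||_A^2)\le \tfrac{\tau}{4}||F||^2_{M^{-1}}$, and multiplying by $2$ gives \eqref{t1}. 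The computation itself is elementary; the two points that genuinely require care are (i) the choice of the Young constant, which must be made so that the mass-norm terms cancel identically rather than merely being dominated — a looser choice still gives stability but with a constant worse than $\tau/2$ — and (ii) the sign of the discarded term $(\theta-\tfrac12)||v||_A^2$, which is exactly where the hypotheses $\theta\ge 1/2$ and $A\ge 0$ enter. The argument applies verbatim to the fine-scale scheme \eqref{app-mc} and the coarse-scale NLMC scheme \eqref{app-nlmc}, since in both cases $M$ is symmetric positive definite and $A$ is symmetric positive semidefinite.
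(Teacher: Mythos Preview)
Your proof is correct and follows essentially the same route as the paper: both test the scheme against the time increment, use the same algebraic splitting (the paper writes it as $u^n=\tfrac12(u^{n+1}+u^n)-\tfrac12(u^{n+1}-u^n)$ after first moving everything into $(M+\tau\theta A)\tfrac{u^{n+1}-u^n}{\tau}+Au^n=F$, which is equivalent to your identity for $\theta u^{n+1}+(1-\theta)u^n$), and then apply the Cauchy/Young inequality with the constant chosen so that the $M$-norm of the increment cancels exactly. The only cosmetic difference is that the paper multiplies by $(u^{n+1}-u^n)/\tau$ rather than $u^{n+1}-u^n$, which shifts a factor of $\tau$ but leads to the same estimate.
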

\begin{proof}
To find a stability estimate for the two-level  scheme, we write the equation \eqref{mm4} as follows
\[
\left(M + \tau \theta A \right)\frac{u^{n+1} - u^{n}}{\tau} + A u^n = F.
\]
By scalar multiply to $(u^{n+1} - u^{n})/\tau$ with $u^n = (u^{n+1} + u^n)/2 - (u^{n+1} - u^n)/2$, we obtain 
\[
\begin{split}
& \left( (M + \tau \theta A) \frac{u^{n+1} - u^{n}}{\tau}, \frac{u^{n+1} - u^{n}}{\tau} \right) 
 + \left( A u^{n}, \frac{ u^{n+1} - u^{n} }{\tau} \right)\\
&= \left( (M + \tau \theta A) \frac{u^{n+1} - u^{n}}{\tau}, \frac{u^{n+1} - u^{n}}{\tau} \right)  
+ \left( A \frac{u^{n+1} + u^n}{2},   \frac{u^{n+1} - u^{n}}{\tau} \right) 
- \left( A \frac{u^{n+1} - u^n}{2},   \frac{u^{n+1} - u^{n}}{\tau} \right) \\
&= \left( \left(M + \tau \left(\theta - \frac{1}{2} \right) A \right) \frac{u^{n+1} - u^{n}}{\tau}, \frac{u^{n+1} - u^{n}}{\tau} \right)  
+ \left( A \frac{u^{n+1} + u^n}{2},   \frac{u^{n+1} - u^{n}}{\tau} \right) 
= \left(F, \frac{ u^{n+1} - u^{n} }{\tau} \right) 
\end{split}
\]
Here for $A = A^T \geq 0$, we have $( A (u^{n+1} + u^n),  u^{n+1} - u^n) = (A u^{n+1}, u^{n+1}) - (A u^n, u^n)$.

By Cauchy inequality, 
we have
\begin{equation}
\label{csh}
\left(F, \frac{ u^{n+1} - u^{n} }{\tau} \right) 
\leq 
\varepsilon \left\| \frac{ u^{n+1} - u^n}{\tau} \right\|_M^2 
+ \frac{1}{4 \varepsilon} 
\left\| F \right\|_{M^{-1}}^2.
\end{equation}
Therefore the two-level  scheme  is stable with $(\theta - 1/2) \geq 0$ and stability estimate holds.
\end{proof}

\subsection{Three-level scheme} 

For the three-level schemes ($s=2$), we have the following constrains $a_{-1} + a_0 + a_1 = 0$, $a_{-1} - a_1 = c_{-1} + c_0 + c_1$ and $(a_{-1} + a_1)/2 = c_{-1} - c_1$. 
With 
$a_{-1} = \mu$, $a_0 = 1-2 \mu$, $a_1 = \mu-1$, 
$c_{-1}=\mu-1/2+\sigma$, $c_0 = 3/2-\mu-2\sigma$ and $c_1 = \sigma$, we obtain \cite{ascher1995implicit, vabishchevich2013additive, vabishchevich2020explicit}
\begin{equation}
\label{mm5}
\begin{split}
M &\left( \mu \frac{u^{n+1} - u^n}{\tau}  + (1-\mu) \frac{u^{n} - u^{n-1}}{\tau}  \right)\\
&+ A \left( 
\left(\sigma + \mu - \frac{1}{2} \right) u^{n+1} - \left(2 \sigma + \mu - \frac{3}{2} \right) u^n + \sigma u^{n-1}
\right)  
= F.
\end{split}
\end{equation}

\begin{theorem}
\label{t:t2}
The solution of the discrete problem \eqref{mm5}  is stable with $\mu \geq 1/2$ and $\sigma \geq (1-\mu)/2$ and satisfies the following estimate
\begin{equation}
\label{t2}
\frac{1}{4}||u^{n+1} + u^{n}||_{A}^2 + ||u^{n+1} - u^{n}||_{S^{Im}}^2 
\leq 
\frac{1}{4}||u^{n} + u^{n-1}||_{A}^2 + ||u^{n} - u^{n-1}||_{S^{Im}}^2 
+ \frac{\tau}{2} ||F||^2_{M^{-1}}
\end{equation}
with $S^{Im} = \frac{1}{\tau} \left(\mu - \frac{1}{2} \right) M  
+ \left( \sigma + \frac{\mu - 1}{2} \right) A$. 
\end{theorem}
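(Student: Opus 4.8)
The plan is to mirror the proof of Theorem~\ref{t:t1}: first recast the three-level scheme \eqref{mm5} in a symmetric ``canonical'' form, and then take the scalar product with a carefully chosen difference quotient so that the $A$-part and the $S^{Im}$-part each collapse to a telescoping difference of the quantities appearing in \eqref{t2}.

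\textbf{Step 1 (regrouping).} I would rewrite \eqref{mm5} using the three discrete combinations $u^{n+1}-u^{n-1}$, $u^{n+1}-2u^n+u^{n-1}$, and $u^{n+1}+2u^n+u^{n-1}$. Matching coefficients, the $M$-part equals $M\frac{u^{n+1}-u^{n-1}}{2\tau}+\left(\mu-\tfrac{1}{2}\right)\frac{M}{\tau}(u^{n+1}-2u^n+u^{n-1})$, and the $A$-part equals $\frac{A}{4}(u^{n+1}+2u^n+u^{n-1})+\left(\sigma+\tfrac{\mu-1}{2}\right)A(u^{n+1}-2u^n+u^{n-1})+\tau\left(\mu-\tfrac{1}{2}\right)A\,\frac{u^{n+1}-u^{n-1}}{2\tau}$. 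Adding these and recognizing $S^{Im}=\tfrac{1}{\tau}\left(\mu-\tfrac{1}{2}\right)M+\left(\sigma+\tfrac{\mu-1}{2}\right)A$, the scheme becomes
\[
\left(M+\tau\left(\mu-\tfrac{1}{2}\right)A\right)\frac{u^{n+1}-u^{n-1}}{2\tau}+S^{Im}(u^{n+1}-2u^n+u^{n-1})+\frac{A}{4}(u^{n+1}+2u^n+u^{n-1})=F.
\]

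\textbf{Step 2 (energy identity and conditions).} Taking the scalar product with $\frac{u^{n+1}-u^{n-1}}{2\tau}$ and using $A=A^T$ together with $u^{n+1}+2u^n+u^{n-1}=(u^{n+1}+u^n)+(u^n+u^{n-1})$ and $u^{n+1}-u^{n-1}=(u^{n+1}+u^n)-(u^n+u^{n-1})$, the third term gives $\frac{1}{8\tau}(\|u^{n+1}+u^n\|_A^2-\|u^n+u^{n-1}\|_A^2)$; similarly, using $S^{Im}=(S^{Im})^T$ with $u^{n+1}-2u^n+u^{n-1}=(u^{n+1}-u^n)-(u^n-u^{n-1})$ and $u^{n+1}-u^{n-1}=(u^{n+1}-u^n)+(u^n-u^{n-1})$, the second term gives $\frac{1}{2\tau}(\|u^{n+1}-u^n\|_{S^{Im}}^2-\|u^n-u^{n-1}\|_{S^{Im}}^2)$. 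Here the hypotheses enter exactly where needed: $\mu\geq 1/2$ and $\sigma\geq(1-\mu)/2$ make $S^{Im}$ a nonnegative combination of the positive semidefinite operators $M$ and $A$, so $\|\cdot\|_{S^{Im}}$ is a genuine seminorm, and the same conditions give $M+\tau\left(\mu-\tfrac{1}{2}\right)A\geq M$, so the first term is bounded below by $\left\|\frac{u^{n+1}-u^{n-1}}{2\tau}\right\|_M^2$. Estimating the right-hand side analogously to \eqref{csh} by $\varepsilon\left\|\frac{u^{n+1}-u^{n-1}}{2\tau}\right\|_M^2+\frac{1}{4\varepsilon}\|F\|_{M^{-1}}^2$ with $\varepsilon=1$, absorbing the first term into the left-hand side, and multiplying through by $2\tau$ yields \eqref{t2}.

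\textbf{Main obstacle.} The only real difficulty is the algebra in Step~1: one must check that the decomposition of the $A$-part produces precisely the coefficient $\sigma+\tfrac{\mu-1}{2}$ on $u^{n+1}-2u^n+u^{n-1}$ — so that it fuses with the $M$-contribution into exactly the operator $S^{Im}$ from the statement — and precisely the correction $\tau\left(\mu-\tfrac{1}{2}\right)A$ on the central difference quotient. Any miscounting here destroys either the telescoping structure or the alignment between the positivity of $S^{Im}$ and the stated conditions on $\mu$ and $\sigma$; after the regrouping, the argument is a word-for-word analogue of the two-level case.
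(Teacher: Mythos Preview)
Your proof is correct and follows essentially the same energy argument as the paper: both test the scheme against (a constant multiple of) $u^{n+1}-u^{n-1}$ and exploit the telescoping identities for $\|u^{n+1}\pm u^n\|^2$; the only difference is organizational, in that you first regroup \eqref{mm5} into the canonical form with $S^{Im}$ appearing explicitly, whereas the paper splits the scheme into four separate pieces I--IV, computes each inner product individually, and only identifies $S^{Im}$ after recombining. Your regrouping in Step~1 checks out coefficient by coefficient, and the resulting argument is a cleaner presentation of the same proof.
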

\begin{proof}    
Equation \eqref{mm5} can be written in the following way
\[
\begin{split}
M &\left( \mu \frac{u^{n+1} - u^n}{\tau}  + (1-\mu) \frac{u^{n} - u^{n-1}}{\tau}  \right)
+ A \left( 
\left(\sigma + \mu - \frac{1}{2} \right) (u^{n+1} - u^n) - \sigma (u^n - u^{n-1}) + u^n
\right)  \\
& =
M \left( \mu \frac{u^{n+1} - 2 u^n + u^{n-1}}{\tau}  +  \frac{u^{n} - u^{n-1}}{\tau}  \right)
+ A \left(  
\sigma (u^{n+1} - 2u^n + u^{n-1})
+ \left(\mu - \frac{1}{2} \right) (u^{n+1} - u^n)  + u^n
\right)  \\
& =
 \frac{1}{\tau} \underbrace{\left(\mu M  + \tau \sigma A \right) (u^{n+1} - 2 u^n + u^{n-1})}_{\text{I}}
+ \frac{1}{\tau} \underbrace{M (u^n - u^{n-1})}_{\text{II}}
+ \left(\mu - \frac{1}{2} \right) \underbrace{A (u^{n+1} - u^n)}_{\text{III}}  
+ \underbrace{A u^n}_{\text{IV}} 
= F.
\end{split}
\]
Let $w^{n} = u^n - u^{n-1},\quad 2 y^n= u^{n} + u^{n-1}$, then
\[
w^{n+1} - w^n = u^{n+1} - 2 u^n + u^{n-1}, \quad 
w^{n+1} + w^n = u^{n+1} - u^{n-1} = 2 (y^{n+1} - y^n), 
\]\[
w^n = (w^{n+1} + w^n)/2 - (w^{n+1} - w^n)/2, \quad 
w^{n+1} = (w^{n+1} + w^n)/2 + (w^{n+1} - w^n)/2,
\]
and 
\[
u^n = (y^{n+1} + y^n)/2 - (w^{n+1} - w^n)/4.
\]
Next, we multiply to $w^{n+1} + w^n$ (in a scalar way) and obtain
\[
\text{I:} \quad ( ( \mu M  + \tau \sigma A ) (w^{n+1} - w^n), w^{n+1} + w^n) 
= ( ( \mu M  + \tau \sigma A )w^{n+1}, w^{n+1} )  
- ( ( \mu M  + \tau \sigma A ) w^n, w^n),
\]\[
\begin{split}
\text{II:} \quad (M w^n, w^{n+1} + w^n ) 
& = \frac{1}{2} ( M (w^{n+1} + w^n), w^{n+1} + w^n)  
- \frac{1}{2} ( M (w^{n+1} - w^n), w^{n+1} + w^n) \\
& = \frac{1}{2} ( M (w^{n+1} + w^n), w^{n+1} + w^n)  
- \frac{1}{2}(M w^{n+1}, w^{n+1}) + \frac{1}{2}(M w^n, w^n),
\end{split}
\]\[
\begin{split}
\text{III:} \quad (A w^{n+1}, w^{n+1} + w^n ) 
& = \frac{1}{2} ( A (w^{n+1} + w^n), w^{n+1} + w^n)  
+ \frac{1}{2} ( A (w^{n+1} - w^n), w^{n+1} + w^n) \\
& = \frac{1}{2} ( A (w^{n+1} + w^n), w^{n+1} + w^n)  
+ \frac{1}{2} (A w^{n+1}, w^{n+1}) - \frac{1}{2}(A w^n, w^n),
\end{split}
\]\[
\begin{split}
\text{IV:} \quad (A u^n, w^{n+1} + w^n ) 
& = ( A (y^{n+1} + y^n), (y^{n+1} - y^n))  
- \frac{1}{4} ( A (w^{n+1} - w^n), w^{n+1} + w^n) \\
& = (A y^{n+1}, y^{n+1}) - (A y^n, y^n)
- \frac{1}{4}(A w^{n+1}, w^{n+1}) + \frac{1}{4} (A w^n, w^n),
\end{split}
\]
for $A=A^T$ and $M=M^T$.

Then, we have
\[
\begin{split}
 \frac{1}{2} & \left( 
\left( \frac{1}{\tau} M + \left(\mu - \frac{1}{2} \right) A
\right) (w^{n+1} + w^n), w^{n+1} + w^n \right)\\
& + \left( 
\left( \frac{\mu}{\tau} M  + \sigma A - \frac{1}{2 \tau} M  + \left( \frac{\mu}{2} - \frac{1}{4} \right) A - \frac{1}{4} A 
\right) w^{n+1}, w^{n+1} \right)  + (A y^{n+1}, y^{n+1})\\
& -
\left( 
\left( \frac{\mu}{\tau} M  + \sigma A - \frac{1}{2 \tau} M  + \left( \frac{\mu}{2} - \frac{1}{4} \right) A - \frac{1}{4} A 
\right) w^{n}, w^{n} \right)  - (A y^n, y^n)= (F, w^{n+1} + w^n).
\end{split}
\]
Then using Cauchy inequality \eqref{csh}, we have 
\[
\begin{split}
 \frac{1}{2} & 
\left(\mu - \frac{1}{2} \right) \left( A (w^{n+1} + w^n), w^{n+1} + w^n \right)\\
&+ 
\left( 
\left( 
\frac{1}{\tau} \left(\mu - \frac{1}{2} \right) M  
+ \left( \sigma + \frac{\mu - 1}{2} \right) A \right) w^{n+1}, w^{n+1} \right)  
+ (A y^{n+1}, y^{n+1})\\
& -
\left( 
\left(\frac{1}{\tau} \left(\mu - \frac{1}{2} \right) M  
+ \left( \sigma + \frac{\mu - 1}{2} \right) A \right) w^{n}, w^{n} \right)  
- (A y^n, y^n) 
\leq 
\frac{\tau}{2}  \left\| F \right\|_{M^{-1}}^2.
\end{split}
\]
Let $S^{Im} = \frac{1}{\tau} \left(\mu - \frac{1}{2} \right) M  
+ \left( \sigma + \frac{\mu - 1}{2} \right) A$ and for $\mu \geq 1/2$, we have
\[
(S^{Im} w^{n+1}, w^{n+1}) - (A y^{n+1}, y^{n+1})\\
\leq 
(S^{Im} w^{n}, w^{n}) + (A y^n, y^n) 
+
\frac{\tau}{2}  \left\| F \right\|_{M^{-1}}^2.
\]
Then
\[
\begin{split}
(S^{Im} (u^{n+1} - u^n), u^{n+1} - u^n) 
&+ \frac{1}{4}(A (u^{n+1} + u^n), u^{n+1} + u^n)\\
\leq 
&(S^{Im} (u^n - u^{n-1}), u^n - u^{n-1}) 
+ \frac{1}{4}(A (u^{n} + u^{n-1}), u^{n} + u^{n-1}) 
+ \frac{\tau}{2}  \left\| F \right\|_{M^{-1}}^2.
\end{split}
\]
Therefore, the three-level  scheme is stable with $\mu \geq 1/2$ and $\sigma \geq (1-\mu)/2$ and stability estimate holds.
\end{proof}

For example, with $\mu = 1/2$, we obtain
\[
M \frac{u^{n+1} - u^{n-1}}{2 \tau}
+ A \left( 
\sigma u^{n+1} + (1 - 2 \sigma) u^n + \sigma u^{n-1}
\right)  = F,
\]
where with $\sigma=0$ we have LF (Leap Frog). 
A similar three-level scheme with weights was presented in \cite{vabishchevich2013additive} with $\sigma = \sigma_1 = \sigma_2$
\[
M \frac{u^{n+1} - u^{n-1}}{2 \tau}
+ A \left( 
\sigma_1 u^{n+1} + (1 - \sigma_1 - \sigma_2) u^n + \sigma_2 u^{n-1}
\right)  = F,
\]
with stability for $\sigma_1 \geq \sigma_2$ and $\sigma_1 + \sigma_2 > 1/2$.   

For $\mu=1$, we have
\[
M \frac{u^{n+1} - u^n}{\tau} 
+ A \left( 
\left( \sigma + \frac{1}{2} \right) u^{n+1} - \left( 2 \sigma - \frac{1}{2} \right) u^n + \sigma u^{n-1}
\right)  = F,
\]
where with $\sigma=0$ we have CN (Crank-Nicolson).

For $\mu=3/2$, we have
\[
M  \frac{3 u^{n+1} - 4 u^n + u^{n-1}}{2 \tau} 
+ A \left( 
(\sigma + 1) u^{n+1} - 2 \sigma u^n + \sigma u^{n-1}
\right)  = F,
\]
where with $\sigma=0$ we have BDF2 (Backward Differentiation Formula).

Presented implicit approximation leads to the large coupled system of equations with high contrast properties. To separate equations by contrasts, we decouple the system of equations by constructing an Implicit-Explicit time approximation. The special additive representation of the operator with an explicit time approximation of the coupling term will lead us to the solution of the smaller problems associated with each continuum.

\section{Implicit-Explicit scheme (decoupled scheme)}

Let $M$ be a diagonal matrix and $A = A^{(1)} + A^{(2)}$ with $A^{(1)} = \text{diag}(A_{11}, A_{22}, ..., A_{LL})$ and $A^{(2)} = A - A^{(1)}$. 
Such additive representation helps us to separate continuum coupling terms and, by approximating it explicitly, construct decoupled schemes
\begin{equation}
\label{cm4}
\underbrace{
\frac{1}{\tau} M  (\sum_{j=-1}^{s-1} a_j u^{n-j})
+A^{(1)} (\sum_{j=-1}^{s-1} c_j u^{n-j})
}_{\text{Im}} 
+ \underbrace{
A^{(2)} (\sum_{j=0}^{s-1} d_j u^{n-j})
}_{\text{Ex}} 
= F.
\end{equation} 
Note that we will have additional coupling for the non-diagonal matrix $M$ and an additive representation should also be utilized to matrix $M$. The properties of the matrix $M$ depend on the approximation method; for example, in the finite element method, we will have a non-diagonal matrix. This work considers finite volume approximation and the NLMC method, which lead to the diagonal matrix $M$. 

In considered multicontinuum problem  \eqref{app-mc}, we have  (\textit{D-scheme})
\begin{equation}
\label{cmD}
A^{(1)} = \begin{pmatrix}
A_{11} & 0 & ... & 0 \\
0 & A_{22} & ... &  0 \\
 ... &  ... &  ...  &   ...   \\
0 & 0 &  ... & A_{LL}
\end{pmatrix},
\quad 
A^{(2)} = A - A^{(1)} = 
 \begin{pmatrix}
0 & A_{12} & ...  & A_{1L}\\
A_{21} &0 & ... & A_{2L} \\
 ... &  ... &  ...  &   ...   \\
A_{L1}   & A_{L2} & ... & 0
\end{pmatrix}.
\end{equation}
We have the following properties for operators 
\begin{equation}
\label{dc-op}
A = A^{(1)} + A^{(2)}  \geq 0, \quad
A^{(1)} - A^{(2)} \geq 0,
\end{equation}
where the property of operator $A^{(1)}-A^{(2)}$ follows from the diagonally dominance with non-negative diagonal entries \cite{gaspar2014explicit, vasilyeva2023efficient}. 

Furthermore, the additive representation of the operator $A$ with diagonal operator $A^{(1)}$ is similar to the Jacobi iterations \cite{golub2013matrix}. Therefore, the most straightforward improvement of the method can be made by incorporating the available information of the solution of continuum $\alpha$ in computing solution for continuum $\alpha+1$ like done in Gauss-Seidel iterations \cite{vasilyeva2023efficient}. 
We sort the continuum based on their permeability,  $k_1 < k_2 <...<k_L$, and use the calculated continuum solution in the solution of the following equation. 
Therefore, we can construct the following modified schemes.
\begin{itemize}
\item (\textit{L-scheme}): We first calculate continuum with smaller permeability
\begin{equation}
\label{cmL}
A^{(1)} = 
\begin{pmatrix}
A_{11} & 0 & ... & 0 \\
A_{21} & A_{22} & ... &  0 \\
 ... &  ... &  ...  &   ...   \\
A_{L1} & A_{L2} &  ... & A_{LL}
\end{pmatrix}, \quad 
A^{(2)} = A - A^{(1)} = 
\begin{pmatrix}
0 & A_{12} & ... & A_{1L} \\
0 & 0 & ... &  A_{2L} \\
 ... &  ... &  ...  &   ...   \\
0 & 0 &  ... & 0
\end{pmatrix},
\end{equation}
where $\bar{A}_0$ is the lower-triangular matrix and we have a forward substitution. 

\item (\textit{U-scheme}) We first calculate continuum with larger permeability
\begin{equation}
\label{cmU}
A^{(1)} =
\begin{pmatrix}
A_{11} & A_{12} & ... & A_{1L} \\
0 & A_{22} & ... &  A_{2L} \\
 ... &  ... &  ...  &   ...   \\
0 & 0 &  ... & A_{LL}
\end{pmatrix}, \quad 
A^{(2)} =  A - A^{(1)} = 
\begin{pmatrix}
0 & 0 & ... & 0 \\
A_{21} & 0 & ... &  0 \\
 ... &  ... &  ...  &   ...   \\
A_{L1} & A_{L2} &  ... & 0
\end{pmatrix},
\end{equation}
where $\bar{A}_0$ is the upper-triangular matrix and we have a backward substitution. 
\end{itemize}
The resulting system is decoupled, but we utilize information about the calculated solution for the previous continuum without affecting the solution time.

\subsection{Two-level scheme} 

For two-level  scheme ($s=1$), we have the following constrains $a_{-1} + a_0 = 0$ and $a_{-1} = c_{-1} + c_0 = d_0$. 
With 
$a_{-1} = -a_0 = 1$, 
$c_{-1} = \theta$, $c_0 = (1-\theta)$ and $d_0 = 1$, we obtain the weighted  Implicit-Explicit  scheme
\begin{equation}
\label{cm5}
M \frac{u^{n+1} - u^n}{\tau} 
+ A^{(1)} (\theta u^{n+1} + (1-\theta) u^n)  
+ A^{(2)} u^n = F.
\end{equation}

\begin{theorem}
\label{t:t3}
The solution of the discrete problem \eqref{cm5} is stable with 
\[
\left(\theta - \frac{1}{2}\right) A^{(1)} \geq \frac{1}2 A^{(2)}
\]
and satisfies the following estimate
\begin{equation}
\label{t3}
||u^{n+1}||_{A}^2 \leq ||u^{n}||_{A}^2 
+\frac{\tau}{2} ||F||^2_{\rev{ M^{-1}}}.
\end{equation}
\end{theorem}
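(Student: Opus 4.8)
The plan is to adapt the energy estimate from the proof of Theorem~\ref{t:t1}, now keeping track of the extra term contributed by the explicit treatment of $A^{(2)}$. First I would rewrite \eqref{cm5} in incremental form: using $\theta u^{n+1}+(1-\theta)u^n = u^n + \theta(u^{n+1}-u^n)$ together with $A = A^{(1)}+A^{(2)}$, the scheme becomes
\[
\left(M + \tau\theta A^{(1)}\right)\frac{u^{n+1}-u^n}{\tau} + A u^n = F .
\]
The point of this reorganization is that the explicitly evaluated coupling term $A^{(2)}u^n$ is absorbed into the old-layer term $Au^n$, so that only $A^{(1)}$ appears in the coefficient of the increment $(u^{n+1}-u^n)/\tau$.

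Next I would take the scalar product with $(u^{n+1}-u^n)/\tau$ and, in the term $\left(Au^n,(u^{n+1}-u^n)/\tau\right)$, substitute $u^n = \tfrac12(u^{n+1}+u^n) - \tfrac12(u^{n+1}-u^n)$. Since $A=A^{T}\geq 0$ (guaranteed by the FVM and NLMC constructions, cf.\ \eqref{mas} and the remarks following \eqref{app-mc} and \eqref{app-nlmc}), the symmetric half telescopes, $\left(A\,\tfrac12(u^{n+1}+u^n),(u^{n+1}-u^n)/\tau\right) = \tfrac{1}{2\tau}\bigl(\|u^{n+1}\|_{A}^2 - \|u^n\|_{A}^2\bigr)$, exactly as in Theorem~\ref{t:t1}. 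Collecting the quadratic forms in $(u^{n+1}-u^n)/\tau$ and using $A=A^{(1)}+A^{(2)}$ once more, the operator acting on that increment is
\[
M + \tau\theta A^{(1)} - \frac{\tau}{2}A = M + \tau\left(\Bigl(\theta-\frac12\Bigr)A^{(1)} - \frac12 A^{(2)}\right).
\]
Here the hypothesis enters: the assumed inequality $\bigl(\theta-\tfrac12\bigr)A^{(1)}\geq\tfrac12 A^{(2)}$, understood for the associated quadratic forms (so that the non-symmetry of the splitting in the L- and U-schemes \eqref{cmL}--\eqref{cmU} is harmless, since only the symmetric parts contribute), yields that this operator is bounded below by $M$.

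Finally I would bound the right-hand side with the Cauchy inequality \eqref{csh} using $\varepsilon=1$; the $\|(u^{n+1}-u^n)/\tau\|_M^2$ term it produces is absorbed by the quadratic form just shown to dominate $\|\cdot\|_M^2$, leaving $\tfrac{1}{2\tau}\bigl(\|u^{n+1}\|_{A}^2 - \|u^n\|_{A}^2\bigr)\leq\tfrac14\|F\|_{M^{-1}}^2$, which multiplied by $2\tau$ is \eqref{t3}. I expect the work here to be bookkeeping rather than a real difficulty. The only point that needs care is that the coupling term must be handled through the common \emph{symmetric} operator $A$, so that symmetry of the full matrix — not of the individual pieces $A^{(1)},A^{(2)}$ — is what drives the telescoping, and that the stated operator inequality is calibrated precisely so that the increment terms drop out after the $\varepsilon=1$ Cauchy step. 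It is also worth noting that for the D-scheme \eqref{cmD} with $\theta=1$ the hypothesis reduces to $A^{(1)}-A^{(2)}\geq 0$, i.e.\ property \eqref{dc-op}, so that scheme is unconditionally stable in $\tau$.
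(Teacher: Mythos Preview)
Your proposal is correct and follows essentially the same approach as the paper: rewrite \eqref{cm5} in incremental form $(M+\tau\theta A^{(1)})(u^{n+1}-u^n)/\tau + Au^n = F$, test against the increment, telescope via $u^n=\tfrac12(u^{n+1}+u^n)-\tfrac12(u^{n+1}-u^n)$, and apply the Cauchy inequality \eqref{csh} with $\varepsilon=1$ so that the hypothesis $(\theta-\tfrac12)A^{(1)}\geq\tfrac12 A^{(2)}$ exactly absorbs the leftover quadratic form. Your additional remarks on the symmetric-part interpretation for the L/U splittings and on the D-scheme at $\theta=1$ recovering \eqref{dc-op} are not in the paper's proof but are correct and helpful.
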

\begin{proof}
 To find stability estimate for the two-level  scheme, we write the equation \eqref{mm4} as follows
\[
\left(M + \tau \theta A^{(1)} \right)\frac{u^{n+1} - u^{n}}{\tau} + A u^n = F.
\]
Similarly to the Theorem \ref{t:t1} for $A = A^T \geq 0$, we have
\[
\begin{split}
& \left( (M + \tau \theta A^{(1)}) \frac{u^{n+1} - u^{n}}{\tau}, \frac{u^{n+1} - u^{n}}{\tau} \right) 
 + \left( A u^{n}, \frac{ u^{n+1} - u^{n} }{\tau} \right)\\
&= \left( \left(M + \tau \left(\theta A^{(1)} - \frac{1}{2} A \right) \right) \frac{u^{n+1} - u^{n}}{\tau}, \frac{u^{n+1} - u^{n}}{\tau} \right)  
+ \frac{1}{2 \tau} ||u^{n+1}||_{A}^2
- \frac{1}{2 \tau} ||u^{n}||_{A}^2
= \left(F, \frac{ u^{n+1} - u^{n} }{\tau} \right) 
\end{split}
\]
Finally, by Cauchy inequality \eqref{csh} and for $\left( \theta A^{(1)} - \frac{1}{2} A \right) \geq 0$, the two-level  scheme  is stable and stability estimate holds.
\end{proof}

We note that the scheme is similar to the one presented in \cite{vabishchevich2020explicit}. However, in this paper we present a general way of the implicit-explicit scheme construction as a multistep implicit-explicit scheme presented in \cite{ascher1995implicit}.

\subsection{Three-level scheme} 

For the three-level  schemes ($s=2$), we have 
$a_{-1} + a_0 + a_1 = 0$, 
$a_{-1} - a_1 = c_{-1} + c_0 + c_1 = = d_0 + d_1$ and 
$(a_{-1} + a_1)/2 =  c_{-1} - c_1 = -d_1$. 
With 
$a_{-1} = \mu$, $a_0 = 1-2 \mu$, $a_1 = \mu-1$, 
$c_{-1}=\mu-1/2+\sigma$, $c_0 = 3/2-\mu-2\sigma$, $c_1 = \sigma$, 
$d_0 = 1/2+\mu$, $d_1 = 1/2-\mu$, 
we obtain
\begin{equation}
\label{cm6}
\begin{split}
M & \left( \mu \frac{u^{n+1} - u^n}{\tau}  + (1-\mu) \frac{u^{n} - u^{n-1}}{\tau}  \right)\\
&+ A^{(1)} \left( 
\left(\sigma + \mu - \frac{1}{2} \right) u^{n+1} - \left(2 \sigma + \mu - \frac{3}{2} \right) u^n + \sigma u^{n-1}
\right) \\
&+ A^{(2)} \left( 
\left(\mu + \frac{1}{2} \right) u^n -  \left(\mu - \frac{1}{2} \right) u^{n-1}
\right)
= F.
\end{split}
\end{equation}

\begin{theorem}
\label{t:t4}
The solution of the discrete problem \eqref{cm6}  is stable with 
\[
\mu \geq 1/2, \quad 
\left( \sigma + \frac{\mu - 1}{2} \right) A^{(1)} \geq \frac{\mu}{2} A^{(2)}
\]
and satisfies the following estimate
\begin{equation}
\label{t4}
\frac{1}{4}||u^{n+1} + u^{n}||_{A}^2 + ||u^{n+1} - u^{n}||_{S^{ImEx}}^2 
\leq 
\frac{1}{4}||u^{n} + u^{n-1}||_{A}^2 + ||u^{n} - u^{n-1}||_{S^{ImEx}}^2 
+ \frac{\tau}{2} ||F||^2_{M^{-1}}
\end{equation}
with $S^{ImEx} = \frac{1}{\tau} \left(\mu - \frac{1}{2} \right) M  
+ \left( \sigma + \frac{\mu - 1}{2} \right) A^{(1)} - \frac{\mu}{2} A^{(2)}$. 
\end{theorem}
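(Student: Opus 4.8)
The plan is to mimic closely the two steps that already appeared in the proofs of Theorems \ref{t:t2} and \ref{t:t3}: first rewrite \eqref{cm6} in an ``incremental'' form in which the second difference $u^{n+1}-2u^n+u^{n-1}$, the first difference $u^n-u^{n-1}$, and the lagged term $A u^n$ appear separately, and then test against $w^{n+1}+w^n$ where $w^n=u^n-u^{n-1}$. Concretely, I would first regroup the $A^{(1)}$ and $A^{(2)}$ contributions. The $A^{(1)}$-part is identical to term I/III/IV of the proof of Theorem \ref{t:t2}, producing $\tfrac{1}{\tau}(\mu M+\tau\sigma A^{(1)})(w^{n+1}-w^n)+(\mu-\tfrac12)A^{(1)}(u^{n+1}-u^n)+A^{(1)}u^n$ after the algebra $(\sigma+\mu-\tfrac12)u^{n+1}-(2\sigma+\mu-\tfrac32)u^n+\sigma u^{n-1}=\sigma(w^{n+1}-w^n)+(\mu-\tfrac12)w^{n+1}+u^n$. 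For $A^{(2)}$, I would rewrite $(\mu+\tfrac12)u^n-(\mu-\tfrac12)u^{n-1}=u^n+(\mu-\tfrac12)w^n$, so the explicit block contributes $A^{(2)}u^n+(\mu-\tfrac12)A^{(2)}w^n$. Adding the two gives $A^{(1)}u^n+A^{(2)}u^n=Au^n$, which is the key cancellation that lets the ``$Au^n$'' term be handled exactly as term IV in Theorem \ref{t:t2} (i.e. it produces $\tfrac14\|u^{n+1}+u^n\|_A^2-\tfrac14\|u^n+u^{n-1}\|_A^2$ minus a $w$-correction), while the leftover pieces are $(\mu-\tfrac12)A^{(1)}w^{n+1}$-type terms and a new $(\mu-\tfrac12)A^{(2)}w^n$ term coming from the explicit discretization.

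Next I would take the scalar product with $w^{n+1}+w^n$. The $M$- and $A^{(1)}$-terms reproduce verbatim (with $A$ replaced by $A^{(1)}$ in the relevant spots) the identities I--IV already displayed in the proof of Theorem \ref{t:t2}, yielding a telescoping combination governed by the operator $\tfrac1\tau(\mu-\tfrac12)M+(\sigma+\tfrac{\mu-1}{2})A^{(1)}$. The genuinely new contribution is the explicit cross term $(\mu-\tfrac12)(A^{(2)}w^n,\,w^{n+1}+w^n)$. Using $A^{(2)}=(A^{(2)})^T$ (which holds because $A=A^{(1)}+A^{(2)}$ and $A^{(1)}=\mathrm{diag}(A_{11},\dots,A_{LL})$ are both symmetric) and the splitting $w^n=\tfrac12(w^{n+1}+w^n)-\tfrac12(w^{n+1}-w^n)$, this term becomes $\tfrac12(\mu-\tfrac12)(A^{(2)}(w^{n+1}+w^n),w^{n+1}+w^n)-\tfrac12(\mu-\tfrac12)\big((A^{(2)}w^{n+1},w^{n+1})-(A^{(2)}w^n,w^n)\big)$. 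Collecting all the telescoped $w^{n+1}$ vs.\ $w^n$ pieces, the quadratic form that multiplies $w^{n+1}$ (and, with opposite sign, $w^n$) is exactly $S^{ImEx}=\tfrac1\tau(\mu-\tfrac12)M+(\sigma+\tfrac{\mu-1}{2})A^{(1)}-\tfrac{\mu}{2}A^{(2)}$, while the non-telescoping ``squared'' terms multiplying $w^{n+1}+w^n$ combine into $\tfrac12(\mu-\tfrac12)\big((A^{(1)}-A^{(2)})(w^{n+1}+w^n),w^{n+1}+w^n\big)$, which is $\ge 0$ by the property \eqref{dc-op} whenever $\mu\ge\tfrac12$.

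Finally, I would apply the Cauchy inequality \eqref{csh} to the right-hand side $(F,w^{n+1}+w^n)=\tau(F,(u^{n+1}-u^n)/\tau\cdot\ldots)$ — more precisely writing $(F,w^{n+1}+w^n)=2\tau(F,(y^{n+1}-y^n)/\tau)$ with $2y^n=u^n+u^{n-1}$, absorbing the $\varepsilon\|\cdot\|_M^2$ term (this is where $\mu\ge 1/2$ and the resulting nonnegativity of the $w^{n+1}+w^n$ quadratic form are again used to keep the left side a genuine ``energy''), and dropping the nonnegative term $\tfrac12(\mu-\tfrac12)\|(A^{(1)}-A^{(2)})^{1/2}(w^{n+1}+w^n)\|^2$. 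What remains is precisely the asserted inequality \eqref{t4} provided $S^{ImEx}\ge 0$, i.e.\ $(\sigma+\tfrac{\mu-1}{2})A^{(1)}\ge\tfrac{\mu}{2}A^{(2)}$ (together with $\mu\ge\tfrac12$ so the $M$-part of $S^{ImEx}$ is nonnegative). I expect the main obstacle to be purely bookkeeping: correctly tracking which pieces telescope into $S^{ImEx}$ and which remain as the $(A^{(1)}-A^{(2)})$ form, in particular making sure the explicit-term contribution $-\tfrac{\mu}{2}A^{(2)}$ lands in $S^{ImEx}$ with the right coefficient — the one subtlety is that the $-\tfrac14 A^{(2)}w^{n+1}$ coming from the ``$Au^n$'' expansion of term IV and the $-\tfrac12(\mu-\tfrac12)A^{(2)}w^{n+1}$ from the explicit cross term must add to exactly $-\tfrac{\mu}{2}A^{(2)}w^{n+1}$, which is where the choice $d_1=1/2-\mu$ is pinned down.
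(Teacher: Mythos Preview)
Your approach is essentially identical to the paper's: both rewrite \eqref{cm6} into the four-piece form $\tfrac{1}{\tau}A_{\text{I}}(w^{n+1}-w^n)+\tfrac{1}{\tau}A_{\text{II}}w^n+(\mu-\tfrac12)A_{\text{III}}w^{n+1}+A_{\text{IV}}u^n=F$ with $A_{\text{I}}=\mu M+\tau\sigma A^{(1)}$, $A_{\text{II}}=M+\tau(\mu-\tfrac12)A^{(2)}$, $A_{\text{III}}=A^{(1)}$, $A_{\text{IV}}=A$, test against $w^{n+1}+w^n$, and collect the telescoping pieces into $S^{ImEx}$. One bookkeeping slip: the non-telescoping ``squared'' coefficient on $(w^{n+1}+w^n)$ is $\tfrac{1}{2\tau}M+\tfrac12(\mu-\tfrac12)(A^{(1)}+A^{(2)})=\tfrac{1}{2\tau}M+\tfrac12(\mu-\tfrac12)A$, not $\tfrac12(\mu-\tfrac12)(A^{(1)}-A^{(2)})$ --- your own expansion of the cross term already gives $+\tfrac12(\mu-\tfrac12)A^{(2)}$, not minus. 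This does not break the argument (since $A\ge 0$ under $\mu\ge\tfrac12$), but it means the appeal to property \eqref{dc-op} is unnecessary here; the paper simply uses $A\ge 0$.
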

\begin{proof}
Equation \eqref{cm6} can be written in the following way
\[
\frac{1}{\tau}  
\underbrace{A_{\text{I}} (u^{n+1} - 2 u^n + u^{n-1})}_{\text{I}}
+ \frac{1}{\tau}  
\underbrace{A_{\text{II}} (u^n - u^{n-1})}_{\text{II}}
+ \left(\mu - \frac{1}{2} \right) 
\underbrace{ A_{\text{III}}(u^{n+1} - u^n)}_{\text{III}}  
+ \underbrace{ A_{\text{IV}} u^n}_{\text{IV}} 
= F.
\]
with 
\[
A_{\text{I}} =  \left( \mu M  + \tau \sigma A^{(1)} \right),  \quad 
A_{\text{II}} = \left( M + \tau  \left(\mu -\frac{1}{2} \right) A^{(2)}\right), \quad 
A_{\text{III}} = A^{(1)}, \quad 
A_{\text{IV}} = A^{(1)} + A^{(2)} = A.
\]
Next, we do a scalar multiply to $w^{n+1} + w^n$ ($w^{n} = u^n - u^{n-1},\quad 2 y^n= u^{n} + u^{n-1}$) and obtain
\[
\begin{split}
&\text{I:} \quad ( A_{\text{I}} (w^{n+1} - w^n), w^{n+1} + w^n) 
= (A_{\text{I}} w^{n+1}, w^{n+1} )  
- (A_{\text{I}} w^n, w^n),  
\\
&\text{II:} \quad (A_{\text{II}} w^n, w^{n+1} + w^n ) 
 = \frac{1}{2} ( A_{\text{II}} (w^{n+1} + w^n), w^{n+1} + w^n)  
- \frac{1}{2}(A_{\text{II}} w^{n+1}, w^{n+1}) 
+ \frac{1}{2}(A_{\text{II}} w^n, w^n),
\\
&\text{III:} \quad (A_{\text{III}} w^{n+1}, w^{n+1} + w^n ) 
 = \frac{1}{2} (A_{\text{III}} (w^{n+1} + w^n), w^{n+1} + w^n)  
+ \frac{1}{2} (A_{\text{III}} w^{n+1}, w^{n+1})
- \frac{1}{2}(A_{\text{III}} w^n, w^n),
\\
&\text{IV:} \quad (A_{\text{IV}}  u^n, w^{n+1} + w^n ) 
 = (A_{\text{IV}}  y^{n+1}, y^{n+1}) 
 - (A_{\text{IV}} y^n, y^n)
- \frac{1}{4}(A_{\text{IV}} w^{n+1}, w^{n+1})
+ \frac{1}{4} (A_{\text{IV}} w^n, w^n).
\end{split}
\]
Then, we have
\[
\begin{split}
 \frac{1}{2} & \left( 
\left( 
\frac{1}{\tau} A_{\text{II}} + \left(\mu - \frac{1}{2} \right) A_{\text{III}} 
\right) (w^{n+1} + w^n), w^{n+1} + w^n \right)\\
& + \left( 
\left( \frac{1}{\tau}  A_{\text{I}} - \frac{1}{2 \tau} A_{\text{II}}  
+ \left( \frac{\mu}{2} - \frac{1}{4} \right) A_{\text{III}} 
- \frac{1}{4} A_{\text{IV}}
\right) w^{n+1}, w^{n+1} \right)  
+ (A_{\text{IV}} y^{n+1}, y^{n+1})\\
& -
\left( 
\left( \frac{1}{\tau}  A_{\text{I}}  - \frac{1}{2 \tau} A_{\text{II}}  
+ \left( \frac{\mu}{2} - \frac{1}{4} \right) A_{\text{III}} 
- \frac{1}{4} A_{\text{IV}}
\right) w^{n}, w^{n} \right)  - (A_{\text{IV}} y^n, y^n)
= (F, w^{n+1} + w^n)
\end{split}
\]
Here
\[
\frac{1}{\tau} A_{\text{II}} + \left(\mu - \frac{1}{2} \right) A_{\text{III}}  
= \frac{1}{\tau} \left( M + \tau  \left(\mu - \frac{1}{2} \right) A^{(2)} \right)
+ \left(\mu - \frac{1}{2} \right)  A^{(1)}
= \frac{1}{\tau} M + \left(\mu - \frac{1}{2} \right) A,
\]
and 
\[\begin{split}
S^{ImEx} &= 
\frac{1}{\tau}  A_{\text{I}}  - \frac{1}{2 \tau} A_{\text{II}}  
+ \left( \frac{\mu}{2} - \frac{1}{4} \right) A_{\text{III}} 
- \frac{1}{4} A_{\text{IV}} 
\\
&= \frac{1}{\tau}  \left( \mu M  + \tau \sigma A^{(1)} \right)
 - \frac{1}{2 \tau}  \left( M + \tau \left(\mu - \frac{1}{2} \right) A^{(2)} \right)
+ \left( \frac{\mu}{2} - \frac{1}{4} \right) A^{(1)} 
- \frac{1}{4} A
\\
&= \frac{1}{\tau}  \left( \mu - \frac{1}{2} \right) M
+ \left( \sigma + \frac{\mu-1}{2} \right) A^{(1)} 
- \frac{\mu}{2} A^{(2)},  
\end{split}
\]
then using inequality \eqref{csh} we obtain a stable scheme for $\mu \geq 1/2$ and 
$\left( \sigma + \frac{\mu - 1}{2} \right) A^{(1)} \geq \frac{\mu}{2} A^{(2)}$.
\end{proof}   

For the particular choice of the parameter $\mu \geq 1/2$, we can obtain the following schemes:
\begin{itemize}
\item 
$\mu = 1/2$:
\[
M \frac{u^{n+1} - u^{n-1}}{2 \tau}
+ A^{(1)} \left( 
\sigma u^{n+1} + (1 - 2 \sigma) u^n + \sigma u^{n-1}
\right) + A^{(2)} u^n = F,
\]
where with $\sigma=0$ we have LF (Leap Frog), and with $\sigma = \frac{1}{2}$ we have CN (Crank-Nicolson) for the implicit part and LF for the explicit part (CNLF) \cite{ascher1995implicit}. 
A similar three-level scheme with weights was presented in \cite{vabishchevich2020explicit} that is stable with $(4 \sigma -1) A^{(1)} \geq A^{(2)}$.

\item $\mu=1$:
\[
M \frac{u^{n+1} - u^n}{\tau} 
+ A^{(1)} \left( 
\left( \sigma + \frac{1}{2} \right) u^{n+1} - \left( 2 \sigma - \frac{1}{2} \right) u^n + \sigma u^{n-1}
\right) 
+ A^{(2)} \left( \frac{3}{2} u^n - \frac{1}{2} u^{n-1} \right)= F,
\]
where with $\sigma=0$, we have CN for the implicit part and AB (Adams-Bashforth) for the explicit part (CNAB) \cite{ascher1995implicit}.

\item $\mu=3/2$:
\[
M  \frac{3 u^{n+1} - 4 u^n + u^{n-1}}{2 \tau} 
+ A^{(1)} \left( 
(\sigma + 1) u^{n+1} - 2 \sigma u^n + \sigma u^{n-1}
\right) 
+ A^{(2)} (2 u^n - u^{n-1})= F,
\]
where with $\sigma=0$ we have BDF2 for implicit part (semi-implicit BDF, SBDF) \cite{ascher1995implicit, vabishchevich2020explicit}.
\end{itemize}

\section{Numerical results}

We investigate two test problems with two and three-continuum in fractured porous media, where one of the continuums represents hydraulic fracture. Let $\gamma \subset \mathbb{R}^{d-1}$ be the lower dimensional domain for fractures and $\Omega \in \mathbb{R}^{d}$ be the domain for a porous matrix with $d = 2$. 
To model fractured porous media, we use a mixed dimensional formulation \cite{martin2005modeling, d2012mixed, formaggia2014reduced, Quarteroni2008coupling, vasilyeva2019nonlocal}. Let $u_{\alpha}$ be the $\alpha$-continuum pressure defined in domain $\Omega$ and $u_f$ be the fracture continuum pressure in lower-dimensional domain $\gamma$.

We study two cases:
\begin{itemize}
\item Two-continuum media (\textit{2C}). 

We consider porous matrix continuum ($u_m \in \Omega$) and fracture continuum ($u_f \in \gamma$)  
\[
\begin{split}
& c_m \frac{ \partial u_m }{\partial t} 
- \nabla \cdot (k_m \nabla u_m) +  \sigma_{mf} (u_m - u_f) =   f_m, 
\quad  x \in \Omega, \\
& c_f \frac{ \partial u_f }{\partial t} 
- \nabla \cdot (k_f \nabla u_f) +  \sigma_{mf} (u_f - u_m)=   f_f.
\quad  x \in \gamma.
\end{split}
\]
where we set $c_f = 1$ and $k_f = 10^6$ for fracture continuum,  $c_m = 0.1$ and $k_m = 1$  for porous matrix continuum.  

\item Three-continuum media (\textit{3C}). 

We have porous matrix continuum ($u_1 \in \Omega$), natural fracture continuum ($u_2 \in \Omega$), and hydraulic fracture continuum  ($u_f \in \gamma$)  
\[
\begin{split}
& c_{1} \frac{ \partial u_1}{\partial t}
- \nabla \cdot ( k_{1}  \nabla u_{1})
+   \sigma_{1 2} (u_1 - u_2)
+   \sigma_{1 f} (u_1 - u_f) = f_1, \quad
x \in \Omega, \\
& c_{2} \frac{ \partial u_2}{\partial t}
- \nabla \cdot (k_{2}  \nabla u_{2})
+   \sigma_{21} (u_2 - u_1)
+   \sigma_{2f} (u_2 - u_f)  = f_2, \quad
x \in \Omega, \\
& c_{f} \frac{ \partial  u_f}{\partial t}
- \nabla \cdot (k_{f}  \nabla u_{f})
+   \sigma_{f1} (u_f - u_1)
+   \sigma_{f2} (u_f - u_2)  = f_f, \quad
x \in \gamma,
\end{split}
\]
where we set $c_f = 1$ and $k_f = 10^6$ for hydraulic fracture continuum,  $c_2 = 0.1$ and $k_2 = 1$ for natural fracture continuum,  and  $c_1 = 0.05$ and $k_1 = 10^{-3}$ for porous matrix continuum.  
\end{itemize}

The problem is considered in domain $\Omega = [0,2] \times [0,1]$ with 10 fracture lines $\gamma_l$, $\gamma = \cup_{l=1}^{10} \gamma_l$ (Figure \ref{fig:geom}). 
For domain $\Omega$, the fine grid is $400 \times 200$ grid with $N_h = 80,000$ square cells ($h = 1/200$) and the coarse grid is $40 \times 20$ grid with $N_H = 800$ square cells ($H = 1/20$). 
For the lower-dimensional fracture domain $\gamma$, the fine grid contains $1,474$ cells, and the coarse grid contains $156$ cells. 
Lower-dimensional fractures are considered an overlaying continuum and approximated using embedded fracture model (EFM) \cite{hkj12, ctene2016algebraic, tene2016multiscale}. The fine-scale approach is accurate for a sufficiently fine grid. We construct accurate multiscale basis functions in the NLMC method for coarse-scale approximation to incorporate complex flow behavior on a coarse grid model.

\begin{figure}[h!]
\centering
\includegraphics[width=0.6 \textwidth]{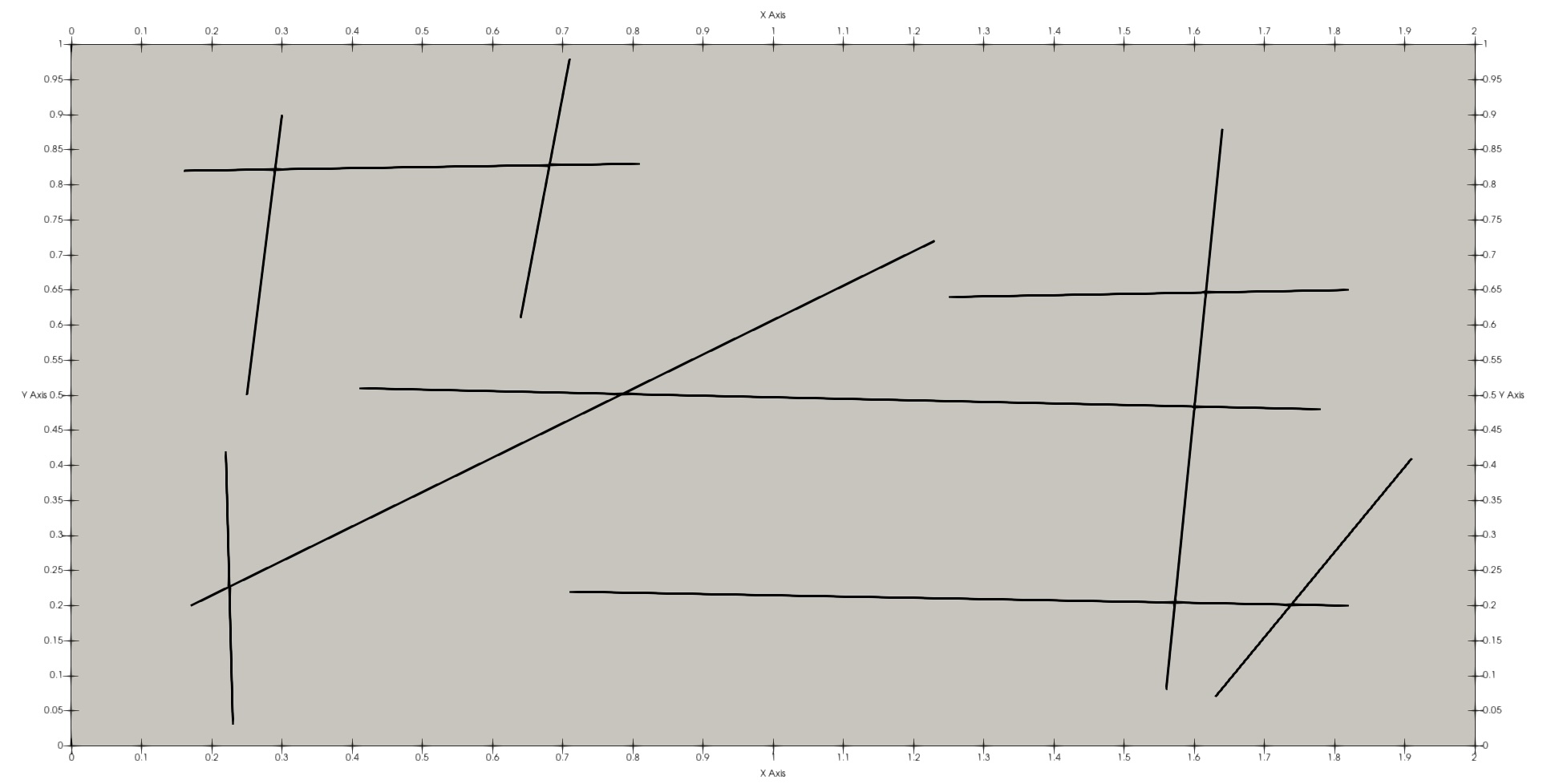}
\caption{Computational domain $\Omega = [0,2] \times [0,1]$ with 10 fractures $\gamma_l$, $\gamma = \cup_{l=1}^{10} \gamma_l$}
\label{fig:geom}
\end{figure}

We set source term in fractures located in the coarse fracture cell related to the domain $[1.85, 1.9] \times [0.35, 0.4]$).  We set $f_m=0$, $f_1=f_2=0$ and $f_f(x) = q_w(x) (u_f - u_w)$ for $x \in \gamma$ with $u_w = 1.2$ and $q_w = 10^5$ and zero otherwise.  
As initial condition, we set $u_{\alpha,0} = 1$ ($\alpha = m, 1,2,f$) and perform simulations with $T_{max} = 0.005$. 

\begin{figure}[h!]
\centering
\includegraphics[width=0.32 \textwidth]{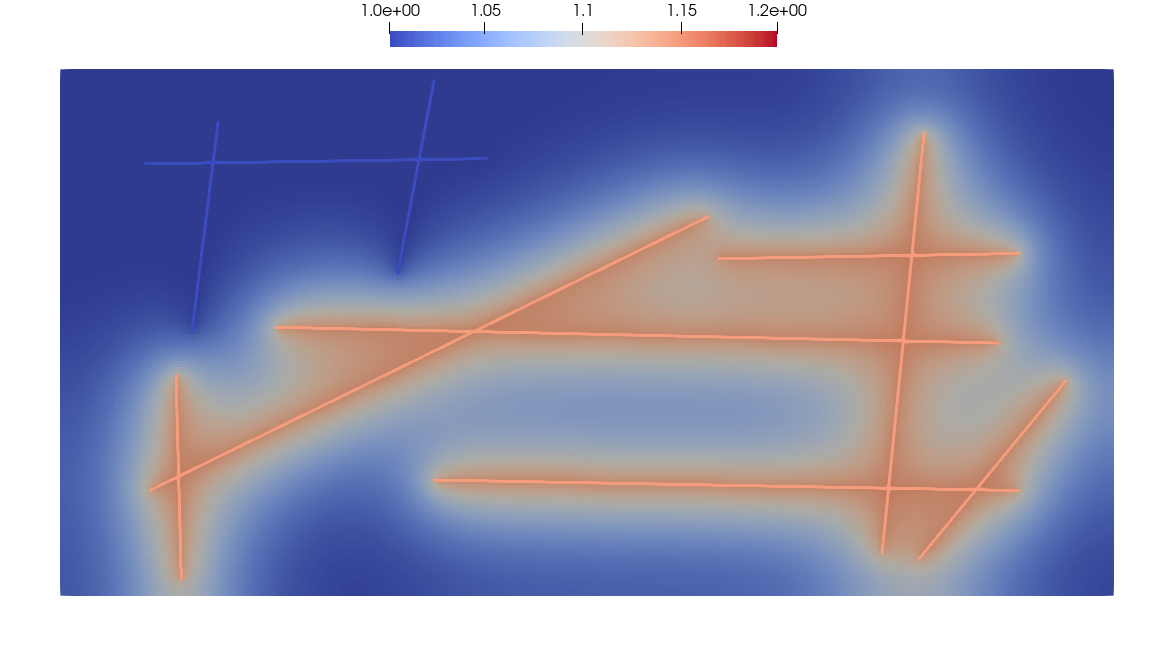}
\includegraphics[width=0.32 \textwidth]{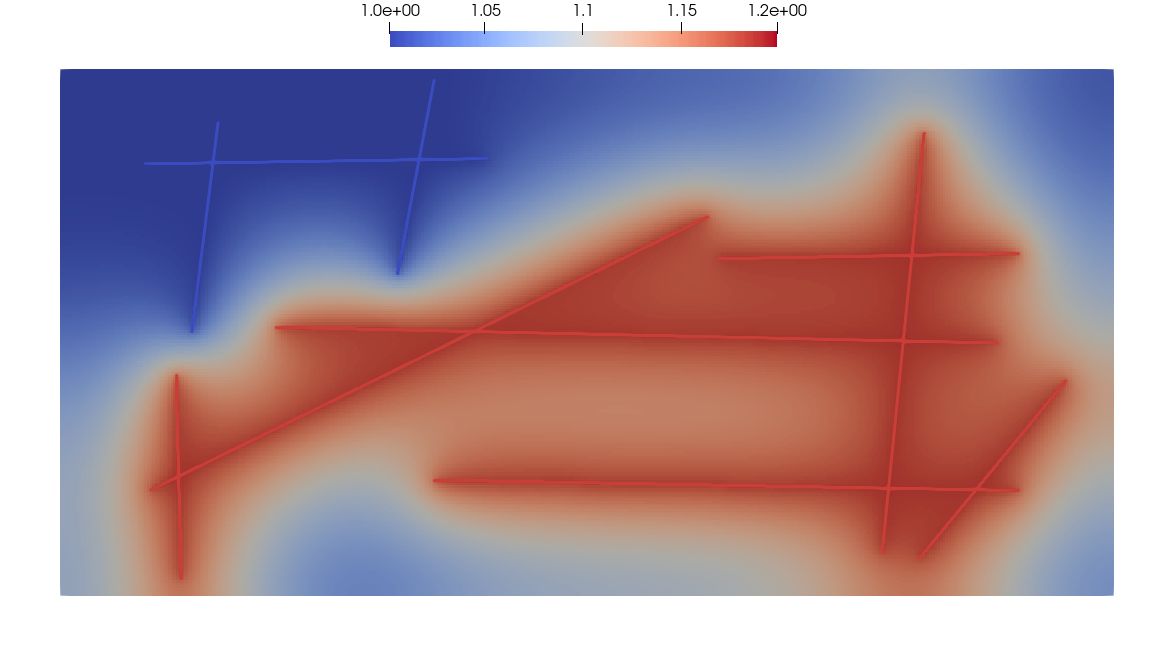}
\includegraphics[width=0.32 \textwidth]{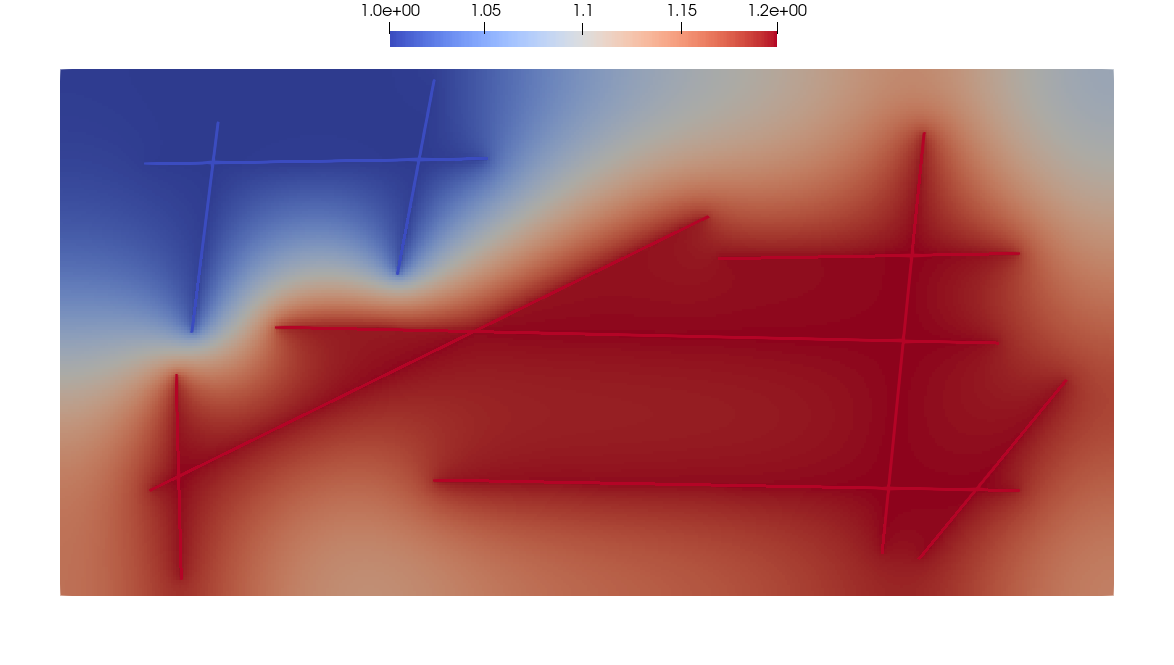}
\caption{Two-continuum media (\textit{2C}). Reference solution at $t = T_{max}/4, T_{max}/2$ and $T_{max}$ (from left to right)}
\label{fig:u2-f}
\end{figure}

\begin{figure}[h!]
\centering
\includegraphics[width=0.32 \textwidth]{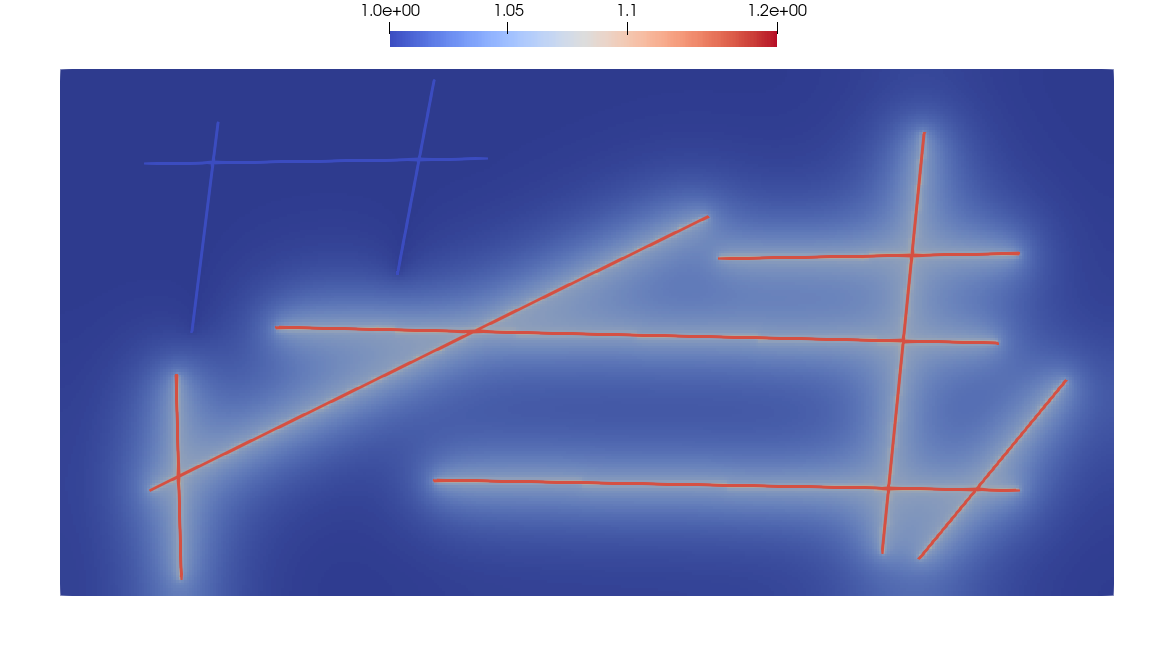}
\includegraphics[width=0.32 \textwidth]{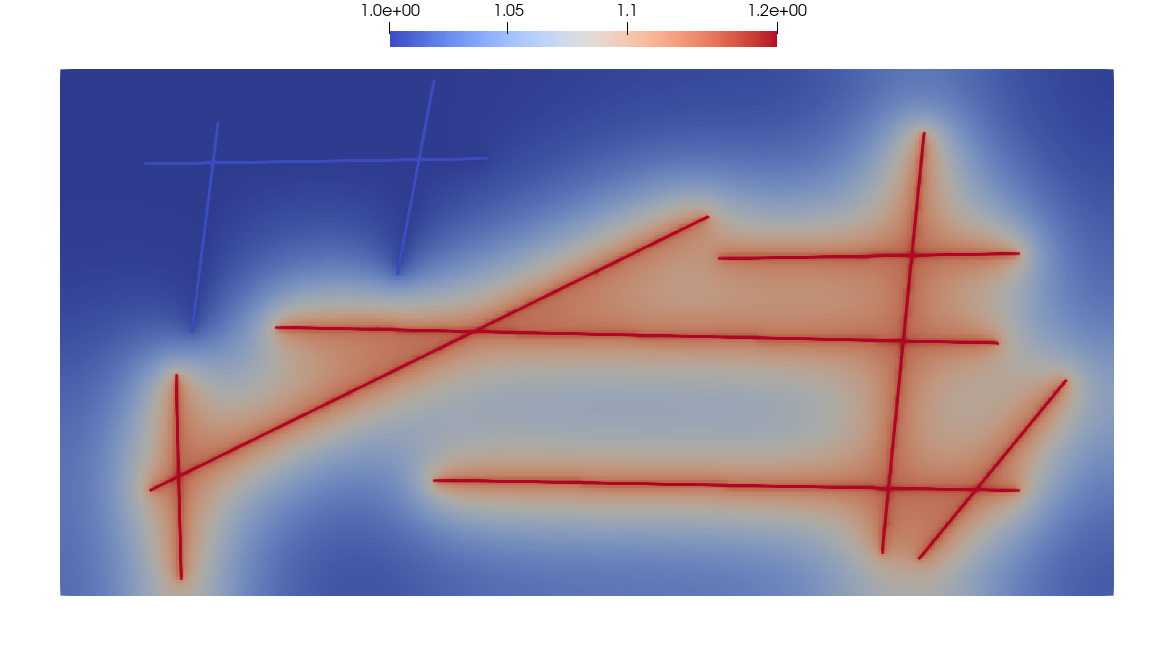}
\includegraphics[width=0.32 \textwidth]{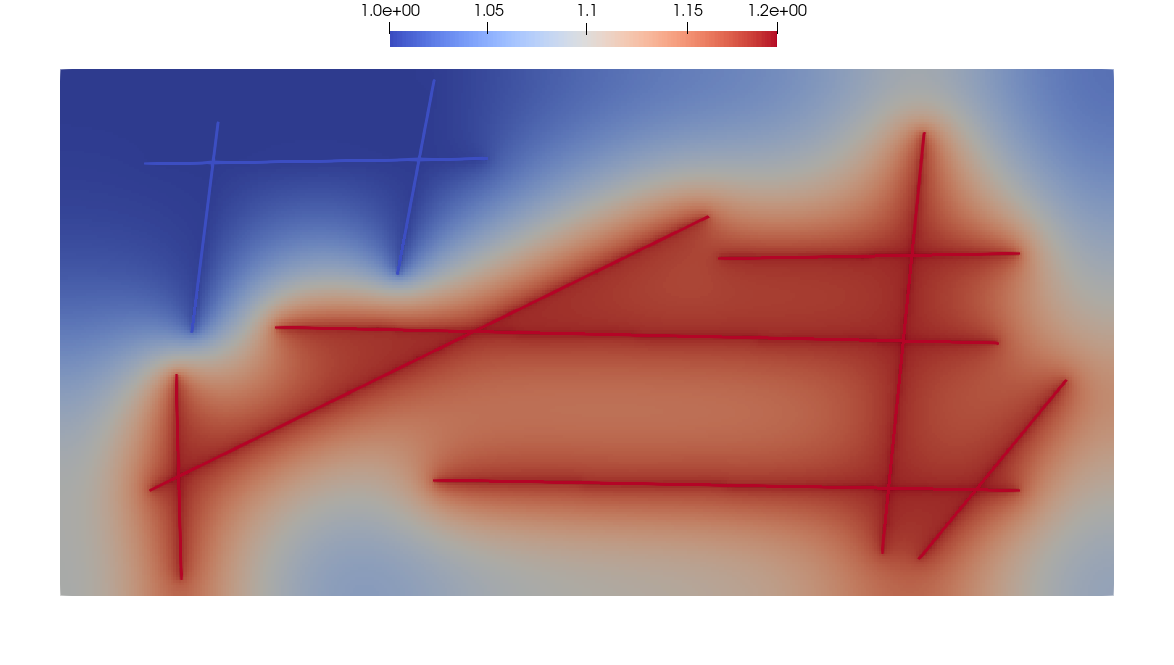}\\
\includegraphics[width=0.32 \textwidth]{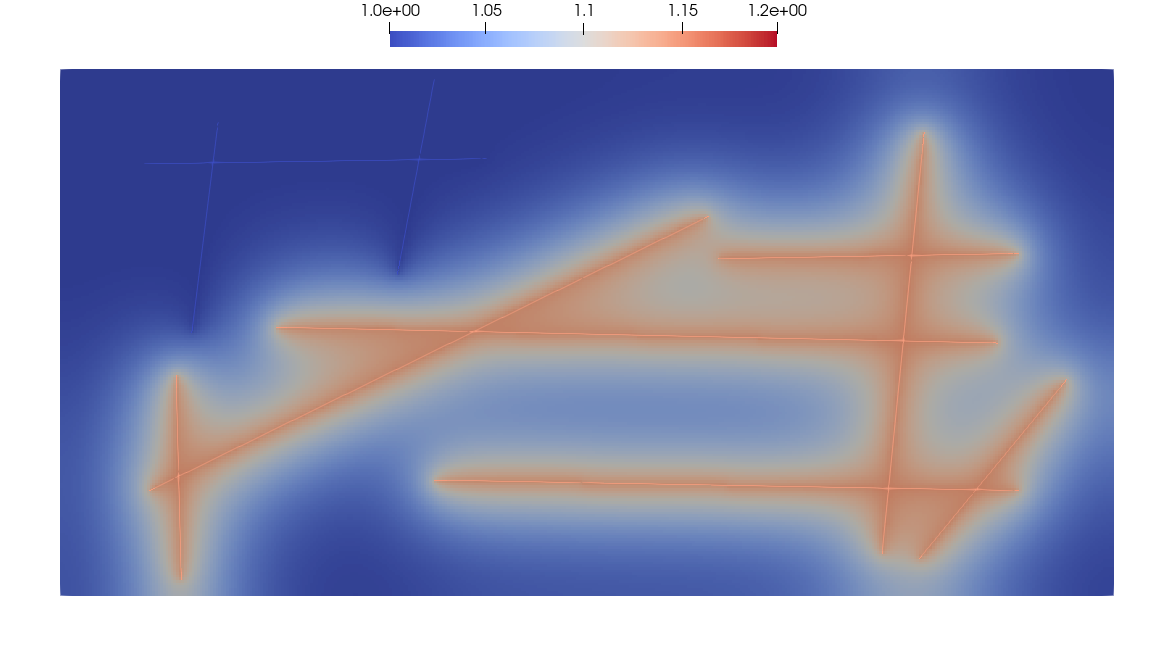}
\includegraphics[width=0.32 \textwidth]{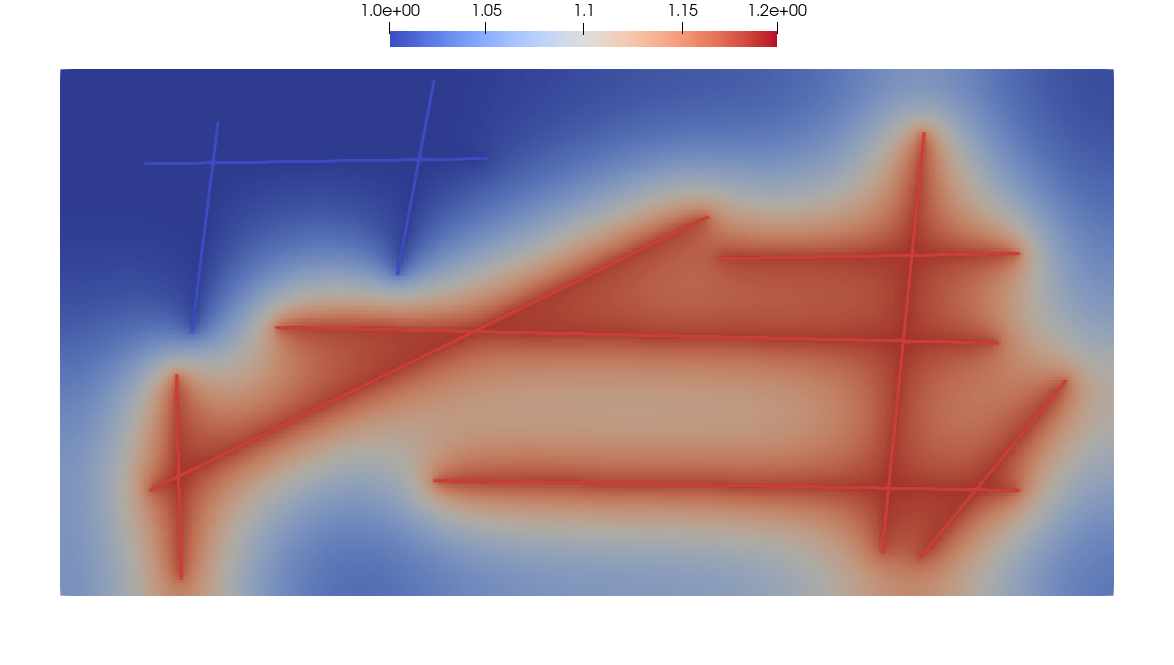}
\includegraphics[width=0.32 \textwidth]{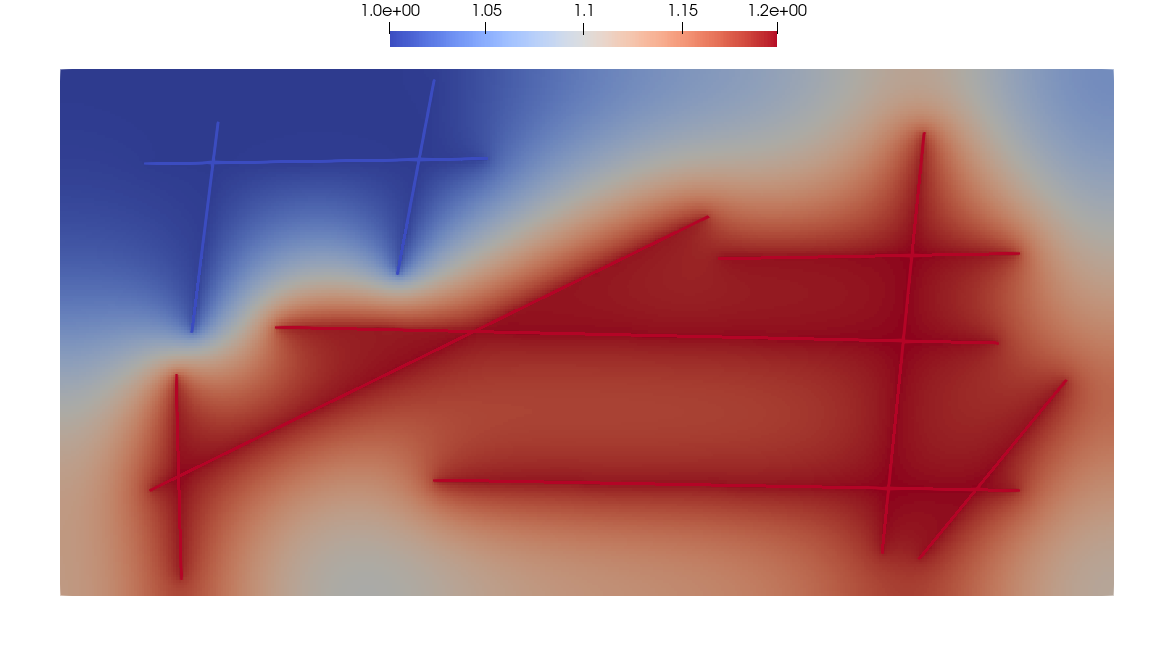}
\caption{Three-continuum media (\textit{3C}). Reference solution at $t = T_{max}/4, T_{max}/2$ and $T_{max}$ (from left to right). First row: fist continuum. Second row: second continuum}
\label{fig:u3-f}
\end{figure}

To perform a numerical investigation of the proposed Implicit and Implicit-Explicit schemes for finite-volume and multiscale approximations, we use the solution on the fine grid with $N_t = 1024$ using coupled scheme (implicit scheme \eqref{mm4} with $\theta=1$) as a reference solution. 
In Figure \ref{fig:u2-f} and \ref{fig:u3-f},  we present a solution for two- and three--continuum media, respectively. The solution $u^n$ is shown at three time layers $n=256, 512$ and $1024$ ($N_t = 1024$).  
In the coupled scheme, we solve a large coupled system of equations that have $DOF_h = 81,474$ for a two-continuum problem (\textit{2C}) and $DOF_h = 161,474$ for a three-continuum problem (\textit{3C}).  

Next, we numerically study schemes for various time step sizes $N_t = 4, 8, 16, 32$ and $64.128$ with $\tau = T_{max}/N_t$. We investigate the influence of the time step size on the method accuracy for coupled and decoupled schemes. 
We consider the following schemes:
\begin{itemize}
\item Coupled schemes:
\begin{itemize}
\item The two-level implicit scheme \eqref{mm4} with $\theta=1$ (Im1).
\item The three-level  implicit schemes \eqref{mm5} with $(\mu, \sigma) = (1,0)$ (Im2-CN,  Crank-Nicholson scheme) and $(\mu, \sigma) = (1.5,0)$ (Im2-BDF, second order backward differentiation formula).
\end{itemize}

\item Decoupled schemes:
\begin{itemize}
\item The two-level  implicit-explicit scheme \eqref{cm5} with $\theta=1$ (ImEx1).
\item The three-level implicit schemes \eqref{cm6} with $(\mu, \sigma) = (1,0)$ (ImEx2-CNAB,  Crank-Nicholson-Adams-Bashforth scheme) and $(\mu, \sigma) = (1.5,0)$ (ImEx2-SBDF, second order semi-implicit backward differentiation formula).
\end{itemize}
\end{itemize}
We also consider three variations of decoupled schemes: L, D, and U-scheme. The schemes are based on the different additive representations of the operator $A$ (see \eqref{cmL}, \eqref{cmD} and \eqref{cmU}) and related to the order of computations.  
Implementation is performed using the PETSc library \cite{balay2019petsc}, using a conjugate gradient (CG) iterative solver with ILU preconditioner. Simulations are performed on MacBook Pro (2.3 GHz Quad-Core Intel Core i7 with 32 GB 3733 MHz LPDDR4X).

\subsection{Implicit-Explicit for fine-scale system}

In this section, we consider a fine grid problem, where a finite-volume approximation is used to construct space approximation. We numerically investigate coupled and decoupled schemes. 

\begin{table}[h!]
\centering
\begin{tabular}{|c|cc|}
\hline
 \multicolumn{3}{|c|}{Implicit} \\ 
\hline
\multirow{ 2}{*}{$N_t$}
&   &\\
& $e_{h,1}$ (\%) & $e_{h,2}$ (\%)\\
\hline
\multicolumn{3}{|c|}{Im1} \\
\hline
4 	& 1.0059 & 17.9344 \\
8 	& 0.5217 & 9.7381 \\
16 	& 0.2673 & 5.0979 \\
32 	& 0.1375 & 2.6395 \\
64 	& 0.0719 & 1.3776 \\
128 & 0.0391 & 0.7395 \\
\hline
\multicolumn{3}{|c|}{Im2-CN} \\
\hline
4 	& 0.0893 & 2.0623 \\
8 	& 0.0247 & 0.8536 \\
16 	& 0.0058 & 0.7281 \\
32 	& 0.0013 & 0.6634 \\
64	& 0.0017 & 0.5349 \\
128 & 0.0021 & 0.3577 \\
\hline
\multicolumn{3}{|c|}{Im2-BDF} \\
\hline
4 	& 0.5519 & 11.9551 \\
8 	& 0.1397 & 3.0781 \\
16 	& 0.0320 & 0.6910 \\
32	& 0.0080 & 0.1791 \\
64 	& 0.0022 & 0.0589 \\
128 & 0.0009 & 0.0308 \\
\hline
\end{tabular}
\ \ \ \
\begin{tabular}{|c|cc|cc|cc|}
\hline
 \multicolumn{7}{|c|}{Implicit-Explicit} \\ 
\hline
\multirow{ 2}{*}{$N_t$}
& \multicolumn{2}{|c|}{L} 
& \multicolumn{2}{|c|}{D}
& \multicolumn{2}{|c|}{U}\\
& $e_{h,1}$ (\%) & $e_{h,2}$ (\%) 
& $e_{h,1}$ (\%) & $e_{h,2}$ (\%) 
& $e_{h,1}$ (\%) & $e_{h,2}$ (\%)   \\
\hline
 \multicolumn{7}{|c|}{ImEx1} \\ 
\hline 
4 
& 3.0290 & $>$100 
& 3.4885 & $>$100 
& 1.3912 & 21.9411 \\
8 
& 1.3841 & 34.6548
& 1.6220 & 39.9831
& 0.7023 & 11.9741 \\
16 
& 0.6551 & 14.2659
& 0.7578 & 16.0401
& 0.3522 & 6.2784 \\
32 
& 0.3201 & 6.6071
& 0.3655 & 7.3213
& 0.1782 & 3.2420 \\
64
& 0.1605 & 3.2350
& 0.1816 & 3.5600
& 0.0919 & 1.6813 \\
128 
& 0.0828 & 1.6410
& 0.0929 & 1.7968
& 0.0489 & 0.8920 \\
\hline
 \multicolumn{7}{|c|}{ImEx2-CNAB} \\ 
\hline 
4 
& 4.5677 & $>$100 
& 6.0503 & $>$100 
& 3.7071 & $>$100 \\
8 
& 0.4268 & 81.8828
& 5.4027 & $>$100 
& 0.3288 & 71.7085 \\
16 
& 0.1609 & 42.4335
& 1.8389 & $>$100 
& 0.1331 & 45.1967 \\
32 
& 0.0631 & 15.446
& 0.2309 & $>$100 
& 0.0722 & 20.8431 \\
64 
& 0.0163 & 1.7747
& 0.0337 & 8.4869
& 0.0496 & 4.1516 \\
128
& 0.0086 & 0.3953
& 0.0010 & 0.3870
& 0.0402 & 0.9422 \\
\hline
 \multicolumn{7}{|c|}{ImEx2-SBDF} \\ 
\hline 
4 
& 2.8739 & $>$100 
& 3.0840 & 91.0879
& 2.5372 & $>$100  \\
8 
& 0.6759 & 25.6489
& 0.8206 & 32.7378
& 0.3708 & 9.8857 \\
16 
& 0.2517 & 5.3571
& 0.3213 & 6.4609
& 0.0721 & 3.3658 \\
32 
& 0.1042 & 2.1288
& 0.1389 & 2.6478
& 0.0342 & 1.6021 \\
64 
& 0.0280 & 0.5888 
& 0.0449 & 0.8388
& 0.0372 & 1.0167 \\
128 
& 0.0121 & 0.2169
& 0.0029 & 0.0729
& 0.0432 & 0.9162 \\
\hline
\end{tabular}
\caption{Two-continuum media (\textit{2C}). Relative errors in \% for coupled (Implicit, Im) and decoupled (Implicit-Explicit, ImEx) schemes}
\label{tab2-f}
\end{table}

\begin{table}[h!]
\centering
\begin{tabular}{|c|cc|}
\hline
 \multicolumn{3}{|c|}{Implicit} \\ 
\hline
\multirow{ 2}{*}{$N_t$}
&   &\\
& $e_{h,1}$ (\%) & $e_{h,2}$ (\%)\\
\hline
\multicolumn{3}{|c|}{Im1} \\
\hline
4 & 1.0600 & 16.3468 \\
8 & 0.5687 & 9.2291  \\
16 & 0.2974 & 4.9515  \\
32 & 0.1548 & 2.5998  \\
64 & 0.0818 & 1.3684  \\
128 & 0.0449 & 0.7396  \\
\hline
\multicolumn{3}{|c|}{Im2-CN} \\
\hline
4 & 0.0918 & 1.7399  \\
8 & 0.0253 & 0.7161  \\
16 & 0.0063 & 0.6049  \\
32 & 0.0024 & 0.5508  \\
64 & 0.0027 & 0.4443  \\
128 & 0.0030 & 0.2978  \\
\hline
\multicolumn{3}{|c|}{Im2-BDF} \\
\hline
4 & 0.4052 & 10.0464  \\
8 & 0.1216 & 2.2998 \\
16 & 0.0313 & 0.5631  \\
32 & 0.0084 & 0.1577  \\
64 & 0.0026 & 0.0580  \\
128 & 0.0012 & 0.0332  \\
\hline
\end{tabular}
\ \ \ \
\begin{tabular}{|c|cc|cc|cc|}
\hline
 \multicolumn{7}{|c|}{Implicit-Explicit} \\ 
\hline
\multirow{ 2}{*}{$N_t$}
& \multicolumn{2}{|c|}{L} 
& \multicolumn{2}{|c|}{D}
& \multicolumn{2}{|c|}{U}\\
& $e_{h,1}$ (\%) & $e_{h,2}$ (\%) 
& $e_{h,1}$ (\%) & $e_{h,2}$ (\%) 
& $e_{h,1}$ (\%) & $e_{h,2}$ (\%)   \\
\hline
 \multicolumn{7}{|c|}{ImEx1} \\ 
\hline 
4 
& 5.1618 & $>$100 
& 5.6480 & $>$100 
& 2.0955 & 33.2298\\
8
& 2.5859 & 47.0465
& 3.0596 & 55.8464
& 1.1479 & 19.1050\\
16
& 1.2739 & 21.7915
& 1.5668 & 26.6997
& 0.6052 & 10.2833\\
32
& 0.6324 & 10.5583
& 0.7912 & 13.1384
& 0.3139 & 5.3677\\
64
& 0.3179 & 5.2425
& 0.4001 & 6.5626
& 0.1630 & 2.7810\\
128
& 0.1625 & 2.6555
& 0.2044 & 3.3234
& 0.0862, & 1.4560\\
\hline
 \multicolumn{7}{|c|}{ImEx2-CNAB} \\ 
\hline 4 
& 6.4256 & $>$100 
& 6.6215 & $>$100 
& 4.5543 & $>$100 \\
8
& 0.3956 & 70.6279
& 4.3660 & $>$100 
& 1.3503 & 65.1615 \\
16
& 0.1244 & 35.4564
& 1.4028 & $>$100 
& 0.6573 & 39.1486 \\
32
& 0.0639 & 12.8874
& 0.1987 & $>$100 
& 0.3406 & 18.1321 \\
64
& 0.0473 & 1.8030
& 0.0437 & 7.0812
& 0.1888 & 4.5650 \\
128
& 0.0428 & 0.8537
& 0.0018 & 0.3221
& 0.1145 & 1.9190 \\
\hline
 \multicolumn{7}{|c|}{ImEx2-SBDF} \\ 
\hline 
4 
& 4.3465 & $>$100 
& 4.7712 & 98.5998
& 3.7142 & 223.4047 \\
8
& 0.6878 & 25.6575
& 1.1494 & 30.5044
& 1.3159 & 22.7793 \\
16
& 0.1249 & 4.2209
& 0.4291 & 6.7873
& 0.5516 & 9.7356 \\
32
& 0.0417 & 1.6920
& 0.1823 & 2.8165
& 0.2875 & 5.0038 \\
64
& 0.0336 & 0.9427
& 0.0590 & 0.8991
& 0.1723 & 2.9180 \\
128
& 0.0473 & 0.8639
& 0.0040 & 0.0873
& 0.1182 & 1.9514 \\
\hline
\end{tabular}
\caption{Three-continuum media (\textit{3C}). Relative errors in \% for coupled (Implicit, Im) and decoupled (Implicit-Explicit, ImEx) schemes}
\label{tab3-f}
\end{table}

To compare schemes, we calculate $L_2$ and energy relative errors in percentage  at time $t$ 
\[
e_{h,1}(t) = \frac{||u_h(t) - \tilde{u}_h(t)||}{||u_h(t)||} \times 100 \%, 
\quad
e_{h,2}(t) = \frac{||u_h(t) - \tilde{u}_h(t)||_{A}}{||u_h(t)||_{A}} \times 100 \%, 
\]
with 
\[
||u|| =  \sqrt{(u, u)}, \quad 
||u||_A =  \sqrt{(Au, u)} 
\]
where $u_h$ is the reference solution (coupled Im1-scheme with $N_t=1024$), and $\tilde{u}$ is the approximate solution using different schemes (Im1, Im2-CN, Im2-BDF, ImEx1, ImEx2-CNAB and ImEx2-SBDF) and $N_t = 4,8,16,32,64$ and $128$.

\begin{figure}[h!]
\centering
\begin{subfigure}[b]{0.45\textwidth}
\centering
\includegraphics[width=1 \textwidth]{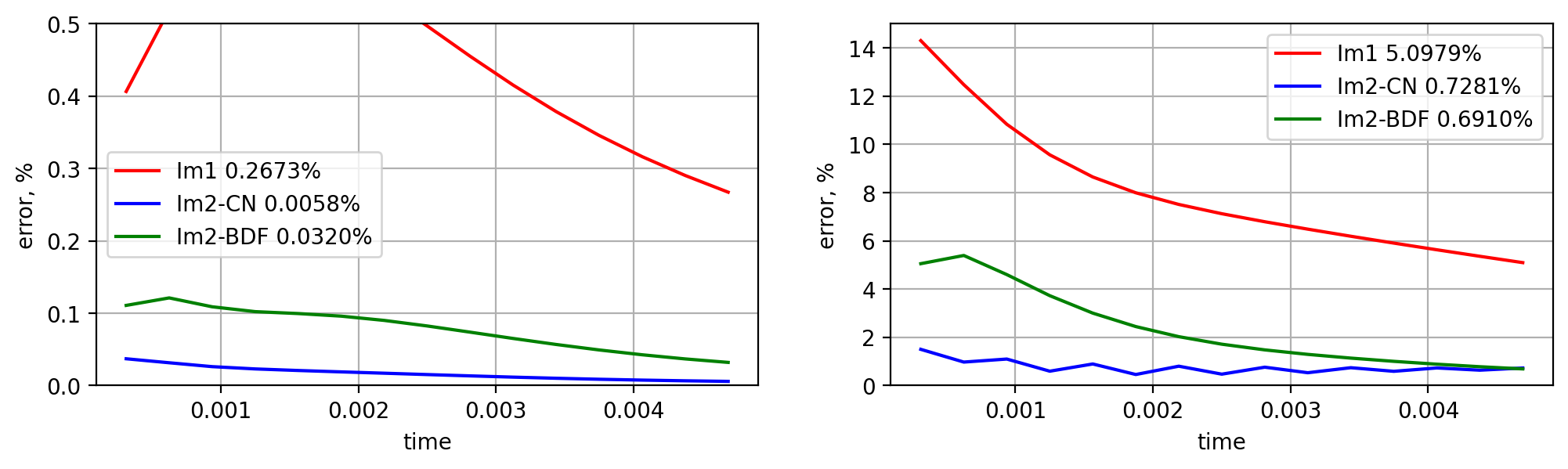}
\includegraphics[width=1 \textwidth]{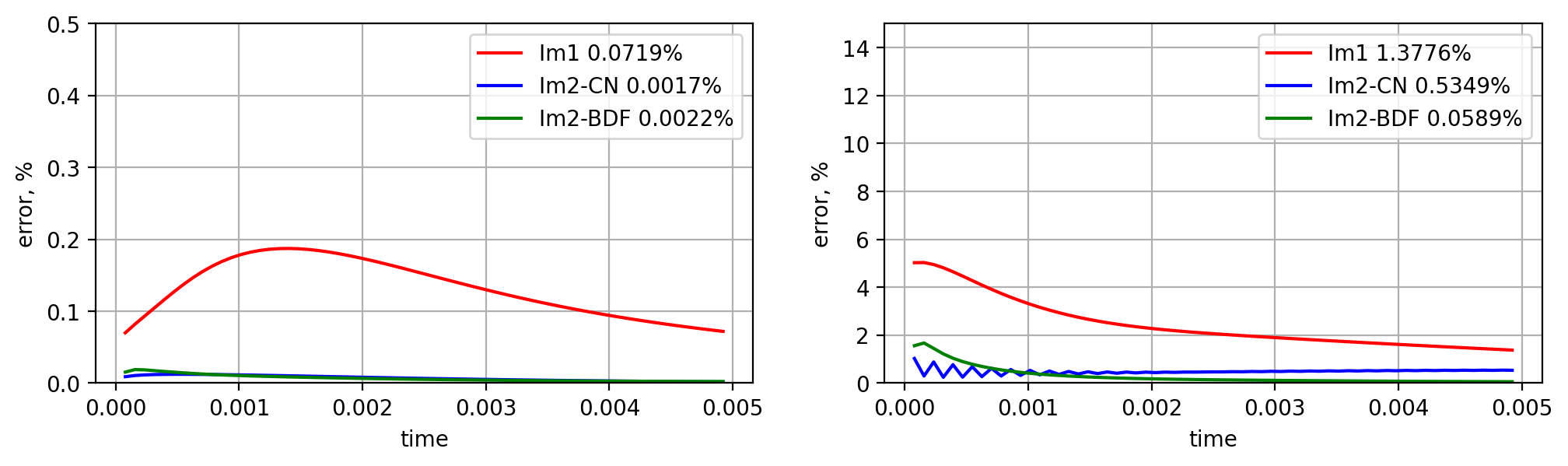}
\caption{Two-continuum media (\textit{2C}). Errors $e_{h,1}(t)$ (left) and $e_{h,2}(t)$ (right)}
\end{subfigure}
\ \ \ \ 
\begin{subfigure}[b]{0.45\textwidth}
\centering
\includegraphics[width=1 \textwidth]{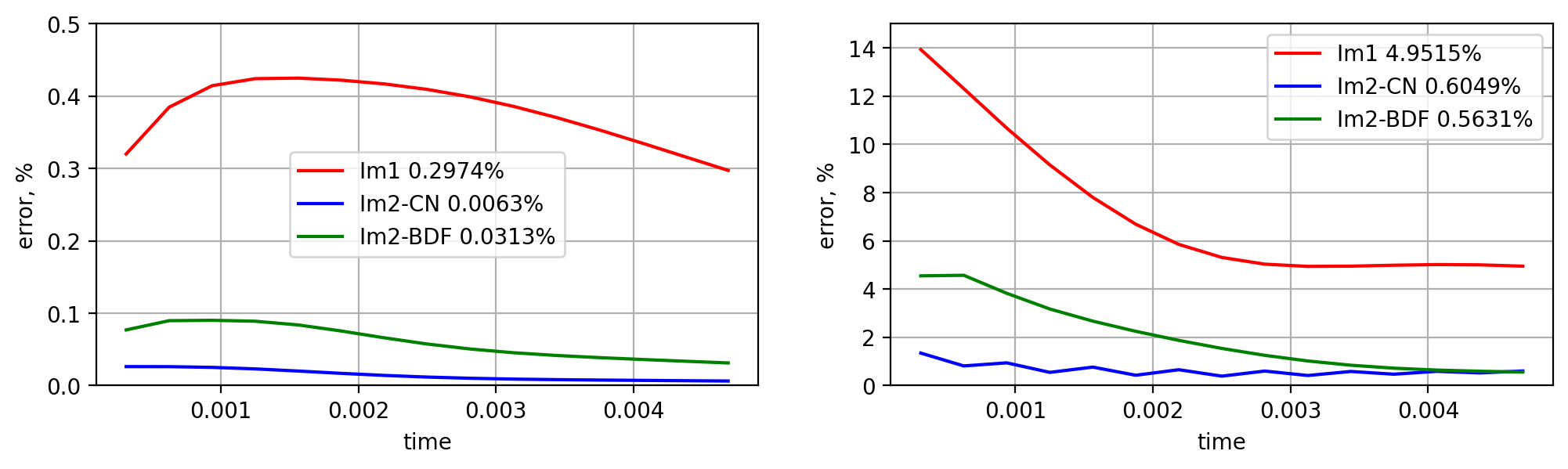}
\includegraphics[width=1 \textwidth]{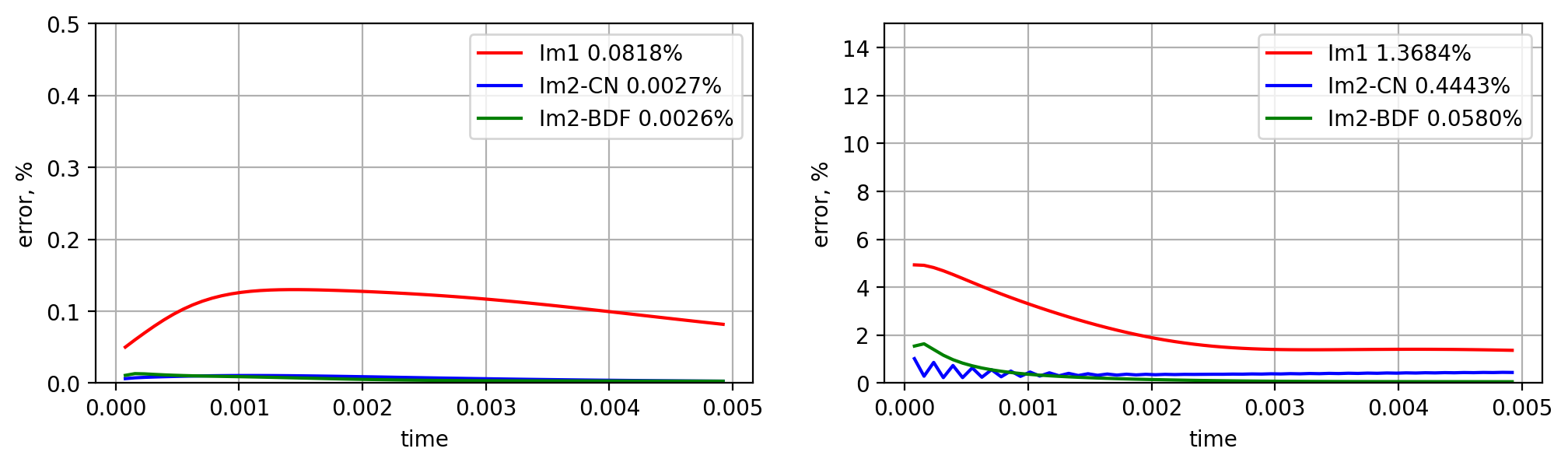}
\caption{Three-continuum media (\textit{3C}). Errors $e_{h,1}(t)$ (left) and $e_{h,2}(t)$ (right)}
\end{subfigure}
\caption{The dynamic of the error with $N_t = 16$ (first row) and $64$ (second row). The label is given with the error at the final time}
\label{fig:u-et}
\end{figure}

\begin{table}[h!]
\centering
\begin{tabular}{|c|cc|}
\hline
 \multicolumn{3}{|c|}{Implicit} \\ 
\hline
$N_t$ 
& time$_{tot}$ & $N_{AvIt}$ \\
\hline
\multicolumn{3}{|c|}{Im1} \\
\hline
4 	& 2.294 & 429.5 \\
8 	& 3.869 & 362.0 \\
16 	& 6.441 & 280.6 \\
32 	& 9.747 & 227.0 \\
64 	& 15.770 & 184.1 \\
128 & 25.311 & 146.5 \\
\hline
\multicolumn{3}{|c|}{Im2-CN} \\
\hline
4 	& 1.450 & 359.7  \\
8 	& 2.605 & 278.1 \\
16 	& 4.558 & 227.1 \\
32 	& 7.653 & 184.2 \\
64 	& 12.412 & 146.5 \\
128 & 20.322 & 117.9 \\
\hline
\multicolumn{3}{|c|}{Im2-BDF} \\
\hline
4 	& 1.582 & 391.3 \\
8 	& 2.985 & 316.1 \\
16 	& 4.948 & 244.2 \\
32 	& 8.404 & 201.6 \\
64 	& 13.563 & 160.4 \\
128 & 22.015 & 129.7 \\
\hline
\end{tabular}
\ \ \ \
\begin{tabular}{|c|c|cc|cc|}
\hline
 \multicolumn{6}{|c|}{Implicit-Explicit} \\ 
\hline
$N_t$ 
& time$_{tot}$ 
& time$_{tot}^m$ & $N_{AvIt}^m$ 
& time$_{tot}^f$ & $N_{AvIt}^f$   \\
\hline
 \multicolumn{6}{|c|}{ImEx1} \\ 
\hline 
4 	& 0.842 & 0.837 & 160.0 & 0.004  & 59.2 \\
8 	& 1.314 & 1.306 & 124.0 & 0.008  & 58.1 \\
16 	& 1.994 & 1.977 & 93.9 	& 0.016  & 57.8 \\
32 	& 2.907 & 2.874 & 68.0 	& 0.033  & 57.1 \\
64 	& 4.347 & 4.283 & 50.0 	& 0.064  & 55.5 \\
128 & 6.351 & 6.227 & 36.0 	& 0.124  & 53.3 \\
\hline
 \multicolumn{6}{|c|}{ImEx2-CNAB} \\ 
\hline 
4 	& 0.550 & 0.545  & 124.7& 0.004  & 59.0  \\
8 	& 1.018 & 1.009  & 93.9 & 0.009  & 57.3  \\
16	& 1.654 & 1.633  & 68.0 & 0.020  & 57.3  \\
32	& 2.704 & 2.662  & 50.0 & 0.042  & 55.5  \\
64 	& 4.284 & 4.203  & 36.0 & 0.081  & 53.1 \\
128	& 6.777 & 6.609  & 25.0 & 0.167  & 53.0 \\
\hline
 \multicolumn{6}{|c|}{ImEx2-SBDF} \\ 
\hline 
4 	& 0.602 & 0.598  & 139.0 	& 0.003  & 58.3   \\
8 	& 1.105 & 1.095  & 105.0 	& 0.009  & 57.1  \\
16 	& 1.828 & 1.808  & 78.0 	& 0.020  & 57.4  \\
32 	& 2.935 & 2.893  & 57.0 	& 0.042  & 56.8 \\
64 	& 4.651 & 4.568  & 41.0 	& 0.083  & 54.1 \\
128 & 7.517 & 7.347  & 29.0 	& 0.169  & 53.0   \\
\hline
\end{tabular}
\caption{Two-continuum media (\textit{2C}). Time of the solution and the average number of iterations.  Coupled (Implicit, Im) and decoupled (Implicit-Explicit, ImEx) with $N_h = 81474$. Reference solution: time$_{tot} = 103.76$ sec. and $N_{AverIt} = 73.98$ for $N_t = 1024$ (Im1)}
\label{tab2-f-t}
\end{table}

\begin{table}[h!]
\centering
\begin{tabular}{|c|cc|}
\hline
 \multicolumn{3}{|c|}{Implicit} \\ 
\hline
$N_t$ 
& time$_{tot}$ & ${N}_{AvIt}$ \\
\hline
\multicolumn{3}{|c|}{Im1} \\
\hline
4 	& 4.527 & 369.8  \\
8 	& 7.698 & 315.9  \\
16 	& 12.460 & 256.3  \\
32 	& 20.605 & 213.4  \\
64 	& 33.814 & 175.0  \\
128 & 53.684 & 140.1  \\
\hline
\multicolumn{3}{|c|}{Im2-CN} \\
\hline
4 	& 2.833 & 314.7  \\
8 	& 5.452 & 256.1  \\
16 	& 9.701 & 213.3  \\
32 	& 16.706 & 175.0  \\
64 	& 27.507 & 140.1  \\
128 & 46.016 & 115.4  \\
\hline
\multicolumn{3}{|c|}{Im2-BDF} \\
\hline
4 	& 3.069 & 336.0  \\
8 	& 5.976 & 280.4  \\
16 	& 10.483 & 229.3  \\
32 	& 18.220 & 191.9  \\
64 	& 29.982 & 154.6  \\
128 & 49.034 & 125.1  \\
\hline
\end{tabular}
\ \ \ \
\begin{tabular}{|c|c|cc|cc|cc|}
\hline
 \multicolumn{8}{|c|}{Implicit-Explicit} \\ 
\hline
$N_t$ 
& time$_{tot}$ 
& time$_{tot}^1$ & ${N}_{AvIt}^1$ 
& time$_{tot}^2$ & ${N}_{AvIt}^2$  
& time$_{tot}^f$ & ${N}_{AvIt}^f$   \\
\hline
 \multicolumn{8}{|c|}{ImEx1} \\ 
\hline 
4 	& 0.767 & 0.042 & 7.0 & 0.720 & 139.0 & 0.004 & 59.2 \\
8 	& 1.267 & 0.074 & 6.0 & 1.185 & 113.6 & 0.008 & 59.0 \\
16 	& 2.012 & 0.130 & 5.0 & 1.865 & 89.0 & 0.016 & 58.3 \\
32 	& 3.029 & 0.213 & 4.0 & 2.783 & 66.0 & 0.032 & 57.0 \\
64 	& 4.645 & 0.429 & 4.0 & 4.151 & 49.0 & 0.064 & 55.9 \\
128 & 6.858 & 0.698 & 3.0 & 6.037 & 35.0 & 0.122 & 53.2 \\
\hline
 \multicolumn{8}{|c|}{ImEx2-CNAB} \\ 
\hline 
4 	& 0.584 & 0.080 & 6.0 & 0.499 & 113.7 & 0.004 & 59.3 \\
8 	& 1.128 & 0.179 & 5.0 & 0.938 & 88.9 & 0.009 & 57.6 \\
16 	& 1.953 & 0.370 & 4.0 & 1.563 & 66.0 & 0.020 & 57.0 \\
32 	& 3.358 & 0.758 & 4.0 & 2.558 & 49.0 & 0.041 & 55.7 \\
64 	& 5.704 & 1.525 & 3.0 & 4.097 & 35.0 & 0.081 & 53.2 \\
128 & 9.601 & 2.928 & 3.0 & 6.512 & 25.0 & 0.159 & 52.9 \\
\hline
 \multicolumn{8}{|c|}{ImEx2-SBDF} \\ 
\hline 
4 	& 0.625 & 0.085 & 7.0 & 0.536 & 124.0 & 0.004 & 59.0 \\
8 	& 1.217 & 0.189 & 6.0 & 1.019 & 98.9 & 0.009 & 58.3 \\
16	& 2.139 & 0.394 & 5.0 & 1.724 & 75.0 & 0.019 & 57.1 \\
32 	& 3.613 & 0.749 & 4.0 & 2.822 & 56.0 & 0.041 & 56.5 \\
64 	& 6.126 & 1.534 & 3.0 & 4.509 & 40.0 & 0.081 & 54.3 \\
128 & 10.389 & 3.016 & 3.0 & 7.211 & 29.0 & 0.161 & 53.0 \\
\hline
\end{tabular}
\caption{Three-continuum media (\textit{3C}). Time of the solution and the average number of iterations.  Coupled (Implicit, Im) and decoupled (Implicit-Explicit, ImEx)}
\label{tab3-f-t}
\end{table}

In Tables \ref{tab2-f} and \ref{tab3-f}, we present errors for two and three-continuum media, respectively. We show an error in percentage at the final time. We observe the following convergence behavior of the proposed schemes from the result. 
For the three-level  schemes, we have a smaller error. 
For example, in Table \ref{tab2-f} for a two-continuum test problem and in Table \ref{tab3-f} for a three-continuum test problem, we have less than 1\% of energy error for $N_t = 128$ time steps in the two-level scheme, Im1. For three-level  implicit schemes, we have less than 1\%  energy error for $N_t = 8$ and $N_t = 16$ time steps in Im2-CN and Im2-BDF, respectively. 
In the uncoupled case (D-schemes) for two-continuum case in Table \ref{tab2-f}, we have near 2\% of energy error in two-level  decoupled scheme (ImEx1) for $N_t = 128$ and $N_t = 32$ for three-level  decoupled scheme (ImEx2-SBDF). 
For the three-continuum case in Table \ref{tab3-f}, we have nearly 3\% of energy error in the first-order decoupled scheme (ImEx1) for $N_t = 128$ and $N_t = 32$ for second-order decoupled scheme (ImEx2-SBDF). 
Therefore, we can use a 4-times bigger time step size in the second-order schemes.

Next, we compare the coupled and decoupled schemes.
For the two-level  schemes in Table \ref{tab2-f}, we have nearly 1\%  of energy error for $N_t = 64$ time steps in both coupled and decoupled schemes (Im1 and ImEx1-U). Moreover, we observe that the U-decoupling scheme gives better results compared with L and D-schemes in ImEx1. We see that continuum order matters in calculations: (1) It is better to use previous continuum solution information in calculations, and (2) First, calculate the continuum with higher permeability or faster flow. 
For the second-order decoupled scheme, we observe that the errors are smaller for the L-scheme in ImEx2-CNAB and ImEx2-SBDF. However, it is better to use a U-scheme for a smaller time step size. Moreover, we also observe that the D-scheme gives more considerable errors for ImEx2-CNAB with fewer time steps and can lead to a big energy error.
By comparing the coupled and uncoupled schemes of the three-level, we observe a more significant influence on the method accuracy. For example, we can use $N_t = 16$ in the Im2-BDF coupled scheme to obtain near 1\%  of energy error, but in the ImEx2-SBDF decoupled scheme, we should use $N_t = 32$.

In Figure \ref{fig:u-et}, we depicted the dynamics of the error by time. We show the errors in $L_2$ and energy norms for $N_t = 16$ on the first row and for $N_t = 64$ on the second row. The figures illustrate a good performance of the second-order schemes compared with a first-order for a coupled case of calculations. We observe a well-known zigzag behavior of the Crank-Nicholson scheme at the beginning of simulations. When we use a more significant time step size ($N_t=16$, first row in Figure \ref{fig:u-et}), we observe more minor errors for the Im2-CN scheme compared with the Im2-BDF for both two-continuum and three-continuum test problems. However, for $N_t=64$, we follow a more minor error for the Im2-BDF scheme. 
Finally, we discuss the computational performance of the presented schemes for the fine grid problem. We show the total time of computations in seconds with an average number of iterations for the preconditioned conjugate gradient method in Tables \ref{tab2-f-t} and \ref{tab3-f-t}.  In both tables, time$_{tot}$ denotes the total time of computations for coupled schemes for given $N_t$ and time$_{tot}^{\alpha}$  is the total time of calculations related to the $\alpha$-continuum in decoupled schemes, $\alpha=m, f$ for two-continuum test and $\alpha=1,2,f$ for three-continuum test. ${N}_{AvIt}$ denotes an average number of iterations in one time layer, and ${N}_{AvIt}^{\alpha}$ is related to the $\alpha$-continuum in the decoupled schemes.  
For the decoupled schemes, we have the same computational time and number of iterations in L, D, and U-schemes. Therefore, the results in Tables \ref{tab2-f-t} and \ref{tab3-f-t} are presented for U-scheme. 
To compare the computational efficiency, we present the reference solution's time and number of iterations.
\begin{itemize}
\item 
Two-continuum media (\textit{2C}): $N_h = 81474$, time$_{tot} = 103.76$ sec. and $N_{AverIt} = 73.98$  using coupled (Im1) scheme for $N_t = 1024$.
\item 
Three-continuum media (\textit{3C}): $N_h = 161474$, time$_{tot} = 234.37$ sec. and $N_{AverIt} = 73.90$  using coupled (Im1) scheme for $N_t = 1024$.
\end{itemize}
We observe the same computation time for first- and second-order schemes from the presented results. 
For the coupled schemes for the two-continuum test, we have $12-15$ sec for $N_t=64$ and $20-25$ sec for $N_t=128$. 
In the coupled case on each time iteration, we solve the linear system with $N_h = 81474$ and $N_h=161474$ unknowns for two and three-continuum tests, respectively. In the decoupled case, we decoupled the system into systems with smaller sizes that are faster to solve. We have $3-4$ times faster calculations for the two-continuum test with nearly $4$ sec for $N_t=64$ and $6-7$ sec for $N_t=128$. In the three-continuum cases, we obtain better results with $46-53$ sec for the coupled schemes and $6-10$ sec for decoupled implicit-explicit schemes for $N_t=128$. Then, we have $7.8$ times faster calculations for the two-level  scheme and $4.8$ times faster calculations for the second order scheme. The computational time is reduced due to the small size of the system in the decoupled case that requires a smaller number of iterations. The implicit-explicit schemes also decouple continuums and significantly affect the number of iterations. Furthermore, we observe that the most time is taken by a first continuum defined in $\Omega$ in \textit{2C} test and by second continuum $p_2 \in \Omega$ in \textit{3C} test. The fracture continuum requires a more significant number of iterations in both test cases. However, the time of calculations is very short due to the size of the fracture continuum system $N_h^f = 1474$ compared with the continuum defined in $\Omega$ with $N_h^m = 80,000$ for \textit{2C} and $N_h^1=N_h^2 = 80,000$ for \textit{3C}.

\subsection{Implicit-Explicit for coarse-scale system}

This section considers a coarse grid problem constructed with NLMC approximation. 
Similar to the previous subsection, we numerically investigate coupled and decoupled schemes. In this case, we are more interested in the coarse grid system errors; however, we can reconstruct the fine-scale solution using the constructed projection operator. 
We use a $L_2$ relative error in percentage at time $t$  on fine and coarse grids to compare the proposed schemes.
\[
e_{ms,1}(t) = \frac{||u_h(t) - u_{ms}(t)||}{||u_h(t)||} \times 100 \%, 
\quad
e_{H,1}(t) = \frac{||\bar{u}_h(t) - u_H(t)||}{||\bar{u}_h(t)||} \times 100 \%, 
\]
with 
\[
\bar{u}_h(t) = \frac{1}{|K_i|} \int_{K_i} u_h(t) dx
\]
where $u_h$ is the reference solution,  $\bar{u}_h$ is the coarse grid reference solution (average over coarse cell $K_i$), $u_{ms}$ is the multiscale solution on the fine grid, and $u_H$ is the multiscale solution on the coarse grid.

\begin{table}[h!]
\centering
\begin{tabular}{|c|cc|cc|cc|cc|cc|}
\hline
 \multicolumn{11}{|c|}{Implicit schemes + Ms} \\ 
\hline
\multirow{ 2}{*}{$N_t$}
& \multicolumn{2}{|c|}{$K_i^{+,1}$} 
&  \multicolumn{2}{|c|}{$K_i^{+,2}$} 
&  \multicolumn{2}{|c|}{$K_i^{+,3}$} 
&  \multicolumn{2}{|c|}{$K_i^{+,4}$} 
&  \multicolumn{2}{|c|}{$K_i^{+,5}$} \\
& $e_{ms,1}$ & $e_{H,1}$
& $e_{ms,1}$ & $e_{H,1}$
& $e_{ms1}$  & $e_{H,1}$ 
& $e_{ms,1}$ & $e_{H,1}$ 
& $e_{ms,1}$ & $e_{H,1}$ \\
\hline
\multicolumn{11}{|c|}{Ms-Im1} \\
\hline	
4 
& 83.9536 & 86.4985 
& 3.2716 & 1.0142 & 1.2637 & 0.9429 & 1.0209 & 0.9389 & 1.0054 & 0.9390	\\
8 
& 85.4075 & 87.7092 
& 3.1621 & 0.5784 & 0.9254 & 0.4864 & 0.5502 & 0.4812 & 0.5201 & 0.4813	\\
16 
& 85.8038 & 88.0360 
& 3.1315 & 0.3608 & 0.8098 & 0.2491 & 0.3218 & 0.2431 & 0.2664 & 0.2431	\\
32 
& 85.9576 & 88.1630 
& 3.1232 & 0.2584 & 0.7769 & 0.1289 & 0.2286 & 0.1221 & 0.1397 & 0.1221	\\
64 
& 85.9905 & 88.1897	
& 3.1209 & 0.2122 & 0.7681 & 0.0691 & 0.1982 & 0.0613 & 0.0805 & 0.0613	\\
128 
& 85.8956 & 88.1091 
& 3.1202 & 0.1916 & 0.7658 & 0.0401 & 0.1899 & 0.0311 & 0.0566 & 0.0310	\\
\hline
\multicolumn{11}{|c|}{Ms-Im2-CN} \\
\hline	
4 
& 81.6139 & 84.5647 
& 3.6606 & 1.9931 & 1.9531 & 1.9128 & 1.7926 & 1.9046 & 1.7834 & 1.9048	\\
8 
& 93.0646 & 94.1468 
& 3.4462 & 1.4434 & 1.5967 & 1.4458 & 1.4083 & 1.4442 & 1.3965 & 1.4446	\\
16 
& 99.4061 & 99.4906 
& 3.4133 & 1.3131 & 1.5556 & 1.3494 & 1.3653 & 1.3508 & 1.3532 & 1.3513	\\
32 
& 100.006 & 100.012
& 3.4074 & 1.2780 & 1.5548 & 1.3282 & 1.3659 & 1.3308 & 1.3538 & 1.3313	\\
64 
& 99.9998 & 99.9984 
& 3.4067 & 1.2671 & 1.5586 & 1.3234 & 1.3709 & 1.3265 & 1.3589 & 1.3270	\\
128 
& 100.000 & 100.001 
& 3.4068 & 1.2631 & 1.5615 & 1.3223  & 1.3745 & 1.3256 & 1.3625 & 1.3261	\\
\hline
\multicolumn{11}{|c|}{Ms-Im2-BDF} \\
\hline	
4 
& 81.9196 & 84.7940 
& 5.1153 & 3.8363 & 4.1302 & 3.7634 & 4.0598 & 3.7587 & 4.0565 & 3.7589	\\
8 
& 84.5305 & 86.9679 
& 3.4730 & 1.4733 & 1.6554 & 1.3504 & 1.4728 & 1.3418 & 1.4618 & 1.3418	\\
16 
& 85.3065 & 87.6165 
& 3.1968 & 0.7177  & 0.9912 & 0.5817 & 0.6502 & 0.5723 & 0.6244 & 0.5722	\\
32 
& 85.6212 & 87.8796 
& 3.1411 & 0.4220 & 0.8223 & 0.2801 & 0.3486 & 0.2706 & 0.2977 & 0.2704	\\
64 
& 85.6438 & 87.8975 
& 3.1268 & 0.2907 & 0.7799 & 0.1431 & 0.2365 & 0.1334 & 0.1519 & 0.1332	\\
128 
& 85.3894 & 87.6826 
& 3.1225 & 0.2302 & 0.7691 & 0.0779 & 0.2009 & 0.0678 & 0.0865 & 0.0676	\\
\hline
\end{tabular}
\caption{Two-continuum media (\textit{2C}). Relative errors in \% of coupled schemes for multiscale approximation}
\label{tab2-ms-ov}
\end{table}

\begin{figure}[h!]
\centering
\includegraphics[width=0.19 \textwidth]{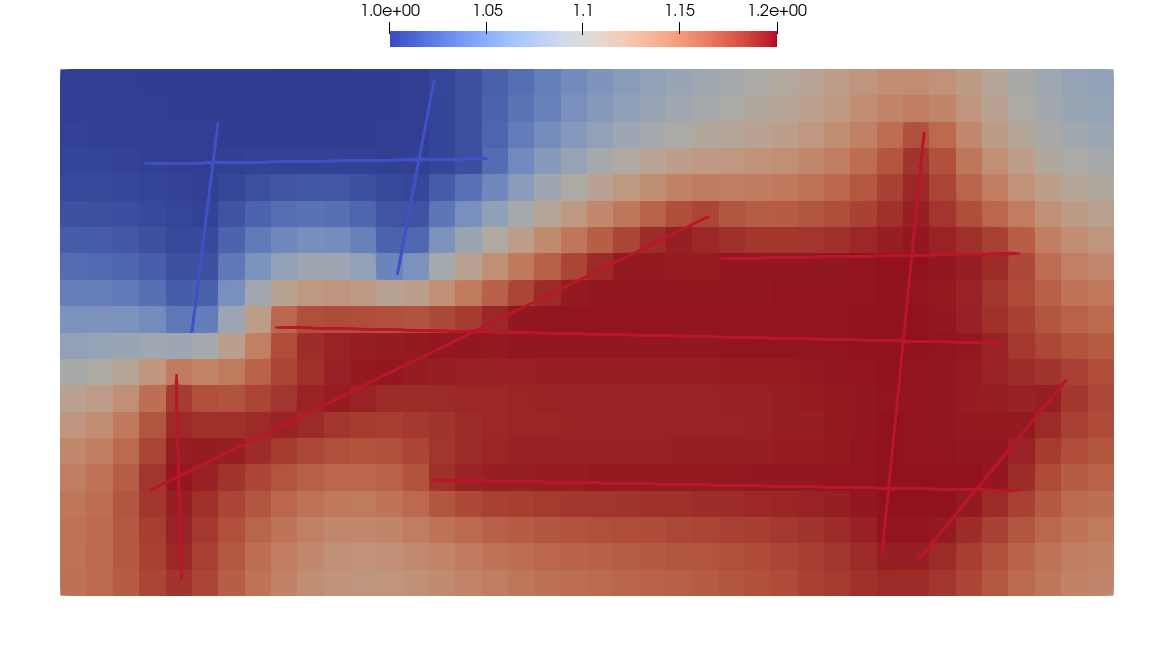}
\includegraphics[width=0.19 \textwidth]{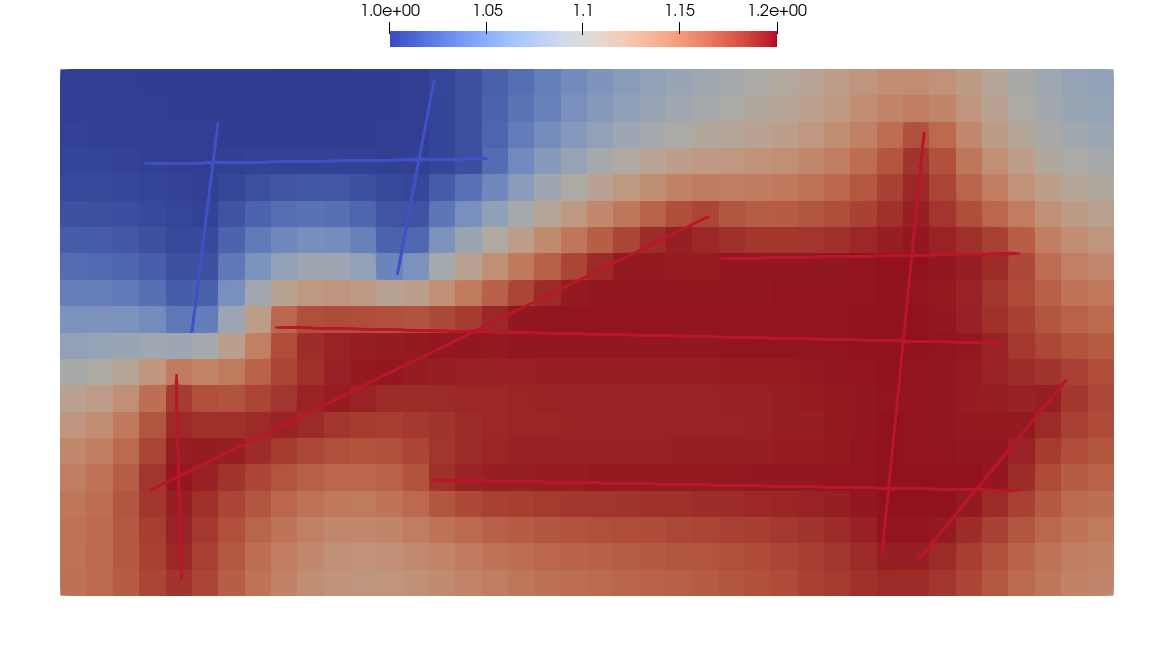}
\includegraphics[width=0.19 \textwidth]{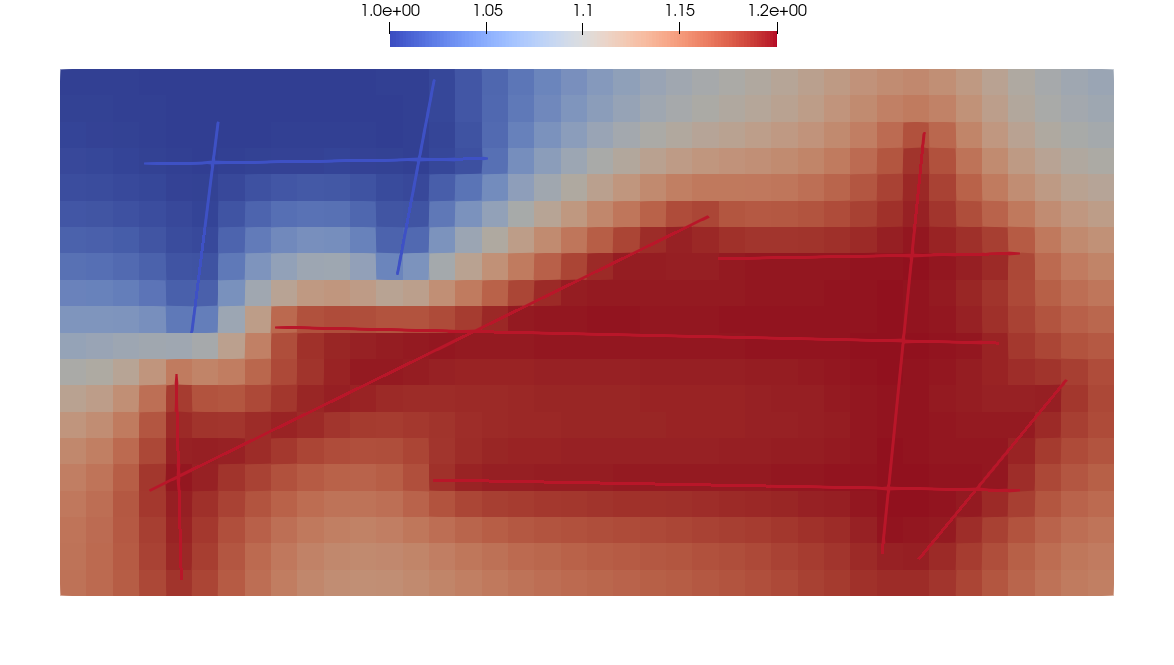}
\includegraphics[width=0.19 \textwidth]{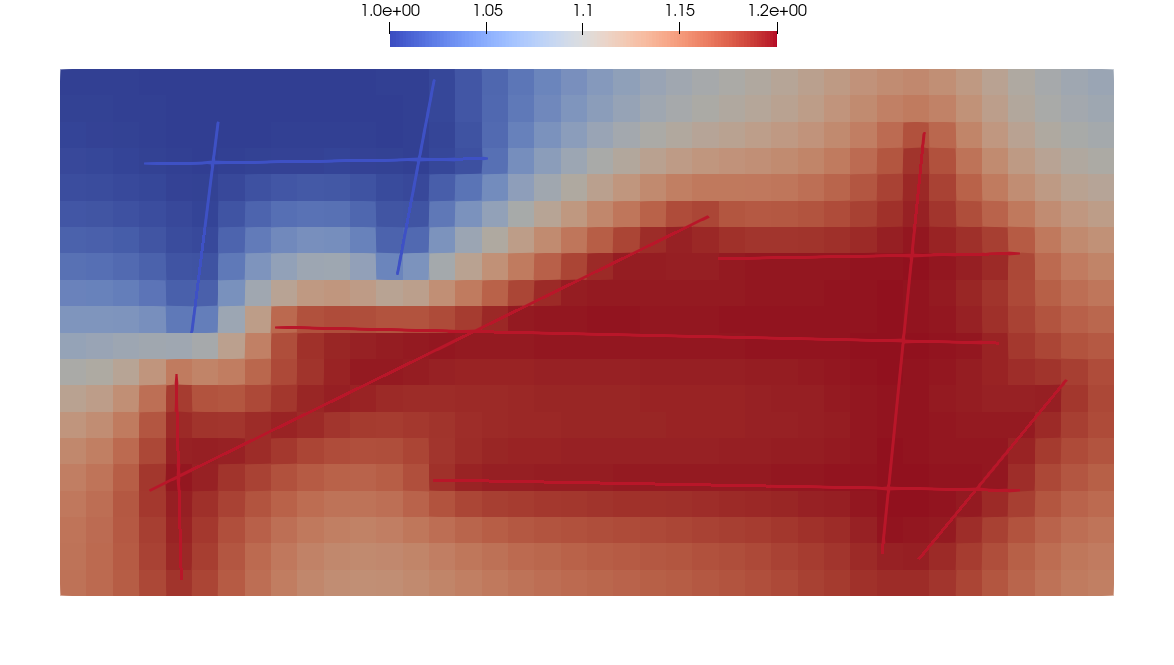}
\includegraphics[width=0.19 \textwidth]{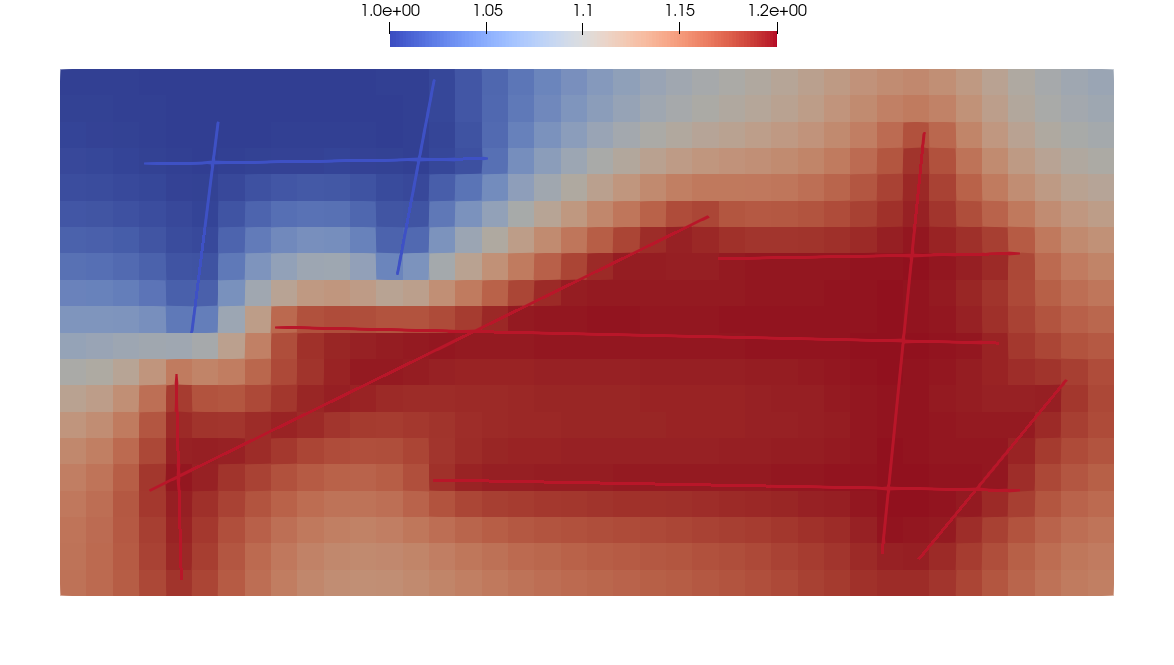}\\
\includegraphics[width=0.19 \textwidth]{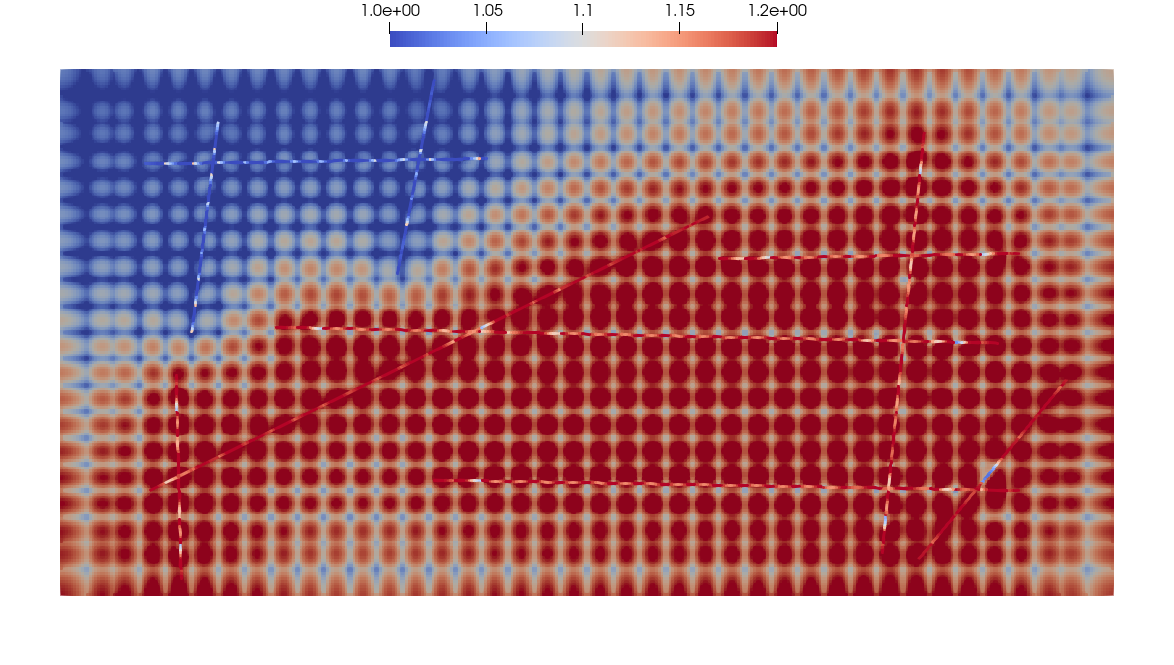}
\includegraphics[width=0.19 \textwidth]{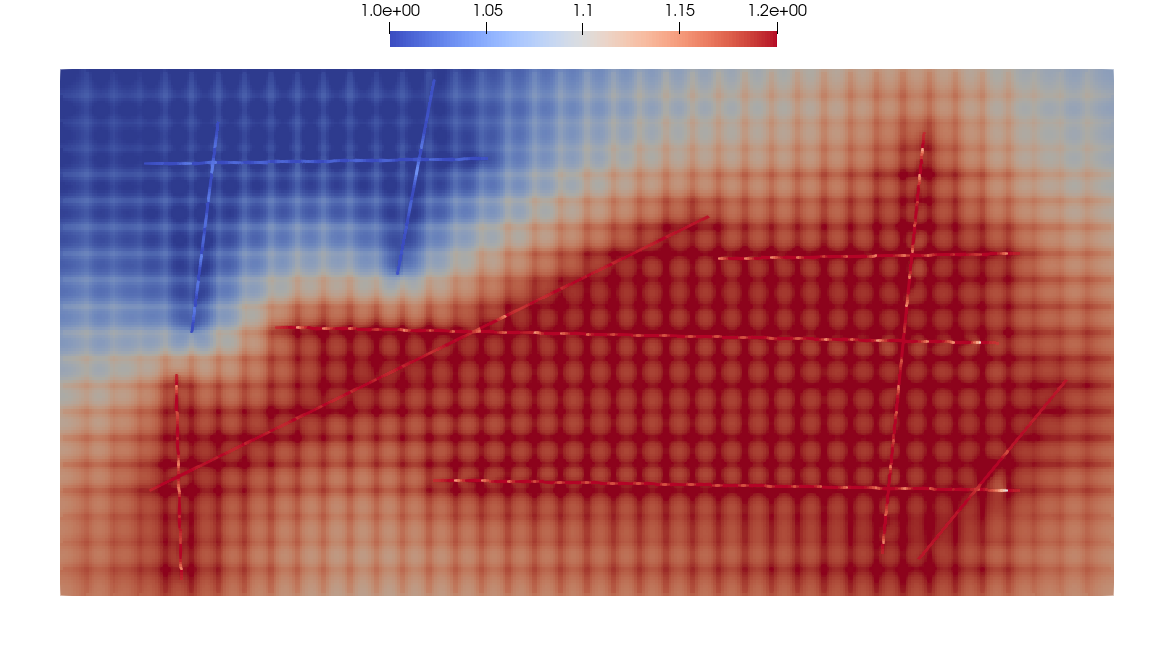}
\includegraphics[width=0.19 \textwidth]{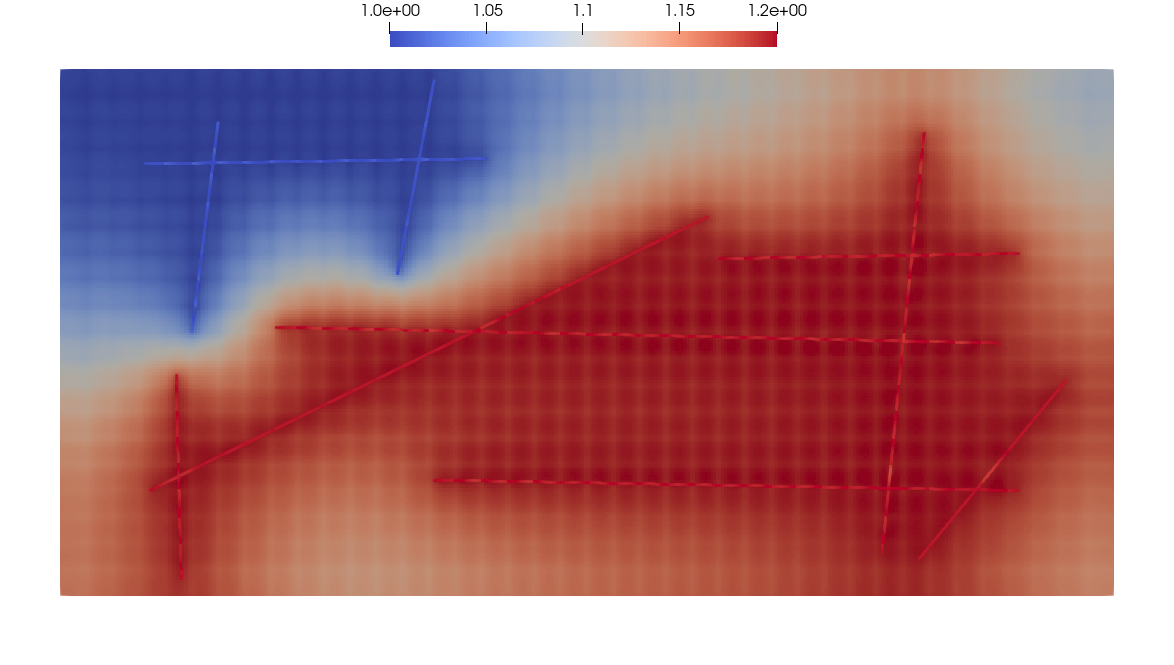}
\includegraphics[width=0.19 \textwidth]{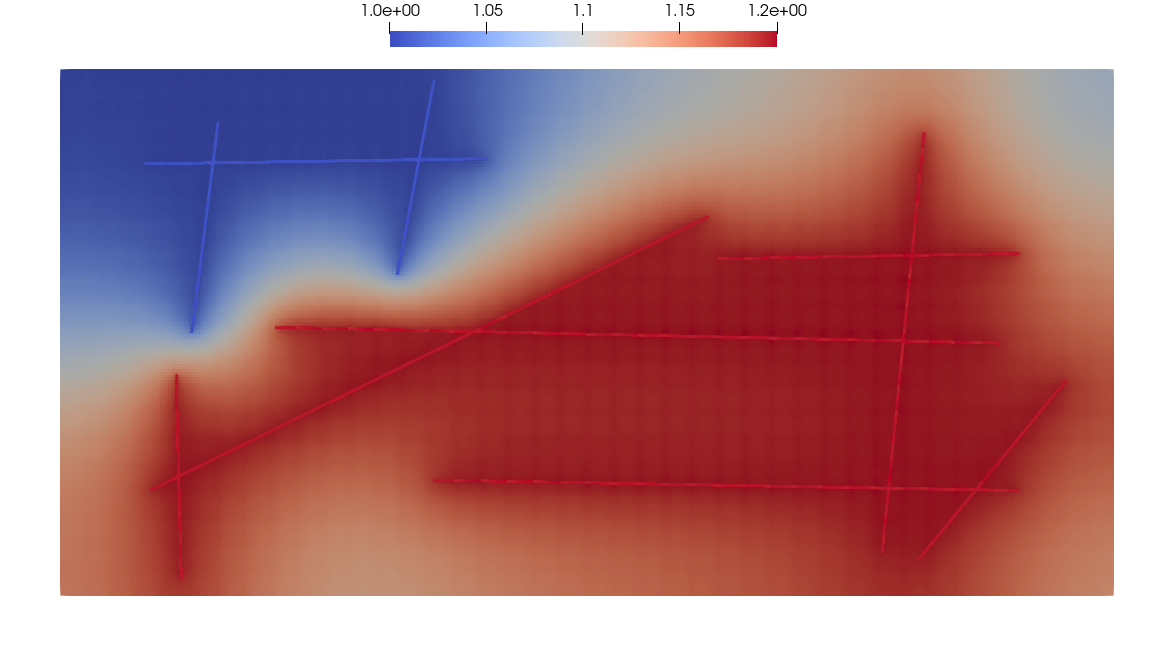}
\includegraphics[width=0.19 \textwidth]{u1024}
\caption{Two-continuum media (\textit{2C}). Multiscale method solution (coarse grid, coupled scheme Im1 with $N_t = 128$) at final time for different number of oversampling layers $K_i^{+,l}$ for $l=2,3,4$ and $5$ (from left to right)}
\label{fig:u2-ms-ov}
\end{figure}

In NLMC approximation, we significantly influence the local domain size used in multiscale basis calculations to the method's accuracy. This study considers a two-continuum test problem to choose an optimal number of oversampling layers in basis calculations. In Table \ref{tab2-ms-ov}, we present the influence of the number of oversampling layers $K_i^{+,l}$ ($l=2,3,4$ and $5$) on the method accuracy for the fine and coarse grid multiscale solutions, $u_{ms}$ and $u_H$ at the final time. 
In Figure \ref{fig:u2-ms-ov},  we present fine grid and coarse grid multiscale solutions at the final time on the first and second rows, respectively. Combining the visual representation of the solution from Figure \ref{fig:u2-ms-ov} and relative errors from Table \ref{tab2-ms-ov}, we can observe how several oversamplings affect the accuracy. Here, we observe that one cannot obtain a solution for the NLMC method by using one oversampling layer in local domain construction. When we use $l=2$ oversampling layers, we have a good approximation of the coarse grid solution. However, two oversampling layers are not enough for good fine-grid resolution of the solution. We should use five oversampling layers to obtain smooth results on the fine grid with a small $e_{ms,1}$ error. However, because we are more interested in the coarse grid model, we can say that the solution is good with less than one \% of error if we use three oversampling layers. We should mention that the number of oversampling layers directly affects the time of multiscale basis calculations. However, the calculations are performed in a parallel way, i.e., for each local domain independently.

\begin{figure}[h!]
\centering
\includegraphics[width=0.32 \textwidth]{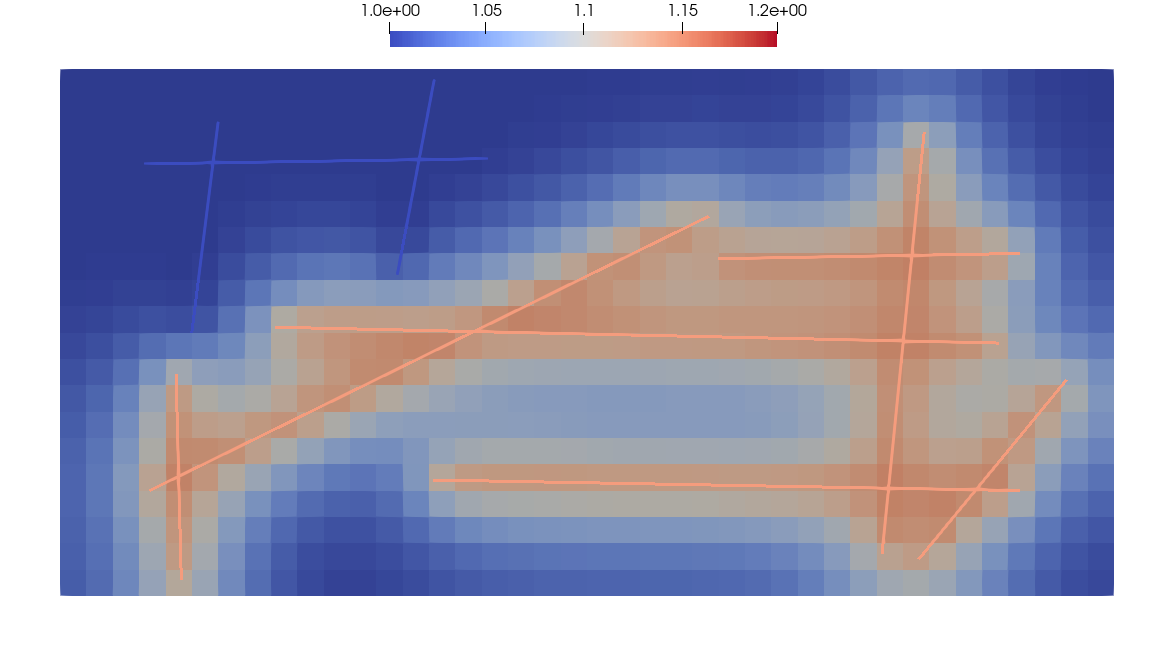}
\includegraphics[width=0.32 \textwidth]{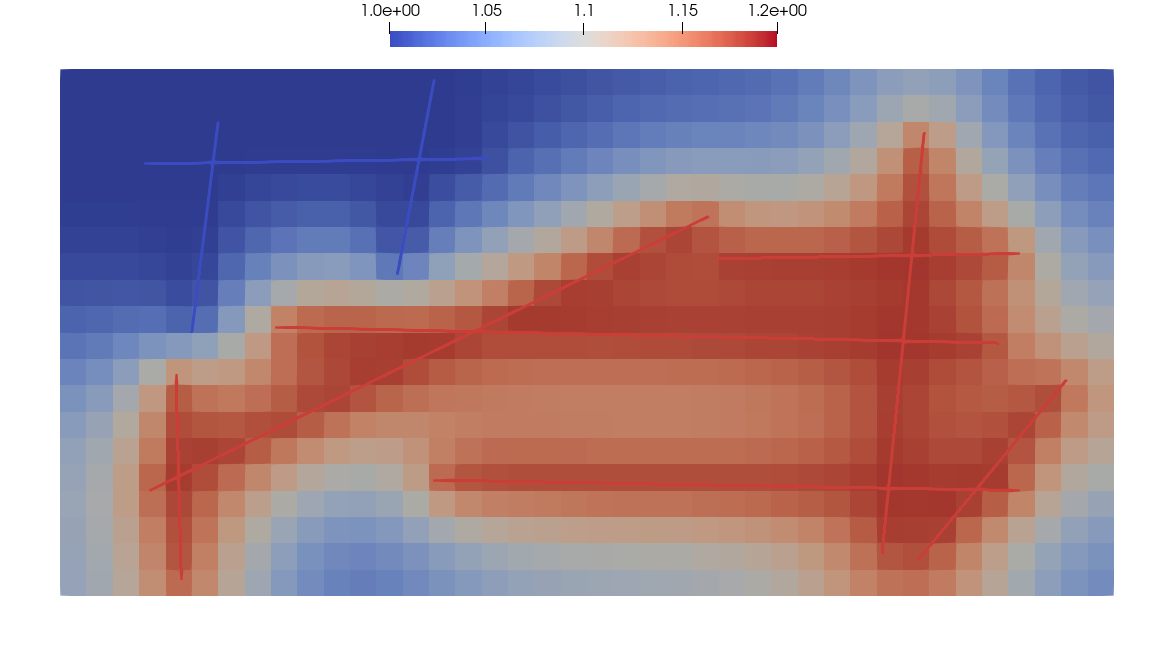}
\includegraphics[width=0.32 \textwidth]{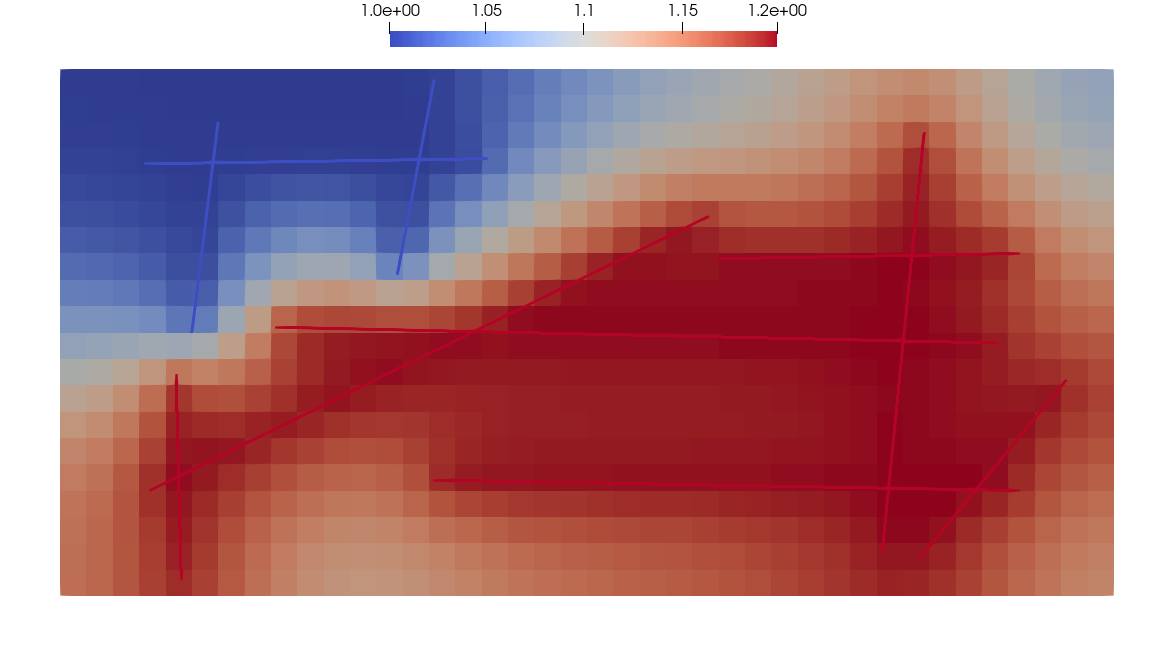}
\caption{Two-continuum media (\textit{2C}). Multiscale method solution (coarse grid, decoupled scheme, ImEx2) for $t = T_{max}/4, T_{max}/2$ and $T_{max}$ (from left to right)}
\label{fig:u2-ms}
\end{figure}

\begin{figure}[h!]
\centering
\includegraphics[width=0.32 \textwidth]{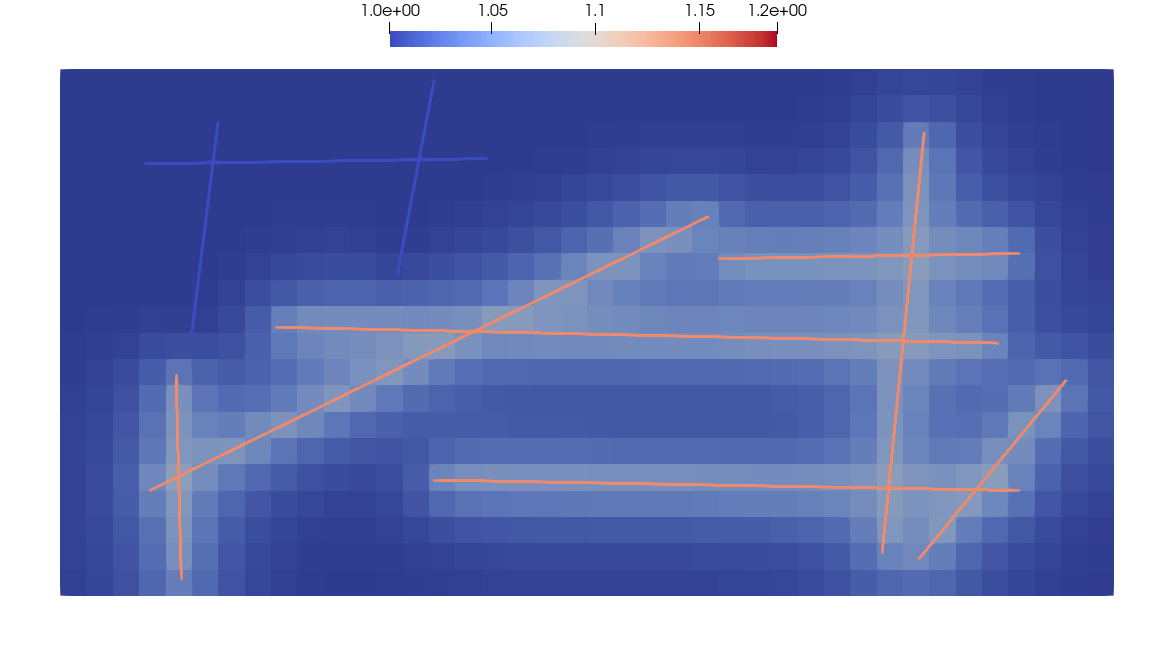}
\includegraphics[width=0.32 \textwidth]{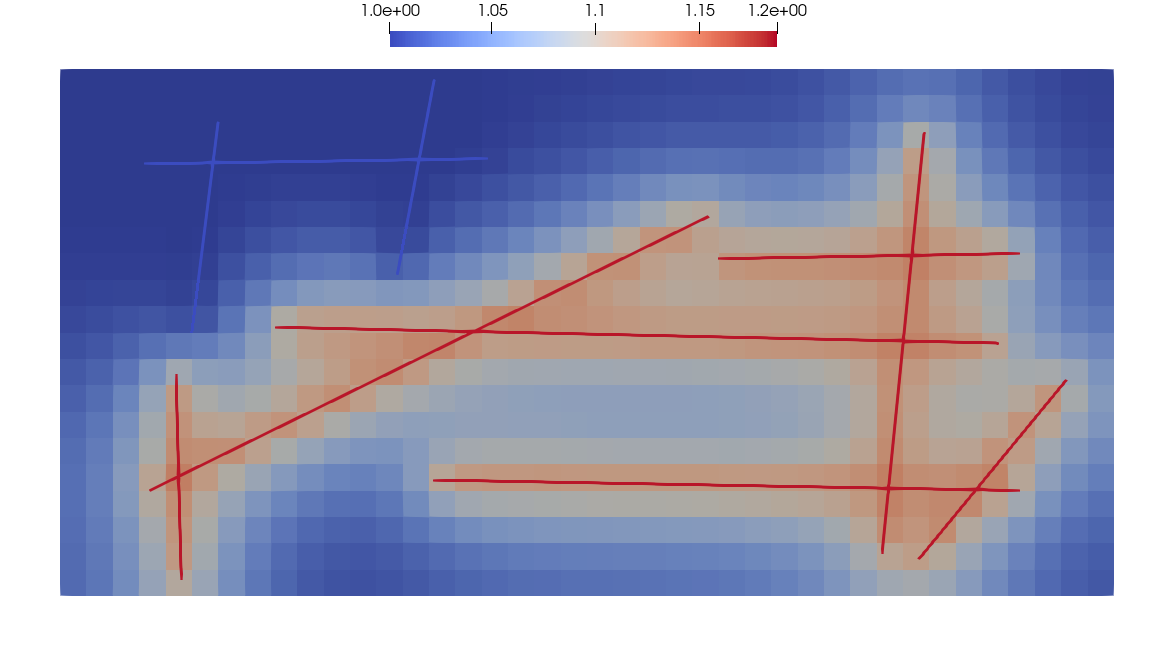}
\includegraphics[width=0.32 \textwidth]{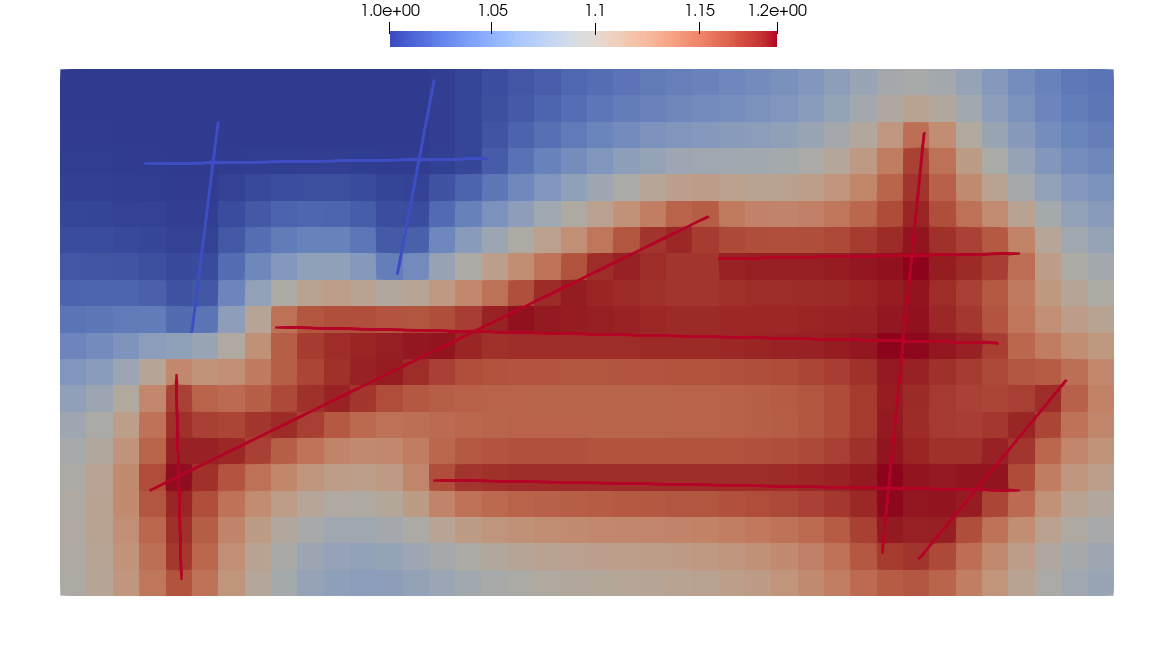}\\
\includegraphics[width=0.32 \textwidth]{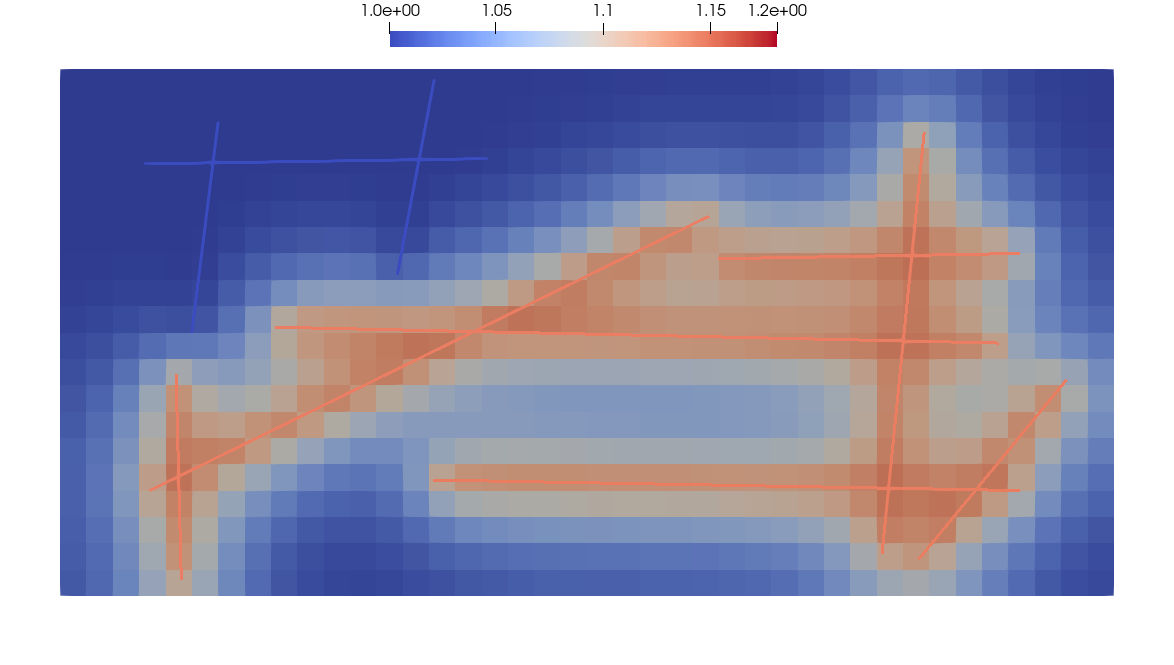}
\includegraphics[width=0.32 \textwidth]{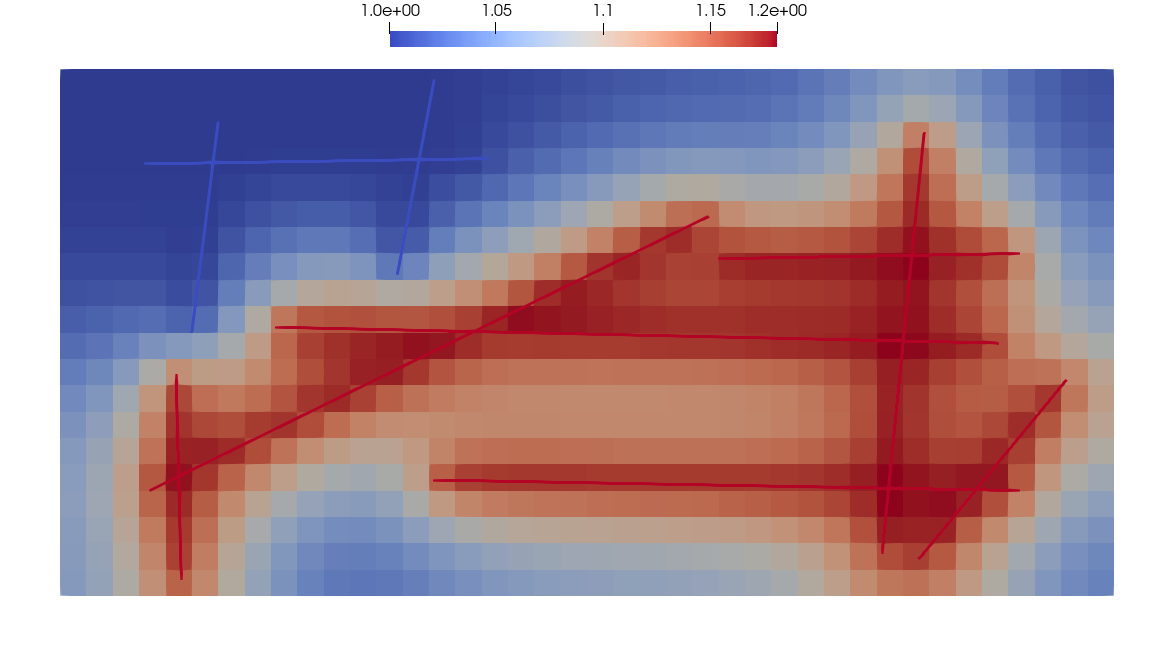}
\includegraphics[width=0.32 \textwidth]{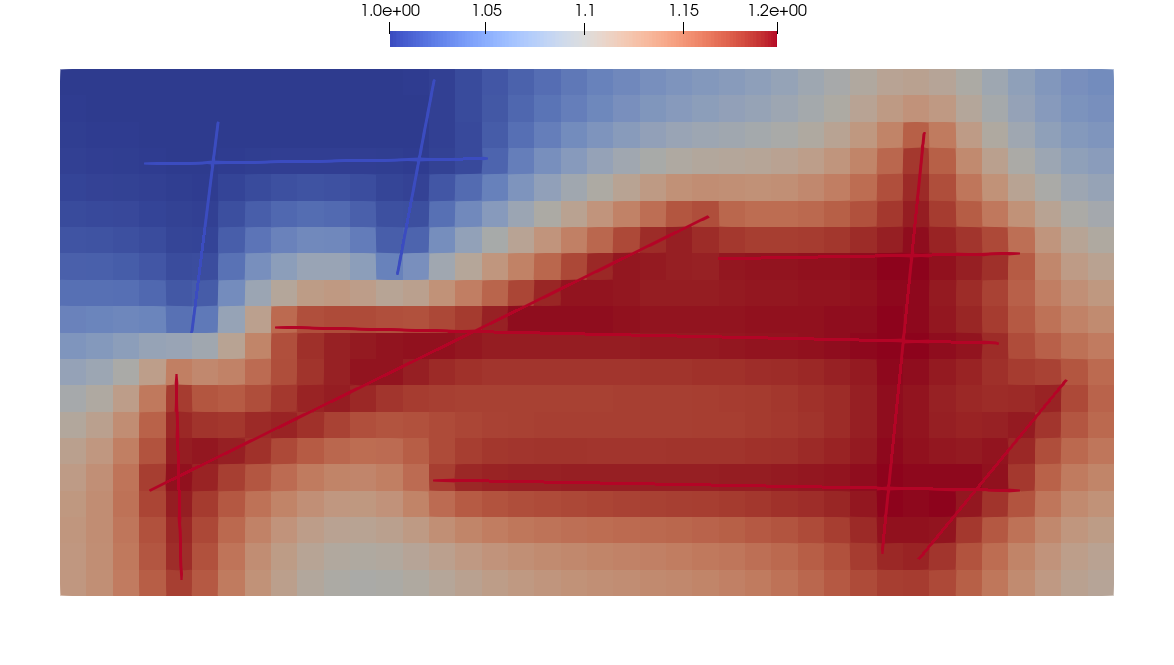}
\caption{Three-continuum media (\textit{3C}). Multiscale method solution (coarse grid, decoupled scheme, ImEx2) for $t = T_{max}/4, T_{max}/2$ and $T_{max}$ (from left to right). First row: fist continuum. Second row: second continuum}
\label{fig:u3-ms}
\end{figure}

Next, in Figures \ref{fig:u2-ms} and \ref{fig:u3-ms}, we depict the solution of the coarse grid system for two and three-continuum test problems, respectively. We use an NLMC approximation with three oversampling layers on a $40\times 20$ coarse grid. The resulting unknowns on the coarse grid are $N_H = 956$ and $N_H = 1756$ for \textit{2C} and \textit{3C} tests in the coupled scheme. The results are shown for the coupled scheme using the two-level  coupled scheme (Ms-Im1) with $N_t=128$ and represented for $n=32, 64$, and $128$.
For the \textit{2C} test in Figure \ref{fig:u2-ms}, we have $e_{H,1} = 0.12, 0.8$ and $0.04$ for $t = T_{max}/4, T_{max}/2$ and $T_{max}$, respectively. 
For the \textit{3C} test in Figure \ref{fig:u3-ms}, we have $e_{H,1} = 0.09, 0.08$ and $0.05$ for $t = T_{max}/4, T_{max}/2$ and $T_{max}$, respectively. 
Solution time is 0.9 sec for \textit{2C} and 2.8 for \textit{3C} using $N_t = 128$ on coarse grid with $N_H = 956$ and $N_H = 1756$ for \textit{2C} and \textit{3C} tests, respectively. 
We have a speedy solution compared to the fine grid system using the same scheme (Im1). On the fine grid, we have 25.3 sec for  \textit{2C} and 53.6 for \textit{3C} for $N_t = 128$.

\begin{table}[h!]
\centering
\begin{tabular}{|c|cc|}
\hline
 \multicolumn{3}{|c|}{Implicit schemes + Ms} \\ 
\hline
\multirow{ 2}{*}{$N_t$}
&  & \\
& $e_{ms,1}$ & $e_{H,1}$ \\
\hline
\multicolumn{3}{|c|}{Ms-Im1} \\
\hline	
4 	
& 1.2637 & 0.9429	\\
8 	
& 0.9254 & 0.4864	\\
16 	
& 0.8098 & 0.2491	\\
32 	
& 0.7769 & 0.1289	\\
64 	
& 0.7681 & 0.0691	\\
128 
& 0.7658 & 0.0401		\\
\hline
\multicolumn{3}{|c|}{Ms-Im2-CN} \\
\hline
4 	
& 1.9531 & 1.9128 \\
8 	
& 1.5967 & 1.4458 \\
16 	
& 1.5556 & 1.3494 \\
32 	
& 1.5548 & 1.3282 \\
64 	
& 1.5586 & 1.3234 \\
128 
& 1.5615 & 1.3223 \\
\hline
\multicolumn{3}{|c|}{Ms-Im2-BDF} \\
\hline
4 	
& 4.1302 & 3.7634	\\
8 
& 1.6554 & 1.3504	\\
16 	
& 0.9912 & 0.5817	\\
32 	
& 0.8223 & 0.2801	\\
64 	
& 0.7799 & 0.1431	\\
128 
& 0.7691 & 0.0779\\
\hline
\end{tabular}
\ \ \ \
\begin{tabular}{|c|cc|cc|cc|}
\hline
 \multicolumn{7}{|c|}{Implicit-Explicit schemes + Ms} \\ 
\hline
\multirow{ 2}{*}{$N_t$}
& \multicolumn{2}{|c|}{L} 
& \multicolumn{2}{|c|}{D}
& \multicolumn{2}{|c|}{U}
\\
& $e_{ms,1}$ & $e_{H,1}$ 
& $e_{ms,1}$ & $e_{H,1}$  
& $e_{ms,1}$ & $e_{H,1}$   \\
\hline
 \multicolumn{7}{|c|}{Ms-ImEx1} \\ 
\hline 
4 
& 2.9032 & 2.5825 
& 3.1320 & 2.8103
& 1.4203 & 1.1282\\
8 
& 1.4942 & 1.1840
& 1.5961 & 1.2948
& 0.9791 & 0.5725\\
16 
& 0.9794 & 0.5651
& 1.0118 & 0.6126
& 0.8250 & 0.2897\\
32 
& 0.8218 & 0.2790
& 0.8304 & 0.3002
& 0.7808 & 0.1485\\
64 
& 0.7798 & 0.1424
& 0.7819 & 0.1522
& 0.7691 & 0.0786\\
128 
& 0.7689 & 0.0761
& 0.7695 & 0.0808
& 0.7661 & 0.0446\\
\hline
 \multicolumn{7}{|c|}{Ms-ImEx2-CNAB} \\ 
\hline 
4 
& 4.7519 & 4.0709
& 9.8183 & 8.4932
& 2.0231 & 1.6289\\
8 
& 1.1839 & 0.7780
& 3.6943 & 3.0293
& 0.8564 & 0.3135\\
16 
& 0.8151 & 0.2601
& 0.8531 & 0.3401
& 0.7695 & 0.0749\\
32 
& 0.7770 & 0.1277
& 0.7806 & 0.1456
& 0.7652 & 0.0283\\
64 
& 0.7680 & 0.0684
& 0.7689 & 0.0770
& 0.7649 & 0.0161\\
128 
& 0.7658 & 0.0409
& 0.7661 & 0.0449
& 0.7648 & 0.0154\\
\hline
 \multicolumn{7}{|c|}{Ms-ImEx2-SBDF} \\ 
\hline 
4 
& 6.0360 & 5.4880
& 6.7632 & 6.2137
& 2.2684 & 1.9512\\
8 
& 1.6974 & 1.3915
& 1.8356 & 1.5321
& 0.9277 & 0.4841\\
16
& 0.9805 & 0.5657
& 1.0122 & 0.6114
& 0.7993 & 0.2168\\
32 
& 0.8179 & 0.2689
& 0.8257 & 0.2886
& 0.7750 & 0.1190\\
64 
& 0.7785 & 0.1364
& 0.7804 & 0.1455
& 0.7680 & 0.0680 \\
128 
& 0.7687 & 0.0743
& 0.7692 & 0.0786
& 0.7659 & 0.0422\\
\hline
\end{tabular}
\caption{Two-continuum media (\textit{2C}). Relative errors in \% for coupled (Implicit) and decoupled (Implicit-Explicit, ImEx) schemes with multiscale approximation}
\label{tab2-ms}
\end{table}

\begin{table}[h!]
\centering
\begin{tabular}{|c|cc|}
\hline
 \multicolumn{3}{|c|}{Implicit schemes + Ms} \\ 
\hline
\multirow{ 2}{*}{$N_t$}
&   &\\
& $e_{ms,1}$ & $e_{H,1}$ \\
\hline
\multicolumn{3}{|c|}{Ms-Im1} \\
\hline	
4 & 1.3455 & 1.0230 \\
8 & 1.0043 & 0.5469 \\
16 & 0.8797 & 0.2858 \\
32 & 0.8426 & 0.1497 \\
64 & 0.8325 & 0.0814 \\
128 & 0.8299 & 0.0490 \\
\hline
\multicolumn{3}{|c|}{Ms-Im2-CN} \\
\hline
4 & 2.7302 & 2.9878 \\
8 & 3.3793 & 3.4420 \\
16 & 3.5427 & 3.5034 \\
32 & 3.5863 & 3.5040 \\
64 & 3.6011 & 3.4994 \\
128 & 3.6070 & 3.4961 \\
\hline
\multicolumn{3}{|c|}{Ms-Im2-BDF} \\
\hline
4 & 4.7068 & 4.4402 \\
8 & 2.0320 & 1.7732 \\
16 & 1.1619 & 0.7808 \\
32 & 0.9174 & 0.3795 \\
64 & 0.8525 & 0.1955 \\
128 & 0.8356 & 0.1077 \\
\hline
\end{tabular}
\ \ \ \
\begin{tabular}{|c|cc|cc|cc|}
\hline
 \multicolumn{7}{|c|}{Implicit-Explicit schemes + Ms} \\ 
\hline
\multirow{ 2}{*}{$N_t$}
& \multicolumn{2}{|c|}{L} 
& \multicolumn{2}{|c|}{D}
& \multicolumn{2}{|c|}{U}\\
& $e_{ms,1}$ & $e_{H,1}$ 
& $e_{ms,1}$ & $e_{H,1}$ 
& $e_{ms,1}$ & $e_{H,1}$   \\
\hline
 \multicolumn{7}{|c|}{Ms-ImEx1} \\ 
\hline 
4 
& 5.0779 & 4.7829
& 5.4451 & 5.1400
& 2.0967 & 1.8501\\
8 
& 2.6295 & 2.3841
& 2.9927 & 2.7472
& 1.3463 & 1.0186\\
16 
& 1.4840 & 1.1777
& 1.6917 & 1.4104
& 1.0015 & 0.5414\\
32 
& 1.0311 & 0.5886
& 1.1169 & 0.7179
& 0.8790 & 0.2845\\
64 
& 0.8849 & 0.2997
& 0.9123 & 0.3678
& 0.8428 & 0.1515\\
128 
& 0.8441 & 0.1575
& 0.8518 & 0.1924
& 0.8328 & 0.0848 \\
\hline
 \multicolumn{7}{|c|}{Ms-ImEx2-CNAB} \\ 
\hline 
4 
& 7.9192 & 7.3925
& 10.8835 & 10.0329
& 3.4384 & 3.1184\\
8 
& 1.0533 & 0.5766
& 3.4144 & 2.8888
& 1.5138 & 1.1977\\
16 
& 0.8407 & 0.1345
& 0.9445 & 0.4295
& 1.0113 & 0.5502\\
32 
& 0.8315 & 0.0658
& 0.8537 & 0.1997
& 0.8728 & 0.2596\\
64 
& 0.8295 & 0.0405
& 0.8355 & 0.1067
& 0.8386 & 0.1213\\
128 
& 0.8291 & 0.0313
& 0.8308 & 0.0634
& 0.8307 & 0.0555\\
\hline
 \multicolumn{7}{|c|}{Ms-ImEx2-SBDF} \\ 
\hline 
4 
& 10.7971 & 10.2690
& 10.4765 & 9.9767
& 2.3070 & 2.0212\\
8 
& 1.9309 & 1.6661
& 2.5572 & 2.3095
& 1.1612 & 0.7747\\
16 
& 1.0026 & 0.5421
& 1.2120 & 0.8478
& 0.8713 & 0.2571\\
32 
& 0.8676 & 0.2488
& 0.9243 & 0.3948
& 0.8356 & 0.1038\\
64 
& 0.8388 & 0.1278
& 0.8535 & 0.1995
& 0.8298 & 0.0449\\
128 
& 0.8318 & 0.0735
& 0.8358 & 0.1088
& 0.8288 & 0.0251\\
\hline
\end{tabular}
\caption{Three-continuum media (\textit{3C}). Relative errors in \% for coupled (Implicit, Im) and decoupled (Implicit-Explicit, ImEx) schemes with multiscale approximation}
\label{tab3-ms}
\end{table}

Tables \ref{tab2-ms} and \ref{tab3-ms} show multiscale method errors at final time for two and three-continuum test problems, respectively.
We observe the convergence behavior of the proposed Implicit and Implicit-Explicit schemes for multiscale approximation from the result. 
We observe excellent performance on the coarse grid approximation using the NLMC method for the first-order coupled and decoupled schemes. We can use fewer time steps and a coarse grid formulation to produce an accurate solution. For example, we have near 0.5 \% of $e_{H,1}$ error using coupled Ms-Im1 scheme and decoupled version Ms-imEx1 using only eight time steps for \textit{2C} test (see Table \ref{tab2-ms}).
However, for both two and three-continuum tests, we observe a lousy performance of the second-order coupled scheme based on the Crank-Nicolson approximation (Ms-Im2-CN), where we have fundamental error $e_{H,1} = 1.3$ \% for \textit{2C} and  $e_{H,1} = 3.5$ \% for \textit{3C} for $N_t=128$. However, we can obtain good results using decoupled implicit-explicit variation (Ms-Im2-CNAB-D) with  $e_{H,1} = 0.04$ \% for \textit{2C} and  $e_{H,1} = 0.06$ \% for \textit{3C} for $N_t=128$. Furthermore, we can obtain a perfect solution using only eight-time steps with less than one percent of $e_{H,1}$ error using the Ms-Im2-CNAB-L scheme.
For the coupled Ms-Im2-BDF scheme and decoupled Ms-ImEx2-SBDF schemes, we obtain a little bit better results in decoupled case with near 0.5 \% of $e_{H,1}$ error using $N_t=8$ in   Ms-ImEx2-SBDF and using $N_t=16$ in Ms-Im2-BDF in \textit{2C} test. In \textit{3C} test, we observe a similar behavior with  near 0.8 \% of $e_{H,1}$ error using $N_t=8$ in   Ms-ImEx2-SBDF and using $N_t=16$ in Ms-Im2-BDF. 
By comparing the L, D, and U versions of schemes, we see that the U-scheme works better for  Ms-ImEx1 and Ms-ImEx2-SBDF schemes.

\begin{table}[h!]
\centering
\begin{tabular}{|c|cc|}
\hline
 \multicolumn{3}{|c|}{Implicit + Ms} \\ 
\hline
$N_t$ 
& time$_{tot}$ & $N_{AvIt}$ \\
\hline
\multicolumn{3}{|c|}{Ms-Im1} \\
\hline
4 	& 0.057 & 15.0 \\
8 	& 0.094 & 14.0 \\
16 	& 0.160 & 13.9 \\
32 	& 0.252 & 14.0 \\
64 	& 0.460 & 13.8 \\
128 & 0.902 & 13.4 \\
\hline
\multicolumn{3}{|c|}{Ms-Im2-CN} \\
\hline
4 	& 0.047 & 14.0  \\
8 	& 0.070 & 13.7  \\
16 	& 0.119 & 14.0  \\
32 	& 0.214 & 13.6  \\
64 	& 0.403 & 13.3  \\
128 & 0.752 & 13.0  \\
\hline
\multicolumn{3}{|c|}{Ms-Im2-BDF} \\
\hline
4 	& 0.047 & 15.0 \\
8 	& 0.071 & 14.0 \\
16 	& 0.127 & 13.9 \\
32 	& 0.215 & 13.8 \\
64 	& 0.402 & 13.6 \\
128 & 0.754 & 13.0 \\
\hline
\end{tabular}
\ \ \ \
\begin{tabular}{|c|c|cc|cc|}
\hline
 \multicolumn{6}{|c|}{Implicit-Explicit + Ms} \\ 
\hline
$N_t$ 
& time$_{tot}$ 
& time$_{tot}^m$ & $N_{AvIt}^m$ 
& time$_{tot}^f$ & $N_{AvIt}^f$   \\
\hline
 \multicolumn{6}{|c|}{Ms-ImEx1} \\ 
\hline 
4 	& 0.004 & 0.003 & 3.0 & 0.0007 & 14.0 \\
8 	& 0.009 & 0.007 & 3.0 & 0.0016 & 14.0 \\
16 	& 0.019 & 0.016 & 3.0 & 0.0032 & 14.0 \\
32 	& 0.030 & 0.023 & 2.0 & 0.0066 & 14.0 \\
64 	& 0.058 & 0.045 & 2.0 & 0.0129 & 13.9 \\
128 & 0.120 & 0.095 & 2.0 & 0.0259 & 13.6 \\
\hline
 \multicolumn{6}{|c|}{Ms-ImEx2-CNAB} \\
\hline 
4 	& 0.004 & 0.003 & 3.0 & 0.0007 & 14.0 \\
8 	& 0.009 & 0.008 & 3.0 & 0.0016 & 14.0 \\
16 	& 0.017 & 0.013 & 2.0 & 0.0037 & 14.0 \\
32 	& 0.036 & 0.028 & 2.0 & 0.0075 & 13.9 \\
64 	& 0.072 & 0.057 & 2.0 & 0.0151 & 13.8 \\
128 & 0.151 & 0.122 & 2.0 & 0.0289 & 12.4 \\
\hline
 \multicolumn{6}{|c|}{Ms-ImEx2-SBDF} \\
\hline 
4 	& 0.004 & 0.003 & 3.0 & 0.0007 & 14.0 \\
8 	& 0.009 & 0.008 & 3.0 & 0.0017 & 14.0 \\
16 	& 0.017 & 0.013 & 2.0 & 0.0036 & 14.0 \\
32 	& 0.037 & 0.029 & 2.0 & 0.0075 & 14.0 \\
64 	& 0.074 & 0.058 & 2.0 & 0.0155 & 13.8 \\
128 & 0.146 & 0.116 & 2.0 & 0.0299 & 13.2 \\
\hline
\end{tabular}
\caption{Two-continuum media (\textit{2C}). Time of the solution and the average number of iterations.  Coupled (Implicit, Im) and decoupled (Implicit-Explicit, ImEx)}
\label{tab2-ms-t}
\end{table}

\begin{table}[h!]
\centering
\begin{tabular}{|c|cc|}
\hline
 \multicolumn{3}{|c|}{Implicit + Ms} \\ 
\hline
$N_t$ 
& time$_{tot}$ & $N_{AvIt}$ \\
\hline
\multicolumn{3}{|c|}{Ms-Im1} \\
\hline
4 	& 0.391 & 15.0  \\
8 	& 0.340 & 14.0  \\
16 	& 0.531 & 14.0  \\
32 	& 0.839 & 13.9  \\
64 	& 1.513 & 13.6  \\
128 & 2.800 & 13.1  \\
\hline
\multicolumn{3}{|c|}{Ms-Im2-CN} \\
\hline
4 	& 0.240 & 14.0  \\
8 	& 0.321 & 14.0  \\
16 	& 0.483 & 14.0  \\
32 	& 0.789 & 13.6  \\
64 	& 1.427 & 13.2  \\
128 & 2.438 & 12.0  \\
\hline
\multicolumn{3}{|c|}{Ms-Im2-BDF} \\
\hline
4 	& 0.241 & 15.0  \\
8 	& 0.321 & 14.0  \\
16 	& 0.485 & 14.0  \\
32 	& 0.820 & 13.8  \\
64 	& 1.416 & 13.4  \\
128 & 2.596 & 13.0  \\
\hline
\end{tabular}
\ \ \ \
\begin{tabular}{|c|c|cc|cc|cc|}
\hline
 \multicolumn{8}{|c|}{Implicit-Explicit + Ms} \\ 
\hline
$N_t$ 
& time$_{tot}$ 
& time$_{tot}^1$ & $N_{AvIt}^1$ 
& time$_{tot}^2$ & $N_{AvIt}^2$ 
& time$_{tot}^f$ & $N_{AvIt}^f$   \\
\hline
 \multicolumn{8}{|c|}{Ms-ImEx1} \\ 
\hline 
4 	& 0.006 & 0.002 & 1.0 & 0.004 & 3.0 & 0.0008 & 14.0 \\
8 	& 0.013 & 0.004 & 1.0 & 0.008 & 3.0 & 0.0016 & 14.0 \\
16 	& 0.026 & 0.008 & 1.0 & 0.014 & 2.6 & 0.0032 & 14.0 \\
32 	& 0.047 & 0.016 & 1.0 & 0.024 & 2.0 & 0.0065 & 14.0 \\
64 	& 0.094 & 0.032 & 1.0 & 0.048 & 2.0 & 0.0129 & 13.9 \\
128 & 0.188 & 0.064 & 1.0 & 0.097 & 2.0 & 0.0258 & 13.7 \\
\hline
 \multicolumn{8}{|c|}{Ms-ImEx2-CNAB} \\
\hline 
4 	& 0.006 & 0.002 & 1.0 & 0.003 & 3.0 & 0.0007 & 14.0 \\
8 	& 0.015 & 0.005 & 1.0 & 0.007 & 2.6 & 0.0017 & 14.0 \\
16 	& 0.030 & 0.011 & 1.0 & 0.015 & 2.0 & 0.0036 & 14.0 \\
32 	& 0.047 & 0.016 & 1.0 & 0.024 & 2.0 & 0.0065 & 13.9 \\
64	& 0.094 & 0.032 & 1.0 & 0.048 & 2.0 & 0.0129 & 13.7 \\
128 & 0.188 & 0.064 & 1.0 & 0.097 & 2.0 & 0.0258 & 12.3 \\
\hline
 \multicolumn{8}{|c|}{Ms-ImEx2-SBDF} \\
\hline 
4	& 0.006 & 0.002 & 1.0 & 0.003 & 3.0 & 0.0007 & 14.0 \\
8 	& 0.016 & 0.005 & 1.0 & 0.008 & 3.0 & 0.0017 & 14.0 \\
16 	& 0.029 & 0.011 & 1.0 & 0.014 & 2.0 & 0.0036 & 14.0 \\
32 	& 0.060 & 0.022 & 1.0 & 0.030 & 2.0 & 0.0075 & 14.0 \\
64 	& 0.138 & 0.051 & 1.0 & 0.069 & 2.0 & 0.0172 & 13.8 \\
128 & 0.267 & 0.101 & 1.0 & 0.133 & 2.0 & 0.0323 & 13.3 \\
\hline
\end{tabular}
\caption{Three-continuum media (\textit{3C}). Time of the solution and the average number of iterations. Coupled (Implicit, Im) and decoupled (Implicit-Explicit, ImEx)}
\label{tab3-ms-t}
\end{table}

Finally, we discuss the computational performance of the presented combination of the time and space approximation schemes, where we have a substantial computational time reduction based on the coarse grid calculations using the NLMC method. Moreover, we can decouple a system and use a more significant time step size using presented Implicit-Explicit time approximation schemes. 
Similarly to the fine grid calculations, we present the total time of computations in seconds with an average number of iterations for the preconditioned conjugate gradient method in Tables \ref{tab2-ms-t} and \ref{tab3-ms-t}.  For the decoupled schemes, the results are presented for U-scheme.
For the coupled first-order scheme (Im1), we have the following:
\begin{itemize}
\item 
Two-continuum media (\textit{2C}): 
\begin{itemize}
\item Reference solution with $N_h = 81474$ (fine grid) with $N_t = 1024$ (Im1): time$_{tot} = 103.76$ sec. and $N_{AverIt} = 73.98$. 
\item Fine-scale solution with $N_h = 81474$ (fine grid) with $N_t = 128$ (Im1): time$_{tot} = 25.31$ sec. and $N_{AverIt} = 146.5$. 
\item Multiscale solution with $N_h = 956$ (coarse grid) with $N_t = 128$ (Im1): 
time$_{tot} = 0.90$ sec. and $N_{AverIt} = 13.4$. 
\end{itemize}
\item 
Three-continuum media (\textit{3C}): 
\begin{itemize}
\item Reference solution with $N_h = 161474$ (fine grid) with $N_t = 1024$ (Im1): 
time$_{tot} = 234.37$ sec. and $N_{AverIt} = 73.90$
\item Fine-scale solution with $N_h = 161474$ (fine grid) with $N_t = 128$ (Im1): time$_{tot} = 53.68$ sec. and $N_{AverIt} = 140.1$. 
\item Multiscale solution with $N_h = 1756$ (coarse grid) with $N_t = 128$ (Im1): 
time$_{tot} = 2.8$ sec. and $N_{AverIt} = 13.1$. 
\end{itemize}
\end{itemize}
By constructing the accurate coarse-scale approximation, we reduce the size of the system significantly with 28.1 and 19.2 times faster calculations compared with a fine-scale solution with the same time step size ($N_t = 128$).

Moreover, from the Tables \ref{tab2-ms-t} and \ref{tab3-ms-t}, we observe a significant influence of the decoupling to the time of computations. 
For $N_t = 128$, we have  for the two-continuum test (\textit{2C}): 
\begin{itemize}
\item Fine-scale solution with $N_h = 81474$ (fine grid): 
\begin{itemize}
\item Coupled scheme (Im2-BDF): time$_{tot} = 20.02$ sec. with $N_{AverIt} = 129.7$. 
\item Decoupled scheme (ImEx2-SBDF): time$_{tot} = 7.52$ sec. with $N_{AverIt}^m = 29.0$ and $N_{AverIt}^f = 53.0$. 
\end{itemize}
\item Multiscale solution with $N_h = 956$ (coarse grid):
\begin{itemize}
\item Coupled scheme (Ms-Im2-BDF): time$_{tot} = 0.75$ sec. with $N_{AverIt} = 13.0$. 
\item Decoupled scheme (Ms-ImEx2-SBDF): time$_{tot} = 0.15$ sec. with $N_{AverIt}^m = 2.0$ and $N_{AverIt}^f = 13.2$.
\end{itemize}
\end{itemize}
In a second-order scheme with $(\mu, \sigma) = (1.5, 0)$, we obtain a 2.7 times faster solution on the fine grid for the decoupled system and five times faster solution on the coarse grid with $0.08$ \% of $e_{H,1}$ error compared with a reference solution.
We also observe a significant reduction in the number of iterations for the continuum in $\Omega$, significantly influencing the solution time. 

For the three-continuum media (\textit{3C}): 
\begin{itemize}
\item Fine-scale solution with $N_h = 161474$ (fine grid):
\begin{itemize}
\item Coupled scheme (Im2-BDF): time$_{tot} = 49.03$ sec. with $N_{AverIt} = 125.1$. 
\item Decoupled scheme (ImEx2-SBDF): time$_{tot} = 10.39$ sec. with $N_{AverIt}^1 = 3.0$, $N_{AverIt}^1 = 29.0$ and $N_{AverIt}^f = 52.0$. 
\end{itemize}
\item Multiscale solution with $N_h = 1756$ (coarse grid):
\begin{itemize}
\item Coupled scheme (Ms-Im2-BDF): time$_{tot} = 2.60$ sec. with $N_{AverIt} = 13.0$. 
\item Decoupled scheme (Ms-ImEx2-SBDF): time$_{tot} = 0.27$ sec. with $N_{AverIt}^1 = 1.0$, $N_{AverIt}^1 = 2.0$ and $N_{AverIt}^f = 13.3$. 
\end{itemize}
\end{itemize}
In the second-order scheme for \textit{3C}, we obtain a 4.7 times faster solution on the fine grid for the decoupled system and a 9.6 times faster solution on the coarse grid with $0.1$ \% of $e_{H,1}$ error compared with a reference solution. 
We observe a substantial computational time reduction by combining Implicit-Explicit schemes and accurate multiscale approximation by the NLMC method. We can solve a system in less than a second compared with 103.76 sec for two-continuum and  234.37 sec for three-continuum problems.

\section{Conclusion}

We presented efficient decoupled schemes for multicontinuum problems in porous media. The decoupled schemes are constructed in a general way using Implicit-explicit time approximation schemes. An additive representation of the operator is used to decouple equations for each continuum. We used a finite-volume method for fine-scale approximation, and the nonlocal multicontinuum (NLMC) method was used to construct an accurate and physically meaningful coarse-scale approximation. The NLMC method is based on defining the macroscale variables on the coarse grid for the general multicontinua problem in porous media. We investigated the stability of the two and three-level time approximation schemes. 
We observed that combining decoupling techniques with multiscale approximation leads to developing an efficient solver for multicontinuum problems. An extensive numerical investigation was given for two and three continuum problems that describe flow processes in fractured porous media. By decoupled calculations, we obtain 3-4 times faster calculations for two-continuum media and 5-8 times faster calculations for three-continuum media than a regular implicit approximation leading to the coupled system on the fine grid. Furthermore, by combining two techniques (nonlocal multicontimuum method and continuum decoupling), the simulation time becomes 140-210 times faster for two-continuum and 180-280 times faster for three continuum media compared with a fine-scale coupled schemes of the same order of time approximation with similar time step size.

\bibliographystyle{plain}
\bibliography{lit}

\begin{thebibliography}{10}

\bibitem{afanas2013unconditionally}
Nadezhda Afanas’eva, Petr Vabishchevich, and Maria Vasilyeva.
\newblock Unconditionally stable schemes for convection-diffusion problems.
\newblock {\em Russian Mathematics}, 57:1--11, 2013.

\bibitem{akkutlu2017multiscale}
I~Yucel Akkutlu, Yalchin Efendiev, Maria Vasilyeva, and Yuhe Wang.
\newblock Multiscale model reduction for shale gas transport in a coupled
  discrete fracture and dual-continuum porous media.
\newblock {\em Journal of Natural Gas Science and Engineering}, 2017.

\bibitem{akkutlu2018multiscale}
I~Yucel Akkutlu, Yalchin Efendiev, Maria Vasilyeva, and Yuhe Wang.
\newblock Multiscale model reduction for shale gas transport in poroelastic
  fractured media.
\newblock {\em Journal of Computational Physics}, 353:356--376, 2018.

\bibitem{akkutlu2012multiscale}
I~Yucel Akkutlu, Ebrahim Fathi, et~al.
\newblock Multiscale gas transport in shales with local kerogen
  heterogeneities.
\newblock {\em SPE journal}, 17(04):1--002, 2012.

\bibitem{akkutlu2015multiscale}
IY~Akkutlu, Yalchin Efendiev, and Maria Vasilyeva.
\newblock Multiscale model reduction for shale gas transport in fractured
  media.
\newblock {\em Computational Geosciences}, pages 1--21, 2015.

\bibitem{arbogast1990derivation}
Todd Arbogast, Jim Douglas, Jr, and Ulrich Hornung.
\newblock Derivation of the double porosity model of single phase flow via
  homogenization theory.
\newblock {\em SIAM Journal on Mathematical Analysis}, 21(4):823--836, 1990.

\bibitem{ascher1995implicit}
Uri~M Ascher, Steven~J Ruuth, and Brian~TR Wetton.
\newblock Implicit-explicit methods for time-dependent partial differential
  equations.
\newblock {\em SIAM Journal on Numerical Analysis}, 32(3):797--823, 1995.

\bibitem{balay2019petsc}
Satish Balay, Shrirang Abhyankar, Mark Adams, Jed Brown, Peter Brune, Kris
  Buschelman, Lisandro Dalcin, Alp Dener, Victor Eijkhout, W~Gropp, et~al.
\newblock Petsc users manual.
\newblock 2019.

\bibitem{barenblatt1960basic}
GI~Barenblatt, Iu~P Zheltov, and IN~Kochina.
\newblock Basic concepts in the theory of seepage of homogeneous liquids in
  fissured rocks [strata].
\newblock {\em Journal of applied mathematics and mechanics}, 24(5):1286--1303,
  1960.

\bibitem{CELV2015}
E.~T. Chung, Y.~Efendiev, G.~Li, and M.~Vasilyeva.
\newblock Generalized multiscale finite element method for problems in
  perforated heterogeneous domains.
\newblock {\em Applicable Analysis}, 255:1--15, 2015.

\bibitem{chung2016adaptive}
Eric Chung, Yalchin Efendiev, and Thomas~Y Hou.
\newblock Adaptive multiscale model reduction with generalized multiscale
  finite element methods.
\newblock {\em Journal of Computational Physics}, 320:69--95, 2016.

\bibitem{chung2017coupling}
Eric~T Chung, Yalchin Efendiev, Tat Leung, and Maria Vasilyeva.
\newblock Coupling of multiscale and multi-continuum approaches.
\newblock {\em GEM-International Journal on Geomathematics}, 8(1):9--41, 2017.

\bibitem{chung2017constraint}
Eric~T Chung, Yalchin Efendiev, and Wing~Tat Leung.
\newblock Constraint energy minimizing generalized multiscale finite element
  method.
\newblock {\em Computer Methods in Applied Mechanics and Engineering},
  339:298--319, 2018.

\bibitem{chung2021contrast}
Eric~T Chung, Yalchin Efendiev, Wing~Tat Leung, and Wenyuan Li.
\newblock Contrast-independent, partially-explicit time discretizations for
  nonlinear multiscale problems.
\newblock {\em Mathematics}, 9(23):3000, 2021.

\bibitem{chung2017non}
Eric~T Chung, Yalchin Efendiev, Wing~Tat Leung, Maria Vasilyeva, and Yating
  Wang.
\newblock Non-local multi-continua upscaling for flows in heterogeneous
  fractured media.
\newblock {\em Journal of Computational Physics}, 372:22--34, 2018.

\bibitem{Quarteroni2008coupling}
Carlo D'angelo and Alfio Quarteroni.
\newblock On the coupling of 1d and 3d diffusion-reaction equations:
  application to tissue perfusion problems.
\newblock {\em Mathematical Models and Methods in Applied Sciences},
  18(08):1481--1504, 2008.

\bibitem{d2012mixed}
Carlo D'Angelo and Anna Scotti.
\newblock A mixed finite element method for darcy flow in fractured porous
  media with non-matching grids.
\newblock {\em ESAIM: Mathematical Modelling and Numerical Analysis},
  46(2):465--489, 2012.

\bibitem{weinan2007heterogeneous}
Weinan E, Bjorn Engquist, Xiantao Li, Weiqing Ren, and Eric Vanden-Eijnden.
\newblock Heterogeneous multiscale methods: a review.
\newblock {\em Commun. Comput. Phys}, 2(3):367--450, 2007.

\bibitem{eh09}
Y.~Efendiev and T.~Hou.
\newblock {\em {Multiscale Finite Element Methods: Theory and Applications}},
  volume~4 of {\em Surveys and Tutorials in the Applied Mathematical Sciences}.
\newblock Springer, New York, 2009.

\bibitem{efendiev2013generalized}
Yalchin Efendiev, Juan Galvis, and Thomas~Y Hou.
\newblock Generalized multiscale finite element methods (gmsfem).
\newblock {\em Journal of computational physics}, 251:116--135, 2013.

\bibitem{efendiev2015hierarchical}
Yalchin Efendiev, Seong Lee, Guanglian Li, Jun Yao, and Na~Zhang.
\newblock Hierarchical multiscale modeling for flows in fractured media using
  generalized multiscale finite element method.
\newblock {\em GEM-International Journal on Geomathematics}, 6(2):141--162,
  2015.

\bibitem{efendiev2022efficient}
Yalchin Efendiev, Wing~Tat Leung, Guang Lin, and Zecheng Zhang.
\newblock Efficient hybrid explicit-implicit learning for multiscale problems.
\newblock {\em Journal of Computational Physics}, page 111326, 2022.

\bibitem{efendiev2021temporal}
Yalchin Efendiev, Sai-Mang Pun, and Petr~N Vabishchevich.
\newblock Temporal splitting algorithms for non-stationary multiscale problems.
\newblock {\em Journal of Computational Physics}, 439:110375, 2021.

\bibitem{efendiev2021splitting}
Yalchin Efendiev and Petr~N Vabishchevich.
\newblock Splitting methods for solution decomposition in nonstationary
  problems.
\newblock {\em Applied Mathematics and Computation}, 397:125785, 2021.

\bibitem{formaggia2014reduced}
Luca Formaggia, Alessio Fumagalli, Anna Scotti, and Paolo Ruffo.
\newblock A reduced model for darcy's problem in networks of fractures.
\newblock {\em ESAIM: Mathematical Modelling and Numerical Analysis},
  48(4):1089--1116, 2014.

\bibitem{gaspar2014explicit}
Francisco Gaspar, Alexander Grigoriev, and Petr Vabishchevich.
\newblock Explicit-implicit splitting schemes for some systems of evolutionary
  equations.
\newblock {\em International Journal of Numerical Analysis \& Modeling}, 11(2),
  2014.

\bibitem{golub2013matrix}
Gene~H Golub and Charles~F Van~Loan.
\newblock {\em Matrix computations}.
\newblock JHU press, 2013.

\bibitem{hkj12}
H.~Hajibeygi, D.~Kavounis, and P.~Jenny.
\newblock A hierarchical fracture model for the iterative multiscale finite
  volume method.
\newblock {\em Journal of Computational Physics}, 230(24):8729--8743, 2011.

\bibitem{hajibeygi2008iterative}
Hadi Hajibeygi, Giuseppe Bonfigli, Marc~Andre Hesse, and Patrick Jenny.
\newblock Iterative multiscale finite-volume method.
\newblock {\em Journal of Computational Physics}, 227(19):8604--8621, 2008.

\bibitem{hajibeygi2011hierarchical}
Hadi Hajibeygi, Dimitris Karvounis, and Patrick Jenny.
\newblock A hierarchical fracture model for the iterative multiscale finite
  volume method.
\newblock {\em Journal of Computational Physics}, 230(24):8729--8743, 2011.

\bibitem{horn2012matrix}
Roger~A Horn and Charles~R Johnson.
\newblock {\em Matrix analysis}.
\newblock Cambridge university press, 2012.

\bibitem{hoteit2008efficient}
Hussein Hoteit and Abbas Firoozabadi.
\newblock An efficient numerical model for incompressible two-phase flow in
  fractured media.
\newblock {\em Advances in Water Resources}, 31(6):891--905, 2008.

\bibitem{houwu97}
T.~Hou and X.H. Wu.
\newblock A multiscale finite element method for elliptic problems in composite
  materials and porous media.
\newblock {\em J. Comput. Phys.}, 134:169--189, 1997.

\bibitem{jenny2005adaptive}
Patrick Jenny, Seong~H Lee, and Hamdi~A Tchelepi.
\newblock Adaptive multiscale finite-volume method for multiphase flow and
  transport in porous media.
\newblock {\em Multiscale Modeling \& Simulation}, 3(1):50--64, 2005.

\bibitem{jenny2003multi}
Patrick Jenny, SH~Lee, and Hamdi~A Tchelepi.
\newblock Multi-scale finite-volume method for elliptic problems in subsurface
  flow simulation.
\newblock {\em Journal of computational physics}, 187(1):47--67, 2003.

\bibitem{karimi2001numerical}
Mohammad Karimi-Fard, Abbas Firoozabadi, et~al.
\newblock Numerical simulation of water injection in 2d fractured media using
  discrete-fracture model.
\newblock In {\em SPE annual technical conference and exhibition}. Society of
  Petroleum Engineers, 2001.

\bibitem{kazemi1969pressure}
Hossein Kazemi.
\newblock Pressure transient analysis of naturally fractured reservoirs with
  uniform fracture distribution.
\newblock {\em Society of petroleum engineers Journal}, 9(04):451--462, 1969.

\bibitem{keyes2013multiphysics}
David~E Keyes, Lois~C McInnes, Carol Woodward, William Gropp, Eric Myra,
  Michael Pernice, John Bell, Jed Brown, Alain Clo, Jeffrey Connors, et~al.
\newblock Multiphysics simulations: Challenges and opportunities.
\newblock {\em The International Journal of High Performance Computing
  Applications}, 27(1):4--83, 2013.

\bibitem{kolesov2014splitting}
Alexandr Kolesov, Petr Vabishchevich, and Maria Vasilyeva.
\newblock Splitting schemes for poroelasticity and thermoelasticity problems.
\newblock {\em Computers \& Mathematics with Applications}, 67(12):2185--2198,
  2014.

\bibitem{leung2022multirate}
Wing~Tat Leung and Yating Wang.
\newblock Multirate partially explicit scheme for multiscale flow problems.
\newblock {\em SIAM Journal on Scientific Computing}, 44(3):A1775--A1806, 2022.

\bibitem{lunati2006multiscale}
Ivan Lunati and Patrick Jenny.
\newblock Multiscale finite-volume method for compressible multiphase flow in
  porous media.
\newblock {\em Journal of Computational Physics}, 216(2):616--636, 2006.

\bibitem{martin2005modeling}
Vincent Martin, J{\'e}r{\^o}me Jaffr{\'e}, and Jean~E Roberts.
\newblock Modeling fractures and barriers as interfaces for flow in porous
  media.
\newblock {\em SIAM Journal on Scientific Computing}, 26(5):1667--1691, 2005.

\bibitem{pruess1985practical}
Karsten Pruess and TN~Narasimhan.
\newblock A practical method for modeling fluid and heat flow in fractured
  porous media.
\newblock {\em Society of Petroleum Engineers Journal}, 25(01):14--26, 1985.

\bibitem{ruiz2016mixed}
Ricardo Ruiz-Baier and Ivan Lunati.
\newblock Mixed finite element--discontinuous finite volume element
  discretization of a general class of multicontinuum models.
\newblock {\em Journal of Computational Physics}, 322:666--688, 2016.

\bibitem{samarskii2002additive}
Alexander Samarskii and Petr Vabishchevich.
\newblock Additive schemes for systems of time-dependent equations of
  mathematical physics.
\newblock In {\em International Conference on Numerical Methods and
  Applications}, pages 48--60. Springer, 2002.

\bibitem{samarskii2001theory}
Alexander~A Samarskii.
\newblock {\em The theory of difference schemes}, volume 240.
\newblock CRC Press, 2001.

\bibitem{schwenck2015dimensionally}
Nicolas Schwenck, Bernd Flemisch, Rainer Helmig, and Barbara~I Wohlmuth.
\newblock Dimensionally reduced flow models in fractured porous media:
  crossings and boundaries.
\newblock {\em Computational Geosciences}, 19(6):1219--1230, 2015.

\bibitem{shah2016multiscale}
Swej Shah, Olav M{\o}yner, Matei Tene, Knut-Andreas Lie, and Hadi Hajibeygi.
\newblock The multiscale restriction smoothed basis method for fractured porous
  media (f-msrsb).
\newblock {\em Journal of Computational Physics}, 318:36--57, 2016.

\bibitem{southworth2023implicit}
Ben~S Southworth, Ryosuke Park, Svetlana Tokareva, and Marc Charest.
\newblock Implicit-explicit runge-kutta for radiation hydrodynamics i: gray
  diffusion.
\newblock {\em arXiv preprint arXiv:2305.05452}, 2023.

\bibitem{spiridonov2020generalized}
Denis Spiridonov, Maria Vasilyeva, and Eric~T Chung.
\newblock Generalized multiscale finite element method for multicontinua
  unsaturated flow problems in fractured porous media.
\newblock {\em Journal of Computational and Applied Mathematics}, 370:112594,
  2020.

\bibitem{steefel2018approaches}
Carl~I Steefel and Kerry~TB MacQuarrie.
\newblock Approaches to modeling of reactive transport in porous media.
\newblock {\em Reactive transport in porous media}, pages 83--130, 2018.

\bibitem{tene2016multiscale}
M~Tene, MS~Al~Kobaisi, and H~Hajibeygi.
\newblock Multiscale projection-based embedded discrete fracture modeling
  approach (f-ams-pedfm).
\newblock In {\em ECMOR XV-15th European Conference on the Mathematics of Oil
  Recovery}, 2016.

\bibitem{ctene2016algebraic}
Matei {\c{T}}ene, Mohammed~Saad Al~Kobaisi, and Hadi Hajibeygi.
\newblock Algebraic multiscale method for flow in heterogeneous porous media
  with embedded discrete fractures (f-ams).
\newblock {\em Journal of Computational Physics}, 321:819--845, 2016.

\bibitem{tyrylgin2020generalized}
Aleksei Tyrylgin, Maria Vasilyeva, Denis Spiridonov, and Eric~T Chung.
\newblock Generalized multiscale finite element method for the poroelasticity
  problem in multicontinuum media.
\newblock {\em Journal of Computational and Applied Mathematics}, 374:112783,
  2020.

\bibitem{vabishchevich2012explicit}
Petr Vabishchevich and Maria Vasilyeva.
\newblock Explicit-implicit schemes for convection-diffusion-reaction problems.
\newblock {\em Numerical Analysis and Applications}, 5:297--306, 2012.

\bibitem{vabishchevich2013additive}
Petr~N Vabishchevich.
\newblock Additive operator-difference schemes.
\newblock In {\em Additive Operator-Difference Schemes}. de Gruyter, 2013.

\bibitem{vabishchevich2020explicit}
PN~Vabishchevich.
\newblock Explicit--implicit schemes for first-order evolution equations.
\newblock {\em Differential Equations}, 56:882--889, 2020.

\bibitem{vasilyeva2023efficient}
Maria Vasilyeva.
\newblock Efficient decoupling schemes for multiscale multicontinuum problems
  in fractured porous media.
\newblock {\em Journal of Computational Physics}, 487:112134, 2023.

\bibitem{vasilyeva2019upscaling}
Maria Vasilyeva, Masoud Babaei, Eric~T Chung, and Valentin Alekseev.
\newblock Upscaling of the single-phase flow and heat transport in fractured
  geothermal reservoirs using nonlocal multicontinuum method.
\newblock {\em Computational Geosciences}, 23:745--759, 2019.

\bibitem{vasilyeva2019multiscale}
Maria Vasilyeva, Masoud Babaei, Eric~T Chung, and Denis Spiridonov.
\newblock Multiscale modeling of heat and mass transfer in fractured media for
  enhanced geothermal systems applications.
\newblock {\em Applied Mathematical Modelling}, 67:159--178, 2019.

\bibitem{vasilyeva2019nonlocal}
Maria Vasilyeva, Eric~T Chung, Siu~Wun Cheung, Yating Wang, and Georgy
  Prokopev.
\newblock Nonlocal multicontinua upscaling for multicontinua flow problems in
  fractured porous media.
\newblock {\em Journal of Computational and Applied Mathematics}, 355:258--267,
  2019.

\bibitem{vasilyeva2019constrained}
Maria Vasilyeva, Eric~T Chung, Yalchin Efendiev, and Jihoon Kim.
\newblock Constrained energy minimization based upscaling for coupled flow and
  mechanics.
\newblock {\em Journal of Computational Physics}, 376:660--674, 2019.

\bibitem{vasilyeva2023uncoupling}
Maria Vasilyeva, Sergei Stepanov, Alexey Sadovski, and Stephen Henry.
\newblock Uncoupling techniques for multispecies diffusion--reaction model.
\newblock {\em Computation}, 11(8):153, 2023.

\bibitem{warren1963behavior}
JE~Warren, P~Jj Root, et~al.
\newblock The behavior of naturally fractured reservoirs.
\newblock {\em Society of Petroleum Engineers Journal}, 3(03):245--255, 1963.

\bibitem{wu2011multiple}
Yu-Shu Wu, Yuan Di, Zhijiang Kang, and Perapon Fakcharoenphol.
\newblock A multiple-continuum model for simulating single-phase and multiphase
  flow in naturally fractured vuggy reservoirs.
\newblock {\em Journal of Petroleum Science and Engineering}, 78(1):13--22,
  2011.

\bibitem{wu2007triple}
Yu-Shu Wu, Christine Ehlig-Economides, Guan Qin, Zhijang Kang, Wangming Zhang,
  Babatunde Ajayi, and Qingfeng Tao.
\newblock A triple-continuum pressure-transient model for a naturally fractured
  vuggy reservoir.
\newblock 2007.

\bibitem{yao2010discrete}
Jun Yao, Zhaoqin Huang, Yajun Li, Chenchen Wang, Xinrui Lv, et~al.
\newblock Discrete fracture-vug network model for modeling fluid flow in
  fractured vuggy porous media.
\newblock In {\em International oil and gas conference and exhibition in
  China}. Society of Petroleum Engineers, 2010.

\end{thebibliography}

\end{document}